\documentclass[12pt]{amsart}

\setlength{\hoffset}{-0.4mm} \setlength{\voffset}{-0.4mm}
\setlength{\textwidth}{155mm} \setlength{\textheight}{220mm}
\setlength{\topmargin}{0mm} \setlength{\oddsidemargin}{0mm}
\setlength{\evensidemargin}{0mm} \setlength\arraycolsep{2pt}
\setlength{\headsep}{7mm} \setlength{\headheight}{5mm}

\usepackage{amsmath,amssymb}
\usepackage{graphicx}
\usepackage{amsthm}
\usepackage[colorlinks=true]{hyperref}
\numberwithin{equation}{section}
\newcommand{\R}{\mathbb{R}}
\newcommand{\T}{\mathbb{T}}
\newcommand{\N}{\mathbb{N}}
\newcommand{\Z}{\mathbb{Z}}
\newcommand{\C}{\mathbb{C}}

\newcommand{\bS}{\mathbb{S}}

\newcommand{\cF}{\mathcal{F}}
\newcommand{\sym}{{\rm Symp}_c}

\newcommand{\beqnn}{\begin{eqnarray*}}
\newcommand{\eeqnn}{\end{eqnarray*}}
\newcommand{\beqn}{\begin{eqnarray}}
\newcommand{\eeqn}{\end{eqnarray}}
\newcommand{\beq}{\begin{equation}}
\newcommand{\eeq}{\end{equation}}

\theoremstyle{plain}
\theoremstyle{definition}
\newtheorem{thm}{Theorem}[section]
\newtheorem{prop}[thm]{Proposition}
\newtheorem{conj}[thm]{Conjecture}
\newtheorem{lem}[thm]{Lemma}

\newtheorem{rmk}[thm]{Remark}
\newtheorem{defi}{Definition}[section]
\newtheorem{exm}[defi]{Example}
\def\rk{\mathrm{rank}}

 \def\HF{\mathrm{HF}}
 \def\CF{\mathrm{CF}}
\begin{document}

\title{Dynamics of composite symplectic Dehn twists}

\author{Wenmin Gong, Zhijing Wendy Wang, Jinxin Xue}
\address{School of Mathematical Sciences\\ Beijing Normal University\\ Beijing, China}
\email{wmgong@bnu.edu.cn}

\address{Department of Mathematics\\ University of Chicago\\ Chicago, United States of America}
\email{zhijingw@uchicago.edu}

\address{New Cornerstone Sciences Laboratory, Department of Mathematics, \\ Tsinghua University\\ Beijing, China}
\email{jxue@tsinghua.edu.cn}

\begin{abstract}
	This paper appears as the confluence of hyperbolic dynamics, symplectic topology and low dimensional topology, etc. We show that  composite symplectic Dehn twists have certain form of nonuniform hyperbolicity: it has positive topological entropy as well as two families of local stable and unstable Lagrangian manifolds, which are analogous to signatures of pseudo{-}Anosov mapping classes. Moreover, we show that the rank of the Floer cohomology group of these compositions grows exponentially under iterations, and provide a classification of the symplectic mapping class group of the $A^2_m$ configuration, which partially answers a question of Smith concerning the classification of symplectic mapping class group in higher dimensions. Finally, we propose a conjecture on the positive metric entropy of our model and point out its relationship with the standard map.
\end{abstract}

\maketitle
 {\noindent \textbf{Mathematics Subject Classification:} 53D40; 37D25; 37E40.}

 \section{Introduction}

In this paper, we study the dynamical behaviors of elements in the symplectic mapping class group of a symplectic manifold.

When the symplectic manifold is a surface $S$, the classical Nielsen-Thurston theory provides valuable insights into the dynamics of mapping classes: every automorphism $f$ of $S$ is homotopic to a homeomorphism $\phi$ that satisfies one of the following,
periodic ($\phi^k=\mathrm{id}$ for some $k\in \N$), reducible (there exists a closed loop $\gamma\subset S$ preserved by $\phi$),  {or} pseudo-Anosov. The  { pseudo-Anosov case} is characterized by two significant properties, among others:
\begin{enumerate}
	\item[(a)] Existence of two singular transverse foliations invariant under a representative $\phi$;
	\item[(b)] Expansion/contraction along the leaves, measured by positive topological entropy.
\end{enumerate}
Pseudo-Anosov mapping classes are of particular interest due to their close connection to Anosov systems, which are central objects studied in the field of hyperbolic dynamical systems. Furthermore, pseudo-Anosov mapping classes constitute the majority of classes within the mapping class group.

To gain a clearer understanding of these concepts, let us consider the classification on the two dimensional torus. We examine the linear automorphisms in $\mathrm{PSL}_2\Z=\mathrm{SL}_2\Z/\{\pm\mathrm{id}\}$ that act on the torus $\R^2/\Z^2$. An automorphism $f$ in $\mathrm{PSL}_2\Z$ can be classified into one of the following three types:
\begin{enumerate}
	\item  $\mathrm{tr}(f) < 2$: In this case, $f$ is periodic, and there exists some $k \in \N$ such that $f^k=\mathrm{id}$.
	\item  $\mathrm{tr}(f) = 2$: Here, $f$ is reducible, i.e. it fixes a closed curve and exhibits linear growth of geometric intersection numbers, i.e. $I( [\gamma_1],f_*^n[\gamma_2])$ grows at most linearly, for any two simple closed curves $\gamma_1,\gamma_2$ where $I$ is the geometric intersection number.
 \item $\mathrm{tr}(f) > 2$: This type corresponds to Anosov automorphisms. In this case, $f$ expands along one eigen-direction while contracts along the other eigen-direction, which form the stable and unstable foliations of the automorphism. It also exhibits exponential growth of geometric intersection numbers, i.e. $I( [\gamma_1],f_*^n[\gamma_2])$ grows exponentially, for any two simple closed curves $\gamma_1,\gamma_2$ with nontrivial homology classes. 
\end{enumerate}

In this paper, we aim to generalize this phenomenon to higher dimensional symplectic manifolds. In particular, we consider $2n$-dimensional symplectic manifolds $A^n_m$, where $$A^n_m=\{x_1^2+x_2^2+\ldots +x_n^2=x_{n+1}^{m+1}+\frac{1}{2}\}\subset (\C^{n+1},\omega_0),$$ which can be obtained by plumbing $m$ copies of $T^*\mathbb{S}^n$, see~\cite{Ab,AS}. In the manifold $A^n_m$, there exist $m$ embedded Lagrangian spheres $S_i, 1\le i\le m$ that transversely intersect with one another, such that $|S_i\cap S_j|=1$ if $|i-j|=1$ and $|S_i\cap S_j|=0$ if $|i-j|>1$. We shall explain the $A^n_m$ manifold in more details in Example \ref{Am}.

Our first main result gives a dynamical classification of the symplectic mapping class group in terms of Lagrangian Floer cohomology. This provides the first example answering Smith's question in~\cite[Section 3.2.2]{Sm}, which asks for classification of the symplectic mapping class group in higher dimensions, analogous to the classification of the mapping class group of surfaces.

\begin{defi}
A \emph{Lagrangian isotopy} on a symplectic manifold $(M,\omega)$ is a smooth family of  {Lagrangian embeddings} $\Phi:L\times [0,1]\to M$.
\end{defi}

We obtain the following classification of symplectomorphisms on the $A^2_m$ configuration.

\begin{thm}\label{thm:class}
For each symplectic mapping class  $[f] \in \pi_0(\sym (A^2_m, \omega))$, any representative $f$ satisfies one of the following:
    \begin{itemize}
     \item $f$ is reducible:  {there exists a Lagrangian submanifold $L$, which is an embedded sphere or torus, such that some power $f^k,k\neq 0$ preserves the Lagrangian isotopy class of $L$, i.e. $f^k(L)$} is Lagrangian isotopic to $L$;
        \item $f$ is periodic: for any Lagrangian spheres $\alpha,\beta\subseteq A^2_m$, the sequence $${\rm rank}\;\HF^*(\alpha,f^n\beta)$$is periodic in $n$;
        \item $f$ is hyperbolic: for any Lagrangian spheres $\alpha,\beta\subseteq A^2_m$, the sequence $${\rm rank}\;\HF^*(\alpha,f^n\beta)$$ grows exponentially in $n$.
    \end{itemize}

   {In the case that $m=2$, there is a natural homomorphism $\rho: \pi_0(\sym (A^2_m, \omega))\simeq \mathrm{Br}_2\to \mathrm{PSL}_2\Z$, generated by $\rho(a)=\begin{pmatrix}
      1&1\\0&1
  \end{pmatrix}$ and $\rho(b)=\begin{pmatrix}
      1&0\\-1&1
  \end{pmatrix}$ for the two generators $a,b$ of $\mathrm{Br}_2$, corresponding to the symplectic Dehn twists around the two Lagrangian spheres. Any element $[f]\in \pi_0(\sym (A^2_m, \omega))$ is reducible/periodic/hyperbolic if and only if $\rho([f])$ is reducible/periodic/Anosov.}
\end{thm}

  More generally,  it is known by  Evans~\cite[{Theorem~4}]{Eva} and Weiwei Wu \cite[Theorem~1.3]{wu} that the symplectic mapping class group of $A^2_m$ is isomorphic to the braid group $\mathrm{Br}_m$, thus inherits a classification into periodic, reducible and pseudo Anosov mapping classes from the latter. However, such an isomorphism does not automatically give the Lagrangian isotopy or the growth of Floer cohomology groups under iteration of symplectomorphims as stated in the above theorem.

A classical example of a hyperbolic automorphism is the Arnold's cat map:
$f=\begin{pmatrix} 2&1\\1&1\end{pmatrix}$
on the two-torus.  We observe that it can be expressed as a composition of upper and lower triangular matrices:
$$\begin{pmatrix} 1&1\\0&1\end{pmatrix}\quad \text{and} \quad \begin{pmatrix} 1&0\\1&1\end{pmatrix},$$ 
which are instances of Dehn twists on the torus. Dehn twists involve cutting the surface along a closed curve, twisting a tubular neighborhood of one side of the curve by $2\pi$, and then reassembling the curve.

In general, the composition of Dehn twists is a common method for constructing pseudo-Anosov elements on surfaces. This algorithm can be found in the works of Thurston \cite{Th}, Penner \cite{Pe}, Fathi \cite{fathi}, and others.

It was noted by Arnold \cite{Arnold} that ``the natural generalization of $\mathrm{SL}(2)$ is, from many points of view, not $\mathrm{SL}(n)$ but rather the linear symplectomorphisms group $\mathrm{Sp}(n)$ of $2n$-space, and correspondingly the group $\mathrm{Symp}(M)$." Therefore, it is natural to try and extend the picture to $2n$-dimensional symplectic manifolds with $n>1$. Arnold \cite{Arnold} and Seidel \cite{Se} introduced a generalization of Dehn twists as a symplectomorphism on $T^*\mathbb S^n$. If $L$ is a Lagrangian sphere in a symplectic manifold $M$, the Dehn twist $\tau_L$ with respect to $L$ roughly twists a tubular neighborhood of $L$ in a way that maps the points on $L$ to its antipodal points and transitions to identity outside the neighborhood.
For $n=2$, the square of a symplectic Dehn twist is smoothly but not symplectically isotopic to the identity \cite{Se}.
Keating and Randal-Williams further discussed this in different dimensions, see~\cite{KR}.

We anticipate that by composing symplectic Dehn twists, one can generate symplectic mapping classes that exhibit characteristics of pseudo-Anosov mapping classes. We aim to uncover the hyperbolicity of such elements and establish properties similar to pseudo-Anosov elements mentioned above (see Theorems \ref{2twists}, \ref{mtwist}, \ref{foli} below). Additionally, we will investigate the exponential growth of ranks in Floer cohomology groups associated with these elements (see Theorems \ref{thm:symrate} and \ref{thm:symrate2} below). This also leads to the classification of the symplectic mapping class group of the $A_m^2$ manifold in terms of the growth of Floer cohomology as given in Theorem \ref{thm:class}. 

The study of composite symplectic Dehn twists has gained attention in recent years from the perspective of categorical dynamics. Interested readers can refer to \cite{BK,FFHKL,KO} and the references therein for more information. 

\subsection{Positive topological entropy}
We begin by demonstrating that specific natural representatives of compositions of symplectic Dehn twists exhibit positive topological entropy.

\begin{defi}
We say that a symplectic manifold $M$ \emph{contains an $A^n_m$-configuration}, if $\dim M=2n$ and there exists $m$ embedded Lagrangian spheres $S_i, 1\le i\le m$, that transversely intersect with one another, such that $|S_i\cap S_j|=1$ if $|i-j|=1$ and $|S_i\cap S_j|=0$ if $|i-j|>1$.
\end{defi}
We consider the case of $m=2$ as the higher dimensional generalization of the Arnold cat map. 
\begin{thm}\label{2twists}
Let $M$ be a symplectic manifold containing an $A^n_2$-configuration of Lagrangian spheres $S_1$ and $S_2$ that intersect transversely at a single point. Then there exist symplectic Dehn twists $\tau_1$ and $\tau_2$ of $S_1$ and $S_2$, respectively, such that for any $k, \ell\in\mathbb{Z}$ where $k\ell \neq 0,1,2,3,4$, the composition $\tau=\tau_1^k\tau_2^\ell$ of symplectic Dehn twists has positive topological entropy, i.e., $h_{top}(\tau_1^k\tau_2^\ell)>0$.
\end{thm}

For $m>2$, we have the following theorem.
\begin{thm}\label{mtwist}
 Let $M$ be a symplectic manifold containing an $A^n_m$ configuration and let $S_i, 1\leq i\leq m$, be the $m$ embedded Lagrangian spheres. Then there exist Dehn twists $\tau_i$ on $S_i$ for each $1\leq i\leq m$ such that for any $k_i\in \mathbb{Z}$ satisfying $k_ik_{i+1}<0$ for all $i=1,2,\ldots, m-1$, the topological entropy of $\tau=\tau_1^{k_1}\cdots \tau_m^{k_m}$ is positive, i.e., $$h_{top}(\tau_1^{k_1}\cdots \tau_m^{k_m})>0.$$
\end{thm}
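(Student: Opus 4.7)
The plan is to reduce Theorem \ref{mtwist} to Theorem \ref{2twists} via a localization argument that exploits the disjointness structure of the $A_m$ configuration: each intersection point $p_i = S_i \cap S_{i+1}$ lies on no other sphere, so the Dehn twists on the non-adjacent spheres can be made invisible near $p_i$.

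First I would fix the intersection point $p = S_1 \cap S_2$. Since $p \notin S_j$ for $j \ge 3$, one can choose a Darboux ball $U \ni p$ with $\bar U \cap S_j = \emptyset$ for every $j \ge 3$. Using Weinstein's neighborhood theorem, each $\tau_j$ can be taken to be supported in an arbitrarily thin tubular neighborhood $N_j$ of $S_j$; I would pick the $\tau_j$ once and for all (independent of the $k_i$) so that (i) $\bar U \cap N_j = \emptyset$ for $j \ge 3$, and (ii) $N_1 \cap N_2 \subset U$. Both conditions are met by shrinking the tubular radii. Because $\mathrm{supp}(\tau_j^{k_j}) \subset \mathrm{supp}(\tau_j) \subset N_j$ for every $k_j$, the twists $\tau_j^{k_j}$ with $j \ge 3$ fix every point of $U$, and hence for $x \in U$,
\[
\tau(x) \;=\; \tau_1^{k_1}\tau_2^{k_2}\cdots\tau_m^{k_m}(x) \;=\; \tau_1^{k_1}\tau_2^{k_2}(x).
\]

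Next, applying Theorem \ref{2twists} to the pair $(S_1, S_2)$ with $k_1 k_2 < 0$ provides a compact invariant hyperbolic set (a horseshoe) $\Lambda$ for $\tau_1^{k_1}\tau_2^{k_2}$ with positive topological entropy. Such a $\Lambda$ must lie in $N_1 \cap N_2$: away from this intersection, at most one of the two twists acts nontrivially, which precludes any shift-type hyperbolic dynamics. Hence $\Lambda \subset N_1 \cap N_2 \subset U$, so $\tau$ and $\tau_1^{k_1}\tau_2^{k_2}$ agree on $\Lambda$. Consequently $\Lambda$ is $\tau$-invariant with $\tau^n|_\Lambda = (\tau_1^{k_1}\tau_2^{k_2})^n|_\Lambda$ for all $n \in \Z$, and by monotonicity of topological entropy under restriction to invariant subsets,
\[
h_{top}(\tau) \;\ge\; h_{top}(\tau|_\Lambda) \;=\; h_{top}\bigl(\tau_1^{k_1}\tau_2^{k_2}|_\Lambda\bigr) \;>\; 0.
\]

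The main obstacle will be verifying the locality claim that the horseshoe from Theorem \ref{2twists} can in fact be placed inside $N_1 \cap N_2$. This is expected from the structure of the construction there, where the hyperbolic invariant set arises from the interaction of the two Dehn twists at their unique intersection point, but it will require careful extraction from that proof, possibly combined with a scaling argument via a local Liouville flow to shrink the horseshoe into an arbitrarily small neighborhood of $p$. Once this locality is granted, the reduction to the two-sphere theorem is automatic.
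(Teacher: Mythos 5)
The localization claim you flag as the ``main obstacle'' is in fact false, and it cannot be repaired, so the reduction to Theorem \ref{2twists} does not go through. The invariant hyperbolic set produced by the paper's proof of Theorem \ref{2twists} is not confined to a small ball around $p_1 = S_1\cap S_2$: it is a positive Lebesgue measure subset of the two--dimensional invariant submanifold $X_\epsilon = T^*\gamma_1(\epsilon)\cup_\psi T^*\gamma_2(\epsilon)$, which is an $\epsilon$--thickening of the entire great circles $\gamma_1\subset S_1$ and $\gamma_2\subset S_2$. The hyperbolicity indeed arises from repeated returns to the intersection square, but between returns the orbits traverse the full annuli over $\gamma_1$ and $\gamma_2$, so no Liouville-rescaling can push them into an arbitrarily small Darboux ball $U$. (Concretely, each application of $\tau_1^{k_1}$ or $\tau_2^{k_2}$ moves a typical point by a macroscopic amount along the relevant circle, since $r(t)\approx\pi$ near $t=0$, so the orbit leaves $U$ after one step.)

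There is a second, more structural obstruction. In the $A_m$ plumbing the two plumbing points $p_{i-1}$ and $p_i$ on $S_i$ are antipodal, so every great circle $\gamma_2\subset S_2$ through $p_1$ necessarily also passes through $p_2 = S_2\cap S_3$. Orbits of the linked twist map on $T^*\gamma_2(\epsilon)$ therefore visit a neighborhood of $p_2$, and there $\tau_3^{k_3}$ acts nontrivially and even moves points off of $T^*\gamma_1\cup T^*\gamma_2$ into $T^*\gamma_3$. Thus the two--circle submanifold that underlies Theorem \ref{2twists} is not $\tau$--invariant when $m\ge 3$, and $\tau$ and $\tau_1^{k_1}\tau_2^{k_2}$ genuinely disagree on the hyperbolic set. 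The paper's approach is different: it takes $X_\epsilon = T^*\gamma_1(\epsilon)\cup_{\psi_1}\cdots\cup_{\psi_{m-1}}T^*\gamma_m(\epsilon)$, which \emph{is} invariant under the full composition, identifies the restriction with a ``multi-linked twist map'' on a flat surface of higher genus, and then proves positive Lyapunov exponents directly on that surface via the cone-monotonicity argument of Lemma \ref{lya} together with the recurrence Lemma \ref{fullm}. Your proposal would need to be replaced by an analysis at the level of all $m$ circles simultaneously; restricting attention to two adjacent spheres loses the invariance that the argument requires.
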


The proof of this theorem involves identifying an invariant submanifold on which the restrictions of the symplectic Dehn twists correspond to the usual Dehn twists on surfaces. The restricted dynamics is a generalization of the linked twist map and related to the egg-beater map in the work of \cite{PS}. Positive topological entropy is only one of the many consequences of the presence of the linked twist map.
Additionally, we show that the dynamics on the subsystem exhibits hyperbolic behavior, characterized by positive Lyapunov exponent on a full Lebesgue measure set of the subsystem. Using Pesin's stable manifold theory (Theorem \ref{thm:Pesin}), we further demonstrate the existence of two families of Lagrangian submanifolds invariant under the composite symplectic Dehn twists.

\begin{thm}\label{foli} {Let $M$ be a symplectic manifold that contains an $A^n_m$-configuration, where $S_i$ are the embedded Lagrangian spheres in the configuration. Then there exists symplectic Dehn twists $\tau_i$ on $S_i$ such that for $\tau=\tau_1^{k_1}\cdots \tau_m^{k_m}$ with $k_ik_{i+1}<0$ for each $i=1,2,\ldots, m-1$,}  there exist two families of Lagrangian submanifolds $\cF^s, \cF^u$ on $M$ in a neighborhood $N$ of $\cup S_i$ which are invariant under $\tau$. Furthermore, there is an invariant subset $P$ of $N$ admitting an $\mathrm{SO}(n)$ action that commutes with $\tau$ and preserves each leaf in $\cF^s$ and $\cF^u$, such that
	\begin{enumerate}
		\item 	$P/\mathrm{SO}(n)$ is a two-dimensional manifold with boundary,
		\item  either of $\cF^s/\mathrm{SO}(n)$ and $\cF^u/\mathrm{SO}(n)$ has full Lebesgue measure on $P/\mathrm{SO}(n)$,
		\item  for a.e. $x\in P/\mathrm{SO}(n)$, the leaf $f^s(x)$ $($resp. $f^u(x))$ of $\cF^s/\mathrm{SO}(n)$ $($resp. $\cF^u/\mathrm{SO}(n))$ passing through $x$ is contracted exponentially under iterations of $\tau$ $($resp. $\tau^{-1})$.
	\end{enumerate}
\end{thm}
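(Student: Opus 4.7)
The plan is to realize $\cF^s$ and $\cF^u$ as lifts of invariant curve foliations on a two-dimensional reduced system, obtained via symplectic reduction by an $\mathrm{SO}(n)$-symmetry of $\tau$. The reduced system will be exactly the (multi-)linked twist map used in the proofs of Theorems \ref{2twists} and \ref{mtwist}, so hyperbolicity and stable/unstable foliations on the quotient come from those proofs; the task is then to build a Lagrangian lift.

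At each intersection point $p_i$ of consecutive spheres, I would choose a Darboux chart identifying $S_i$ with the zero section of $T^*\R^n$ and $S_{i+1}$ with the cotangent fibre at the origin, both extended to Lagrangian spheres via Weinstein-type identifications matched to the model Dehn twists. The diagonal action $A\cdot(q,\xi)=(Aq,A\xi)$ of $\mathrm{SO}(n)$ on $T^*\R^n$ is Hamiltonian with moment map $\mu(q,\xi)=q\wedge\xi$. Choosing the generating Hamiltonians of the $\tau_i$ to depend only on the $\mathrm{SO}(n)$-invariants $|q|^2$, $|\xi|^2$, $q\cdot\xi$ in each Weinstein neighborhood makes every $\tau_i$, and hence $\tau$, equivariant on a neighbourhood $P_0$ of the intersection locus under consideration. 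Setting $P=\mu^{-1}(0)\cap P_0$, a generic point of $P$ has the form $(\lambda u,\rho u)$ with $u\in S^{n-1}$, so $\dim P=n+1$ and the $\mathrm{SO}(n)$-orbits are $(n-1)$-dimensional and isotropic. Hence $Q:=P/\mathrm{SO}(n)$ is a two-dimensional manifold with boundary, yielding item (1), and the induced map $\tilde\tau$ on $Q$ is the (multi-)linked twist map identified in the earlier proofs.

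Applying the Pesin theory for piecewise-smooth linked twist maps (see \cite{bur} and the references therein) to $\tilde\tau$ on $Q$ produces a full-measure subset carrying transverse one-dimensional absolutely continuous foliations $f^s,f^u$ whose leaves are contracted exponentially by $\tilde\tau,\tilde\tau^{-1}$. Pulling back along the projection $\pi:P\to Q$, the leaves of $\cF^s:=\pi^{-1}(f^s)$ and $\cF^u:=\pi^{-1}(f^u)$ are $\mathrm{SO}(n)$-invariant and $n$-dimensional. At a generic point $(\lambda u,\rho u)$, the tangent space to a leaf decomposes as the orbit direction $\{(\lambda v,\rho v):v\perp u\}$ plus a radial direction $(\dot\lambda u,\dot\rho u)$; a direct check shows $\sum dq_i\wedge d\xi_i$ vanishes on each pair of such vectors, so the leaves are Lagrangian. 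This is a special instance of the Lagrangian-correspondence property of Marsden--Weinstein reduction at $0$. Items (2) and (3) then transfer from $\tilde\tau$ on $Q$ to $\tau$ on $P$ by $\mathrm{SO}(n)$-equivariance of $\pi$.

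The main obstacle is ensuring that a single $\mathrm{SO}(n)$-symmetry can be arranged to commute with all the $\tau_i$ simultaneously in the $A_m$-configuration, since the rotation groups fixing different intersection points $p_i$ sit differently inside $M$. I would resolve this by localizing: take $P$ close enough to one fixed pair of consecutive spheres that the remaining Dehn twists act as the identity on $P$ (possible by shrinking the Weinstein neighborhoods in the construction of Theorem \ref{mtwist}), so that only the local $\mathrm{SO}(n)$ of that pair is needed. A secondary technical point is the singular stratum $\{q=0\}\cup\{\xi=0\}$ of $\mu^{-1}(0)$, handled by restricting to the regular locus and observing that the Pesin set of $\tilde\tau$ has full measure in the open regular part of $Q$, so the singular stratum does not interfere with items (2) and (3).
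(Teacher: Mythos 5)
Your overall plan matches the paper's: reduce by an $\mathrm{SO}(n)$-symmetry to the two-dimensional (multi-)linked twist map, invoke Pesin theory there, and lift the stable/unstable curves back to Lagrangians. The Marsden--Weinstein framing you propose --- realizing $P$ as $\mu^{-1}(0)$ for the diagonal moment map and lifting Lagrangian curves by the standard Lagrangian correspondence of reduction at $0$ --- is a cleaner conceptual packaging than the paper's explicit rotation maps $\mathcal{R}_\theta$, and it handles the embeddedness of the lifted leaves automatically (the paper instead keeps track of an ``admissibility'' reflection symmetry). Your direct check that the orbit directions plus the radial direction span an isotropic subspace is correct.

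However, your resolution of what you call the main obstacle --- reconciling the $\mathrm{SO}(n)$ actions at different plumbing points --- is a genuine gap. Localizing $P$ to a neighborhood of a single pair of consecutive spheres cannot produce a set invariant under the full composite $\tau = \tau_1^{k_1}\cdots\tau_m^{k_m}$: the linked-twist dynamics on $P_\psi(\gamma_1,\ldots,\gamma_m)$ moves generic points across band intersections, so a point in a localized $P$ will, under iteration, enter the support of the ``other'' twists and invariance fails. Theorem~\ref{foli}(3) moreover requires the Pesin stable leaves, whose exponential contraction comes precisely from the orbit repeatedly revisiting band-intersection regions all over $X_\epsilon$; there is no local version of that estimate. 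The correct fix, which the paper supplies in Lemma~\ref{rotcom}, is to show that the local $\mathrm{SO}(n)$ actions near the various plumbing points are intertwined by the plumbing identifications $\psi_i$ (because the linear model map $J$ swaps base and fiber directions, on which the diagonal $\mathrm{SO}(n)$ acts identically), yielding a single globally defined $\mathrm{SO}(n)$ action on the whole plumbing neighborhood $N$ of $\cup S_i$ that commutes with every $\tau_i$; the rest of your argument then goes through. A minor secondary point: the reduced maps here are genuinely smooth (the twist profile $r$ is $C^\infty$ with flat derivatives at $0$), so you can cite Theorem~\ref{hyp} and the smooth Pesin stable manifold theorem directly rather than appealing to a piecewise-smooth variant.
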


The question of existence of stable/unstable Lagrangian foliations for pseudo-Anosov elements generalizing Nielsen-Thurston theory was initially posed as an open problem in \cite{DHKK}. Our Theorem \ref{foli} presented in this paper offers a non-trivial example of existence of stable and unstable  laminations for symplectic diffeomorphisms. Furthermore, the presence of nonuniformly hyperbolic dynamics yields various standard consequences. For instance, it leads to the existence of horseshoes on $P/\mathrm{SO}(n)$ that are conjugate to Bernoulli shifts. As a result, exponential growth of periodic orbits and other related phenomena such as certain forms of ergodicity can be observed. For further exploration of these topics, interested readers are referred to Chapter S.5 of \cite{Katok}.

\subsection{Exponential growth of the ranks of Floer cohomology groups}
 {Theorems \ref{thm:class}, \ref{2twists}, \ref{mtwist} and \ref{foli}} rely on a specific choice of representative in the symplectic mapping classes, analogous to the Nielsen-Thurston theory. As a result, Theorems~\ref{2twists}--\ref{foli} may not generally remain robust under perturbations. However, due to the symplectic nature of the problem, it is natural and necessary to describe the complexity of the dynamics using symplectic invariants that are insensitive to the choice of representative in the symplectic mapping classes.

In this paper, we consider the rank of Floer cohomology as the invariant that characterizes the dynamics. We primarily consider Lagrangian Floer cohomology group $\HF(S_1, \tau^n S_2)$, where $\tau$ represents the composite symplectic Dehn twists as mentioned earlier. Throughout the paper, unless otherwise specified, we work with Lagrangian Floer cohomology using $\mathbb{Z}/2\mathbb{Z}$ coefficients.

Let us define the relative symplectic growth rate as follows:

\begin{defi}[Relative symplectic growth rate]
Let $(M, \omega)$ be a symplectic manifold, and let $\phi$ be a symplectomorphism of $M$. Consider a pair of connected compact Lagrangian submanifolds $(L_1, L_2)$ in $M$. Assume that the Lagrangian Floer cohomology groups $\HF(L_1, \phi^n(L_2))$ are well defined for all $n \in \mathbb{N}$. We define the symplectic growth rate of $\phi$ relative to the pair of Lagrangians $(L_1, L_2)$ as:

$$\Gamma(\phi, L_1, L_2) = \liminf_{n\to\infty} \frac{\log \mathrm{rank}\; \HF(L_1, \phi^n(L_2))}{n}.$$
\end{defi}

From the works of Arnold \cite{Ar2} and Seidel \cite{Se2}, the relative symplectic growth rate (when defined) has a uniform bound that only depends on $M$ and $\phi$. 

We now proceed to demonstrate that exponential growth can be achieved by composite symplectic Dehn twists, in the symplectic manifolds $A^n_m$. This is summarized in the following theorem:

\begin{thm}\label{thm:symrate}
Let $(M^{2n}, \omega_0)$ be the $A_2^n$-manifold\ with Lagrangian spheres $(S_1, S_2)$. Consider the symplectic Dehn twists $\tau_1$ and $\tau_2$ along the spheres $S_1$ and $S_2$, respectively. Then the following  hold:
\begin{enumerate}
\item For any $k, \ell \in \mathbb{Z}$ with $k\ell \neq 0,1,2,3,4$, we have $\Gamma(\tau_1^k\tau_2^\ell, S_1, S_2) > 0$.
\item For any $k, \ell \in \mathbb{Z}$ with $k\ell \in \{0,1,2,3,4\}$, we have $\Gamma(\tau_1^k\tau_2^\ell, S_1, S_2) = 0$.
\item For $i, j \in {1,2}$ with $i \neq j$, we have $\Gamma(\tau_i^2, S_j, S_j) = 0$.
\end{enumerate}
\end{thm}
While the symplectic growth rate is zero in case (3) of the theorem, it can be shown that the rank of $\HF(S_j, \tau_i^{2n}(S_j))$ grows with a polynomial rate (see Section \ref{subsec:proof}). A similar computation of the rank of Lagrangian Floer homology reveals that the ``slow volume growth behavior" of the symplectic Dehn twist on the cotangent bundle $T^*\mathbb{S}^d$ of the $d$-sphere is positive (see \cite{FS}). 

We extend the results to the more general $A_m^n$-type configuration as follows:

\begin{thm}\label{thm:symrate2}
Consider the $m$ embedded Lagrangian spheres $S_i, 1\leq i\leq m,$ in an $A_m^n$-configuration of Lagrangian $n$-spheres. Let $\tau_i$, $i=1,\ldots,m$, be the symplectic Dehn twists along the $n$-spheres $S_i$ respectively. For $\tau=\tau_1^{k_1}\cdots\tau_m^{k_m}$, where each $k_i\in\mathbb{Z}\setminus{0}$ and $k_{i}\cdot k_{i+1}<0$ for all $i=1,\ldots,m-1$, we have $\Gamma(\tau,S_k,S_\ell)>0$ for all $k,\ell\in{1,\ldots,m}$ with $k\neq \ell$.
\end{thm}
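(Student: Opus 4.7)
The plan is to iterate Seidel's long exact sequence along the word $\tau=\tau_1^{k_1}\cdots\tau_m^{k_m}$ to produce a non-negative matrix recurrence for the Floer ranks, and then show that its spectral radius strictly exceeds $1$. This extends the $A_2$ argument behind Theorem~\ref{thm:symrate} to arbitrary $A_m$ configurations. For any Lagrangians $L,L'$ and any sphere $S_c$, the Seidel exact triangle
\[
\HF(S_c,L)\otimes\HF(L',S_c)\longrightarrow \HF(L',L)\longrightarrow \HF(L',\tau_{S_c}L)\xrightarrow{[1]}
\]
yields the Morse-type identity of Poincar\'e polynomials
\[
p_{L',S_c}(t)\,p_{S_c,L}(t)+p_{L',\tau_{S_c}L}(t)=p_{L',L}(t)+(1+t)Q(t),\qquad Q\in\mathbb{Z}_{\ge 0}[t^{\pm 1}],
\]
with an analogous identity for $\tau_{S_c}^{-1}$.

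Fix $\ell$ and set $\mathbf{p}^N\in\mathbb{Z}_{\ge 0}^m$ with entries $p_k^N:=\mathrm{rank}\,\HF(S_k,\tau^N S_\ell)$. Using the $A_m^n$ intersection data $\mathrm{rank}\,\HF(S_i,S_j)=2,1,0$ according as $i=j$, $|i-j|=1$, or $|i-j|\ge 2$, each letter $\tau_{S_i}^{\pm 1}$ contributes an elementary non-negative matrix $T_i=I+c_ie_i^{\top}$, where $c_i$ is the $i$-th column of the intersection matrix. These assemble into a single integer transition matrix $T=T_1^{|k_1|}\cdots T_m^{|k_m|}$ satisfying $\mathbf{p}^{N+1}\ge T\,\mathbf{p}^N$ entrywise, in analogy with the triangular-matrix decomposition of Arnold's cat map recalled in the introduction.

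Next I would prove $\rho(T)>1$ by Perron--Frobenius. The hypotheses $k_i\ne 0$ and $k_ik_{i+1}<0$ force $T$ to be primitive on the $A_m$ Dynkin graph: some power of $T$ has strictly positive entries coupling every pair $(S_k,S_\ell)$ with $k\ne\ell$. An explicit comparison with a $2\times 2$ subword $\tau_i^{k_i}\tau_{i+1}^{k_{i+1}}$, whose associated matrix has spectral radius $>1$ by the proof of Theorem~\ref{thm:symrate} under $k_ik_{i+1}<0$, forces the dominant eigenvalue of $T$ to strictly exceed $1$. Reading off the $(k,\ell)$-entry of $T^N$ then yields $\mathrm{rank}\,\HF(S_k,\tau^N S_\ell)\ge c\,\rho(T)^N$, and hence $\Gamma(\tau,S_k,S_\ell)\ge\log\rho(T)>0$.

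The main obstacle is securing the entrywise lower bound $\mathbf{p}^{N+1}\ge T\,\mathbf{p}^N$: the Morse identity a priori allows the correction $(1+t)Q(t)$ to absorb most of the quadratic term $p_{L',S_c}\,p_{S_c,L}$, leaving a small rank for $\HF(L',\tau_{S_c}L)$. This is precisely the mechanism responsible for the exceptional cases $k\ell\in\{0,1,2,3,4\}$ in Theorem~\ref{thm:symrate}(2,3). Under the alternating-sign hypothesis $k_ik_{i+1}<0$, however, successive twists act along transverse roots of the $A_m$ lattice; combined with a degree-by-degree analysis of the long exact sequence (exploiting that $\HF(S_i,S_{i\pm 1})$ concentrates in a single internal degree while $\HF(S_i,S_i)$ sits in degrees $0$ and $n$ only), the internal gradings of the two sides of the Morse identity should not overlap sufficiently to support large cancellations. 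Making this degree-counting rigorous, and thereby converting the upper-bound matrix recurrence into a matching lower-bound one, is the technical heart of the proof.
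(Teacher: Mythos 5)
Your proposal takes a genuinely different route from the paper's. The paper reduces the computation of $\rk\HF(S_i,\tau^nS_j)$ to a geometric intersection number $i([\gamma_i],[\tau^n(\gamma_j)])$ on a flat surface $Y\subset S$ of genus $g$, using the Khovanov--Seidel equivariant localization theorem (Theorem~\ref{thm:KS}) together with the rotation-map machinery of Section~\ref{SFoliation}. Once the problem is transplanted to surface topology, it is closed in two steps that are entirely classical: Penner's criterion (Theorem~\ref{thm:PA}) guarantees that the alternating-sign word $T_1^{k_1}\cdots T_m^{k_m}$ is pseudo-Anosov, and the Fathi--Laudenbach--Po\'enaru result (Theorem~\ref{thm:anos}) gives the exponential growth $i([\beta_i],[\mathcal{T}^n(\beta_j)])\sim\lambda^n$. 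By contrast, you stay inside Floer theory and try to run the Seidel exact triangle as a matrix recurrence on rank vectors, in the spirit of the categorical entropy literature (cf.\ the references \cite{BS,BK,KO} in the paper). That is an interesting alternative, but as written it has a genuine gap.

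The gap is the one you flag yourself, and it is not small. The exact triangle yields, after evaluating Poincar\'e polynomials at $t=1$, only the two-sided estimate
\[
\bigl|\rk\HF(L',L)-\rk\HF(S_c,L)\cdot\rk\HF(L',S_c)\bigr|\;\le\;\rk\HF(L',\tau_{S_c}L)\;\le\;\rk\HF(L',L)+\rk\HF(S_c,L)\cdot\rk\HF(L',S_c),
\]
so the only \emph{guaranteed} bound on $\rk\HF(L',\tau_{S_c}L)$ from below is an absolute value of a difference, not the non-negative-matrix inequality $\mathbf{p}^{N+1}\ge T\,\mathbf{p}^N$ that your Perron--Frobenius step requires. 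The $(1+t)Q(t)$ correction can in principle cancel essentially all of the quadratic term: Theorem~\ref{thm:symrate}(2)--(3) is precisely the manifestation of such cancellation for $k\ell\in\{0,1,2,3,4\}$, and nothing in the $A_m$ combinatorics a~priori forbids it for longer words. Your proposed fix---a degree-by-degree analysis exploiting that $\HF(S_i,S_{i\pm1})$ is concentrated in a single degree while $\HF(S_i,S_i)$ sits in degrees $0$ and $n$---is not carried out, and it is doubtful it can be, because after several iterations the groups $\HF(S_k,\tau^NS_\ell)$ spread across many degrees and the grading separation you want degrades. (One should also note that the relevant $\mathbb{Z}$-grading on $\HF$ in the $A_m^n$ Milnor fibre needs a grading datum, and the precise degree shift under $\tau_{S_i}$ must be tracked.) The paper sidesteps all of this: after the $\iota$-equivariant reduction the correction terms literally vanish, because the relevant Floer complex has no differential (condition (C2) of Theorem~\ref{thm:KS}, enforced geometrically by Lemma~\ref{lem:bigon} and Lemma~\ref{elimbigon} through putting the reduced curves in minimal position). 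So while your approach is attractive because it avoids the equivariant hypothesis, the lower-bound recurrence is exactly the step that the Khovanov--Seidel reduction is designed to trivialize, and as it stands your argument does not establish it.
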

These results are proved by combining the techniques developed in \cite{KS} and the classical theory of mapping class groups of surfaces. The last theorem can be considered as an analogue of Theorem 3.1 in \cite{Pe}. An important innovation in the proof is the reduction of the calculation of the rank of Floer homology group to the intersection number on surfaces (see Lemma \ref{multiintersect}), which can be considered as a higher dimensional generalization of the corresponding result in \cite{DHKK}. 


\subsection{Speculations on the complexity of the dynamics of composite Dehn twists}
It is important to note that the study presented so far does not uncover the full complexity of the dynamics of composite symplectic Dehn twists. In this section, we aim to discuss the richness of the dynamics of composite symplectic Dehn twists.

In the field of hyperbolic dynamics, a significant indicator of the complexity of a differentiable dynamical system is the presence of positive metric entropy with respect to the volume measure. This is equivalent to having a positive Lyapunov exponent on a set of positive measure, where the Lyapunov exponent is defined as $\lambda(x) := \limsup_{n\to \infty} \frac{\log |Df^n(x)|}{n}$ for a map $f: M\to M$ and $x\in M$. The existence of positive metric entropy allows for strong conclusions regarding ergodicity. However, establishing positive metric entropy for a given system is generally a highly challenging task.

A prominent example in this context is the ``standard map" $f: \mathbb{T}^2 \to \mathbb{T}^2$ defined by $(x,y) \mapsto (x+y+\epsilon \sin(x), y+\epsilon \sin(x))$. It is conjectured that this map exhibits positive metric entropy for the Lebesgue measure, for all $\epsilon > 0$. To help topologists appreciate this conjecture, we can reframe it as follows: For a map with an Anosov mapping class in $\mathrm{SL}_2(\mathbb{Z})$ such as $\left(\begin{array}{cc}
2&1\\1&1\end{array}\right)$, as well as its small perturbations, it is relatively straightforward to establish positive metric entropy. However, for a reducible element such as  $\left(\begin{array}{cc}
1&1\\0&1\end{array}\right)$, the metric entropy is zero. The standard map corresponds to an element with the mapping class $\left(\begin{array}{cc}
1&1\\0&1\end{array}\right)$, and we would like to show that the complicated dynamical behaviors appear naturally by perturbing $\left(\begin{array}{cc}
1&1\\0&1
\end{array}\right)$. However, proving this conjecture has proven to be an extremely difficult task.
From the same perspective, it is known that for $\dim(M)=4$ or $12$ the compositions of Dehn twists $\tau_1^k\tau_2^\ell$, where $k,l\in 2\mathbb{Z}$, are isotopic to the identity but symplectically nontrivial \cite{Se}. This implies that we obtain diffeomorphisms with trivial mapping class but intricate dynamics.

Motivated by the analogy with the standard map conjecture, we put forward the following conjecture: the measure-theoretic entropy of the composition $\tau_1^k\tau_2^\ell$ as described in Theorem \ref{2twists} is positive.

\begin{conj}\label{mec}
Let $S_1$ and $S_2$ be two Lagrangian $n$-spheres in an $A_2^n$-configuration, intersecting transversely at a single point. Then, there exist symplectic Dehn twists $\tau_1$ and $\tau_2$ of $S_1$ and $S_2$ respectively, such that for any $k,\ell\in\mathbb{Z}$ with $k\ell < 0$, the measure-theoretic entropy of the composition $\tau_1^k\tau_2^\ell$ of symplectic Dehn twists with respect to the Lebesgue measure $vol$ is positive, i.e., $h_{vol}(\tau_1^k\tau_2^\ell)>0$.
\end{conj}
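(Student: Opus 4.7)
The plan is to promote the positive topological entropy of Theorem \ref{2twists} to positive metric entropy with respect to the volume measure. By the Pesin entropy formula (Theorem \ref{pef}), since $\tau$ preserves the symplectic volume, it suffices to prove that the Lyapunov exponent $\lambda(x)$ is strictly positive on a subset of positive Lebesgue measure in a full neighbourhood of $S_1\cup S_2$, rather than merely on the lower-dimensional invariant set $P$ produced by Theorem \ref{foli}.

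First I would analyse the induced dynamics on the two-dimensional quotient $P/\mathrm{SO}(n)$, which by the proofs of Theorems \ref{2twists} and \ref{foli} is modelled on a linked twist map. For such area-preserving maps, ergodicity and positive metric entropy have been established by Wojtkowski, Burton--Easton, and Przytycki via the cone-field technique: one constructs an invariant unstable cone that expands strictly on a subset of positive measure, and then invokes Katok's criterion to conclude nonuniform hyperbolicity almost everywhere. I would adapt these arguments to the explicit linked twist map coming from the local normal form of $\tau_1^k\tau_2^\ell$ around the transverse intersection, taking $|k|$ and $|\ell|$ sufficiently large so that the twist dominates the geometric obstructions in the overlap annuli, ensuring that the set of points with positive Lyapunov exponent has full Lebesgue measure inside the linking region of $P/\mathrm{SO}(n)$.

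Next I would lift this hyperbolicity from the quotient back to the full symplectic manifold. The $\mathrm{SO}(n)$-action commutes with $\tau$ and, in Seidel's normal form of the Dehn twist, acts by isometries on the fibres of $P\to P/\mathrm{SO}(n)$. Thus the restriction of $\tau$ to $P$ is a skew product over the linked twist map whose fibre dynamics carries zero Lyapunov exponent; Pesin entropy for the total system is then equal to that of the base with the appropriate multiplicity from the Fubini disintegration of the volume form along the $\mathrm{SO}(n)$-orbits. To pass from $P$ to a full open neighbourhood of $S_1\cup S_2$, I would perturb the cone-field argument off the symmetric locus, exploiting that invariant unstable cones for a linked twist map are open and $C^1$-persistent, so that transverse directions to $P$ inherit uniform expansion on a positive measure set.

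The main obstacle, and the reason the statement appears only as a conjecture, is that in the symplectic setting one cannot a priori exclude KAM invariant tori of positive measure around elliptic periodic points of the linked twist map; these carry zero Lyapunov exponent and can in principle fill a positive-measure subset of the linking region, destroying almost everywhere invariance of any candidate cone field. This is precisely the obstruction that keeps the Sinai--Arnold standard map conjecture open, and any complete proof would either have to establish genuine nonuniform hyperbolicity on a full measure subset, perhaps only for a large or residual set of twist parameters $(k,\ell)$, or give a direct upper bound on the measure of the KAM set. Resolving this in the present symplectic setting appears to lie beyond currently available techniques, which is why the analogy with the standard map is a speculation rather than a theorem.
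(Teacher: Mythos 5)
You have correctly identified that this statement is labelled a \emph{conjecture}, and the paper does not prove it; your proposal appropriately ends by concluding that the assertion lies beyond current techniques. So there is no ``paper's own proof'' to compare against, and the substantive question is whether your sketch of the strategy and obstruction matches the paper's own discussion in Section \ref{me}. It partly does, but a few points deserve correction.

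On the two-dimensional base, you do not need to import cone-field arguments of Wojtkowski--Przytycki or restrict to $|k|,|\ell|$ large: the paper's Theorem \ref{hyp} already establishes positive Lyapunov exponent a.e.\ on the reduced two-dimensional linked-twist surface $Y$ for \emph{every} $k\ell<0$, by the explicit partial-ordering/positive-matrix argument of Lemma \ref{lya} combined with the recurrence argument of Lemma \ref{fullm}. Your proposed route for the base is therefore a (valid but unnecessary) alternative. The more serious issue is in your third paragraph. The invariant set $P$ of Theorem \ref{foli} is swept out from the two-dimensional hyperbolic locus by $\mathrm{SO}(n)$-rotations; $P/\mathrm{SO}(n)$ is two-dimensional and the $\mathrm{SO}(n)$-orbits are at most $(n-1)$-dimensional, so $P$ has dimension at most $n+1$ inside the $2n$-dimensional plumbing space and hence has Lebesgue measure zero for $n\ge 2$. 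A ``Fubini disintegration along $\mathrm{SO}(n)$-orbits'' therefore cannot deliver the conjecture: even with positive base entropy, positive fiber measure is missing. The entire difficulty is precisely to control the dynamics in the $2(n-1)$ normal directions transverse to $P$ on a set of \emph{positive $2n$-dimensional measure}, and ``$C^1$-persistence of invariant cones'' does not do this because the normal-form Dehn twist has vanishing twist rate and degenerate normal expansion as one approaches the core spheres.

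Finally, your appeal to a potential positive-measure KAM set at elliptic points is one way to phrase the obstruction, but it is not what the paper actually computes. Section \ref{me} gives a more concrete and sharper warning: at the $1$-parameter families of periodic circles where $\langle u,v\rangle = 0$ the differential $d(\tau_1\tau_2^{-1})^{k+\ell-1}$ is unipotent (all eigenvalues $1$), so these periodic points have exactly \emph{zero} Lyapunov exponent; and in a neighbourhood of these elliptic points the pair of hyperbolic eigenvalues $\lambda_1,\lambda_4$ of nearby periodic points swaps the roles of expansion and contraction as $\langle u,v\rangle$ changes sign while the eigenvectors $w_1,w_4$ vary continuously. This ``flipping'' of stable/unstable directions near the elliptic locus is exactly the mechanism that defeats any global invariant cone field, analogous to the near-rotation behaviour of $DF$ in the standard map near the critical set $\{\psi'(x)\approx 0\}$. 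If you revise, replace the generic KAM argument by this explicit eigenvector-flipping computation; it pins down why a cone-field proof cannot be pushed through and makes the analogy with the standard-map conjecture precise rather than heuristic.
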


Conjecture \ref{mec} represents a significantly stronger statement than Theorem \ref{2twists}. If this conjecture were true, it would provide an intriguing example of a dynamical system with positive entropy that is homotopic to the identity. However, we acknowledge that this conjecture poses a considerable challenge, as we will discuss in Section \ref{me}.

\subsection{Organization of the paper}


In Section~\ref{sec:pre}, we provide a review of the definitions of symplectic Dehn twists, plumbing spaces and the construction of pseudo-Anosov maps on surfaces, and give related classical results. In Section~\ref{sec:rot&invo}, we set the stage of the construction of rotation and involution maps on plumbing domains and introduce the notions of ``admissible curves" and ``admissibly isotopic" to reconstruct Lagrangian submanifolds and the Lagrangian isotopies from them. 

In Section \ref{topent}, we find a subsystem of $A_m^n$ equivalent to Dehn twists on surfaces and prove Theorems \ref{2twists} and \ref{mtwist}, which establish positive topological entropy for composite symplectic Dehn twists. In Section \ref{pf}, we establish the hyperbolicity of the subsystem, demonstrating the presence of positive Lyapunov exponents on a positive measure set. In Section \ref{SFoliation}, we prove Theorem \ref{foli}, establishing the existence of local stable and unstable Lagrangian manifolds within the dynamics of composite symplectic Dehn twists.

In Section \ref{sec:invo}, we provide the proof for Theorems \ref{thm:symrate} and \ref{thm:symrate2}, which focus on the growth of Floer cohomology groups. In section~\ref{sec:classfy}, we prove our classification result--Theorem~\ref{thm:class}. 
In Section \ref{SIntersection}, we establish a connection between the rank of Floer cohomology groups and intersection numbers, completing the proof of our main technical Lemmas \ref{LmIntersection} and \ref{multiintersect} from Section \ref{sec:invo}. In Section \ref{me}, we discuss the conjecture on metric entropy and address the challenges associated to it.

Finally, we include two appendices. Appendix \ref{apd:etp} provides preliminaries on topological entropy, metric entropy, and Pesin theory for non-uniformly hyperbolic dynamics. Appendix \ref{apd:fl} presents the preliminaries on Lagrangian Floer cohomology. 

\section*{Acknowledgment}
We would like to express our sincere appreciation to Weiwei Wu for engaging in several insightful discussions related to this work. We are also grateful to Simion Filip, Leonid Polterovich, Amie Wilkinson, and Seraphina Lee for their valuable comments and suggestions, which have greatly enriched this research. We thank the referees for many valuable suggestions and comments that greatly improves the results, especially Theorem \ref{thm:class}, as well as the writing of the paper.

J.X. acknowledges the support from the New Cornerstone Investigator program, the National Natural Science Foundation of China No.11790273 and No.12271285 and the Xiaomi endowed professorship at Tsinghua University. W.G. is grateful for the support from the NSFC grant 12271285.
This work was conducted while Z.W. was at Tsinghua University.

\section{Preliminaries}\label{sec:pre}
In this section, we provide some preliminaries on symplectic Dehn twist and the plumbing space, and set up notations for later sections. We also include some basics of pseudo Anosov diffeomorphisms such as the Penner's construction.
\subsection{Symplectic Dehn twist and plumbing space}\label{prl}
In this section we shall introduce the preliminary definitions of symplectic Dehn twist and plumbing space.

\begin{defi}[Symplectic Dehn twist]\label{symdeh}
Let $\omega_0$ be the standard symplectic form on $T^*\bS^n$.  {The complement of the zero section $T^*\bS^n\setminus \bS^n$ admits a Hamiltonian circle action $\sigma$ with moment map given by the length function $\mu(\xi)=|\xi|$ with respect to the standard metric.  }
We define the model Dehn twist $\tau $on $T^*\bS^n$ to be 
\beq\tau(x,v)=\left\{\begin{array}{lll}\sigma_{r({|v|})}(x,v) & & |v|>0,\\(A(x),0)& & |v|=0,
\end{array}\right.\eeq 
for $x\in \bS^n,v\in T^*_x\bS^n$, where $A(x)$ is the antipodal map on $\bS^n$ and $r(t)\in C^\infty(\R_{\ge 0},\R_{\ge 0})$ is a smooth function such that $r(0)=\pi$, $r(t)=0$,  {$\frac{dr}{dt}(s)<0, 0< s<\epsilon$,}  and $\frac{d^kr}{dt^k}(0)=0,\forall\ k\geq 1$. Here $|v|$ is taken under the standard metric.

More concretely, under the identification $$T^*\bS^n\simeq \{(x,v)\in\R^{n+1}\times \R^{n+1}\ |\ x,v\in \R^{n+1}, |x|^2=1,\langle x,v\rangle=0\} \subset \R^{2n+2},$$ the symplectic Dehn twist acts by
\beq \label{deheq} \tau\begin{pmatrix}x\\v\end{pmatrix}=\left\{\begin{array}{lll} \begin{pmatrix}\cos(  r(|v|))& \frac{\sin(  r(|v|))}{|v|}\\-\sin(  r(|v|))|v|&\cos(  r(|v|))\end{pmatrix}\begin{pmatrix}x\\v\end{pmatrix}& &v\neq 0\\\begin{pmatrix}-x\\0\end{pmatrix} && v=0
\end{array}\right..\eeq

Since $r(t)=0,\forall\  t \ge \epsilon$, we have that $\tau$ acts by identity outside $T^*_\epsilon(\bS^n)=\{(x,v)\in T^*\bS^n\ |\ |v|<\epsilon\}$, the $\epsilon$-neighborhood of $\bS^n$.

For any Lagrangian $n$-sphere $L$ in a sympelectic manifold $M$, we can find a symplectomorphism $\phi: T^*_\epsilon(\bS^n)\to N(L)$ where $N(L)$ is a tubular neighbourhood of $L$. The symplectic Dehn twist on $L$ is defined as $\tau_L=\phi\tau\phi^{-1}$.

\end{defi}

In \cite{Se}, Seidel gave a configuration of $3$ Lagrangian spheres in which the iterations of $\tau^2$ are not symplectically isotopic to identity.
In this paper, we consider the compositions of symplectic Dehn twists which are supported in small neighborhoods of some Lagrangian spheres that intersect transversely. A model example of the configuration of such Lagrangian spheres is the plumbing space.

The plumbing space of two $n$-spheres $S_1$ and $S_2$, plumbed at $p_1\in S_1$ and $p_2\in S_2$,  is given by locally gluing a neighborhood of $p_1$ in the disk unit cotangent space $D_{p_1}^*S_1$ with a neighborhood of $p_2$ in the unit disk cotangent space $D_{p_2}^*S_2$ and then taking an appropriate corner-smoothing. Following~\cite{Ab,AS} we define the plumbing space as follows.

\begin{defi}[Plumbing]\label{plumb}
Given two $n$-spheres $S_1$ and $S_2$ with points $p_1\in S_1$ and $p_2\in S_2$,  we pick a Riemannian metric on $S_i$ whose restriction to a neighborhood of $p$ is mapped by $\psi_i$ locally isometric to the ball of radius $2$ in $\R^n$ with $p_i$ mapping to the origin and $U_i\subset S_i$ to the ball of radius $1$. This induces a symplectomophism $T^*\psi_i$ from $D^*U_i$ to $\mathbb{D}^n\times \mathbb{D}^n\subset D^*\R^n\simeq \R^{2n}$, where $D^*U_i, D^*\R^n$ stand for unit disk bundles. Take a standard linear symplectomorphism $J: T^*\R^n\to T^*\R^n$ given by $J(x_i)=dx_i,J(dx_i)=-x_i,i=1,2,...,n$. We have a symplectomorphism from $D^*U_1$ to $ D^*U_2$ given by $\psi=(T^*\psi_2)^{-1}JT^*\psi_1$. The resulting \emph{plumbing space} $T^*S_1\sharp T^*S_2$ is given as the completion of the plumbing domain $$P_\psi(S_1,S_2):=D^*S_1\cup_\psi D^*S_2/x\sim \psi(x)\;\hbox{for all}\; x\in D^*S_1.$$
\end{defi}

We can also plumb multiple $n$-spheres together, by plumbing the spheres one by one.  {In this paper, we shall consider the plumbing given by picking antipodal points $p_i,q_i\in S_i$ and plumbing $T^*S_i$ and $T^*S_{i+1}$ by identifying $p_i$ with $q_{i+1}$, for $1\le i\le m-1$.} More precisely, taking $\psi_{i}$ to be the local identification plumbing $p_i\in D^* S_i$ and $q_{i+1}\in D^*S_{i+1}$ together, we obtain the plumbing space $T^*S_1\sharp\cdots\sharp\T^*S_m$ as the completion of the plumbing domain $P_\psi(S_1,S_2,...,S_m)$  where $\psi=(\psi_1,\ldots,\psi_{m-1})$ given by $$D^*S_1\cup_{\psi_1}\cdots\cup_{\psi_{m-1}}D^*S_m =\bigcup\limits_{i=1}^m D^*S_i/x\sim \psi_i(x),\;\hbox{for all}\; x\in D^*S_i,\;\forall i.$$ 
 The restriction of $\psi$ gives the plumbing map between unit disk cotangent circles.


\begin{rmk}
Since the dynamical systems and symplectic topology we are primarily concerned with in this paper are concentrated near the zero section of the region $D^*S_1\cup_{\psi_1}\cdots\cup_{\psi_{m-1}}D^*S_m$, and the computations of topological entropy and Floer homology groups of compositions of symplectic Dehn twists are not affected by the completion of a small Liouville domain $W$ containing the zero section in $D^*S_1\cup_{\psi_1}\cdots\cup_{\psi_{m-1}}D^*S_m$, we always do our computations inside $D^*S_1\cup_{\psi_1}\cdots\cup_{\psi_{m-1}}D^*S_m$,  and  by abuse of notation, sometimes we do not distinguish between the notations 
$P_\psi(S_1,S_2,...,S_m)$ and $W$ provided that no confusion arises. 

\end{rmk}

The following example illustrates that transversely intersecting Lagrangian 2-spheres, as given in the plumbing space, occur naturally on algebraic surfaces.
\begin{exm}[$A^n_m$-configuration, see Section 8 of \cite{Se}]\label{Am}
We consider the symplectic manifold $A_m^n$ given as an affine hypersurface $$A_m^n=\bigg\{(x_1,x_2,\ldots,x_{n+1})\in \C^{n+1}\; \big|\;\ x_1^2+x_2^2+...+x_{n}^2=x_{n+1}^{m+1}+\frac{1}{2}\bigg\}\subset \C^{n+1}$$ in $\C^{n+1}$ equipped with the standard symplectic form $\omega_{\C^{n+1}}=\frac{i}{2\pi}\sum^{n+1}_{j=1}dz_j\wedge d\bar{z}_j$.

For $n=2$, we briefly describe these Lagrangian spheres. Following Seidel~\cite{Se}  we consider the projection $\pi:A_m^n\to \C^n:(x_1,x_2,\ldots,x_n,x_{n+1})\to (x_1,x_2,\ldots,x_n)$, then $\pi$ is a $(m+1)$-fold covering branched along $C=\{x_1^2+\ldots +x_n^2=\frac{1}{2}\}$ and the covering group is generated by $$\sigma: (x_1,x_2,\ldots,x_n,x_{n+1})\to (x_1,x_2,\ldots,x_n,e^{\frac{  2\pi i}{m+1}}x_{n+1}).$$

        Take the figure-8 map $$f:\R^{n+1}\supset \mathbb S^{n}\to \C^n\backslash C: (t_1,t_2,\ldots, t_{n+1})\mapsto (t_2(1+it_1),t_3(1+it_1),\ldots, t_{n+1}(1+it_1)).$$ Then the map is an immersion with one double point $0$ at which the north and south pole $(\pm1,0,\ldots,0)$ of $\mathbb S^n$ which meets transversely in $\C^n$. Lifting the immersion to $A_m^n\subset \C^{n+1}$, the map becomes an embedding $\tilde{f}$ with $\tilde{f}(-1,0,\ldots,0)=\sigma (\tilde{f}(1,0,\ldots,0))$. This shows that $$S_1=\mathrm{Im}(\tilde f),\ S_2=\sigma(S_1),\ldots,\ S_m=\sigma^{m-1}S_1$$ are a collection of smoothly embedded spheres whose only intersections are $S_i\cap S_{i+1},1\le i\le m-1$. We can further show that these embedded spheres can be perturbed into transversely intersecting Lagrangian spheres $L_i$ that such that $$|L_i\cap L_j|=\left\{\begin{array}{lll}1, && |i-j|=1\\ 0,&& |i-j|>1.\end{array}\right.$$

\end{exm}

The symplectic manifold $A_m^n$ can be described as the plumbing space of $m$ complex n-dimensional spheres at antipodal points along a straight line, see~\cite{AS}. For the complex $2$-dimensional Milnor fiber, we record the following theorems of Weiwei Wu on the classification of Lagrangian spheres in an $A^2_m$-surface.

\begin{thm}[{\cite[Theorem~1.1]{wu}}]\label{thm:lagiso}
    Lagrangian spheres in $A^2_m$-surface are Hamiltonian isotopic to the standard spheres up to symplectic Dehn twists along them.
\end{thm}

\begin{thm}[{\cite[Theorem~1.3]{wu}}]\label{thm:hamiso}
    Every compactly supported symplectomorphism of $A^2_m$-surface is Hamiltonian isotopic to a composition of symplectic Dehn twists along the standard spheres.
\end{thm}

\subsection{Construction of pseudo-Anosov maps on surfaces}

By definition, a collection of isotopy classes of simple closed curves in a closed surface $S$ with genus $g\geq 2$ \emph{fills} $S$  if the
complement in $S$ of the representatives in the surface is a collection of topological disks (equivalently, any simple closed curve in $S$ has positive geometric intersection with some isotopy class in the collection). A \emph{multicurve} in $S$ is the union of a finite collection of disjoint simple closed curves in $S$.
\begin{thm}[{Penner~\cite[Theorem~3.1]{Pe}}]\label{thm:PA}
	Let $A=\{c_1,\ldots,c_n\}$ and $B=\{d_1,\ldots,d_m\}$ be two multicurves in a surface $S$ which fill $S$.
	Then any composition of positive powers of Dehn twists $T_{c_i}$ along $c_i$ and negative powers of Dehn twists $T_{d_j}$ along $d_j$ is pseudo-Anosov, where each $c_i$ and $d_j$ appears at least once.
\end{thm}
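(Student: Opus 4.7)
The plan is to follow Penner's original strategy: build a train track $\theta_{AB}$ adapted to the filling pair $(A,B)$, show that the given composition of twists acts on the cone of measured laminations carried by $\theta_{AB}$ by a non-negative matrix, verify that some power of this matrix is strictly positive, and then invoke Perron--Frobenius together with Thurston's classification of mapping classes. First I would associate to $(A,B)$ the bigon track $\theta_{AB}$: put $A$ and $B$ in minimal position and smooth each transverse crossing in the consistent bigon way, so that the arcs of $A\cup B$ between consecutive intersection points become branches and the crossings become trivalent switches. The filling hypothesis implies that $\theta_{AB}$ is both recurrent and transversely recurrent, so its weight cone $V(\theta_{AB})\subset \R^{N}_{\geq 0}$ (with $N$ the number of branches) has non-empty interior, and its interior points parametrise measured laminations of full support.

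Next I would compute the action of each generating twist on $V(\theta_{AB})$. The key claim is that for every $i$, the image $T_{c_i}(\theta_{AB})$ is carried by $\theta_{AB}$, so the induced map $M_{c_i}:V(\theta_{AB})\to V(\theta_{AB})$ is given by a non-negative integer matrix, and similarly $M_{d_j}^{-1}$ is non-negative for each $j$; the asymmetry between the signs of the powers in the hypothesis is exactly what makes the two families push weights in compatible directions on the bigon track. Concretely, $M_{c_i}$ adds to the weight on each branch meeting $c_i$ an integer multiple of the transverse weight picked up along $c_i$, and analogously for $M_{d_j}^{-1}$. Taking the composite $M=\prod_i M_{c_i}^{k_i}\cdot\prod_j M_{d_j}^{-\ell_j}$ gives the linear action of the mapping class on $V(\theta_{AB})$ by a non-negative integer matrix.

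The decisive step is to show that some power $M^N$ is strictly positive. This is where the hypothesis that every generator appears at least once meets the filling hypothesis: since $A\cup B$ fills $S$, every branch of $\theta_{AB}$ is crossed transversely by some $c_i$ or $d_j$, and I would argue inductively that a unit weight concentrated on any single branch spreads to all branches after a bounded number of applications of $M$. Once positivity of a power is established, Perron--Frobenius furnishes a unique positive eigenvector $w^u\in \mathrm{int}\,V(\theta_{AB})$ with eigenvalue $\lambda>1$, yielding a fully supported measured lamination $\mathcal{L}^u$ on which the composition acts by dilation by $\lambda$. Applying exactly the same construction to $T^{-1}$, which is a Penner composition of the same type on the filling pair $(B,A)$, produces a second fully supported measured lamination $\mathcal{L}^s$ with contracting factor $\lambda^{-1}$. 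Because $\mathcal{L}^u$ and $\mathcal{L}^s$ are carried by train tracks built out of $A$ and $B$ respectively, and $A\cup B$ fills $S$, one can verify that the two laminations are transverse and jointly fill the surface; Thurston's characterisation of pseudo-Anosov classes via invariant transverse measured foliations then identifies $T$ as pseudo-Anosov with dilatation $\lambda$.

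The main obstacle I anticipate is the strict positivity of $M^N$. Each $M_{c_i}$ (respectively $M_{d_j}^{-1}$) has a large kernel, namely the weights supported on branches disjoint from $c_i$ (respectively $d_j$), so positivity of the product is not automatic; it requires a careful combinatorial argument that tracks how weights propagate across the switches of $\theta_{AB}$ as the successive twists are applied, and that uses both the filling condition and the appearance of every generator. A secondary technical point is checking that $T_{c_i}$ and $T_{d_j}^{-1}$ genuinely carry $\theta_{AB}$ back to itself by explicit local models at each switch, which is the source of the sign asymmetry between positive powers on $A$ and negative powers on $B$ in the statement of the theorem.
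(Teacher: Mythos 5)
This theorem is quoted in the paper from Penner's article and is not proved there, so there is no in-paper argument to compare against; your proposal should be judged as a reconstruction of Penner's original proof, and as such it is essentially faithful. The architecture you describe --- smooth $A\cup B$ in minimal position into a bigon track $\theta_{AB}$, check that positive twists along the $c_i$ and negative twists along the $d_j$ are carried by $\theta_{AB}$ and act on the weight cone by non-negative integer incidence matrices, prove that the product has a strictly positive power, extract a Perron--Frobenius eigenvector giving an invariant filling measured lamination with dilatation $\lambda>1$, and repeat for $T^{-1}$ to get the stable lamination --- is exactly Penner's. You have also correctly located the two genuinely hard points: the local carrying computation at each switch (which is where the sign asymmetry between $A$ and $B$ enters) and the primitivity of the composite incidence matrix, which uses both the filling hypothesis and the requirement that every curve appear at least once. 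Two caveats worth making explicit if you were to write this out in full: Penner must work with \emph{bigon} tracks rather than honest train tracks (smoothing $A\cup B$ unavoidably creates bigon complementary regions), and the standard machinery relating weight cones to measured laminations has to be redeveloped in that setting; and at the end one must rule out that the invariant eigenlamination contains closed leaves (equivalently, that $T$ is reducible or periodic), which Penner does by showing the eigenvalue $\lambda$ is strictly greater than $1$ and the lamination fills. Neither caveat changes the structure of your argument, but both are nontrivial portions of Penner's paper rather than routine verifications.
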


\begin{thm}[\cite{FLP}]\label{thm:anos}
	Let $f$ be a pseudo-Anosov mapping class of a closed surface $S$ of genus $g\geq 2$ with stretch factor $\lambda>1$. Then for any two isotopy classes of curves
	$\alpha$ and $\beta$ we have
	$\lim_{n\to\infty}\sqrt[n]{\langle\alpha,f^n(\beta)\rangle}=\lambda$  where $\langle\cdot,\cdot\rangle$ is the algebraic intersection form. 
\end{thm}

\begin{figure}[ht]
	\begin{center}
		\includegraphics[width=0.8\textwidth]{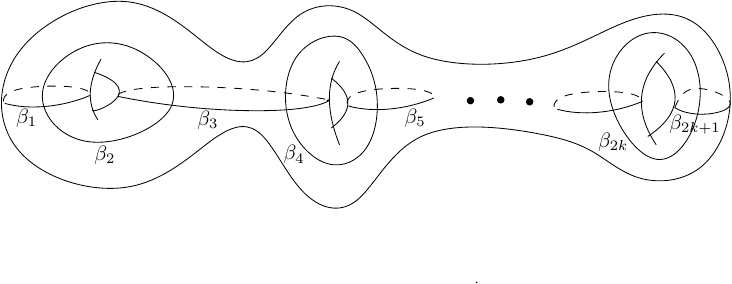}
	\end{center}
     \caption{Curves filling a surface of genus $g=k$}\label{fig:surface}
\end{figure}

\section{Rotation and involution maps on the plumbing domain}\label{sec:rot&invo}
An important observation that we use in this paper is to reduce the problem to a sunsystem which is a 2-dimensional surface. This involves an involution procedure for the reduction and a rotation procedure that reconstruct the higher dimensional manifolds from the 2-dimensional one. 

In this section, we set the stage of our construction by defining these maps is the plumbing of cotangent bundles of spheres.

\subsection{Rotation map on the standard sphere}
First we define the rotation map on the standard sphere $\mathbb{S}^n$.

\begin{defi}Let $\gamma$ be the great circle $\{(y_1,0,\ldots,0,y_{n+1})\}$ on $\mathbb S^n$ passing through the two points  $\mathbf{n}=(0,\ldots,0,1)$ and $-\mathbf{n}=(0,\ldots,0,-1)$ called north pole and south pole respectively. Let $\mathbb S^{n-1}=\{(\theta,0)\in \R^{n}\times \R\ |\ |\theta|^2=1\}$ be the the equator of $\mathbb S^n$. We suppose that $\gamma$ has angle coordinate $\theta=(1,0,\ldots,0)\in \mathbb S^{n-1}$ and $\gamma_\theta$ is a great circle with angle $\theta\in \mathbb S^{n-1}$. We associate to $\theta$ the rotation map $\mathcal R_\theta$, an element in $\mathrm{SO}(n)$ sending $T^*\gamma$ to $T^*\gamma_\theta$ isometrically. More precisely, let $x=(\theta,0)$ with $\theta\in \mathbb{S}^{n-1}$ be a point on the equator. We define
	\[
	 \mathcal{R}_{\theta}:\ T^*\gamma=\{(y_1,0,\ldots,0,y_{n+1},\;v_1,0,\ldots,0,v_{n+1})\in T^*\mathbb S^n\}\longrightarrow T^*\mathbb S^n
	\]
	\[
	(y_1,0,\ldots,0,y_{n+1},\;v_1,0,\ldots,0,v_{n+1})\longmapsto (y_{n+1}\cdot \mathbf{n}+y_1\cdot x,\;v_{n+1} \cdot \mathbf{n}+v_1 \cdot x).
	\]
	Hereafter we call $\mathcal{R}_{\theta}$ the \emph{rotation map}.
\end{defi}

Note that for all points $\theta\in\mathbb S^{n-1}$ we have $\mathcal{R}_{\theta}(\pm\mathbf{n},\mathbf{0})=(\pm\mathbf{n},\mathbf{0})$, and if $p\notin\{(\pm\mathbf{n},\mathbf{0})\}$ in $ T^*\gamma$ the union of images
$\cup_{\theta\in \mathbb S^{n-1}}\mathcal{R}_{\theta}(p)$ is an embedded $(n-1)$-sphere in $T^*\mathbb S^n$.

\begin{lem}\label{LmLagrangian}
Let $c:\ [0,1]\to T^*\gamma$ be an embedded curve. Then the set
\begin{equation}\label{e:Lag}
	L_c=\bigcup_{\theta\in \mathbb S^{n-1}}\mathcal{R}_{\theta}\big(c([0,1])\big)
\end{equation}
is an immersed Lagrangian submanifold $($possibly with boundary$)$ of $T^*\mathbb S^n$ which possibly has non-smooth points at $(\pm\mathbf{n},\mathbf{0})$.

	
\end{lem}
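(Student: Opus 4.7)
The plan is to recognise $L_c$ as the orbit of $c$ under the symplectic $\mathrm{SO}(n)$-action on $T^*\mathbb{S}^n$ lifted from the linear action of the stabiliser of $\mathbf{n}$ in $\mathrm{SO}(n+1)$. The linear action on $\mathbb{R}^{n+1}$ fixing $\mathbf{n}$ preserves $\mathbb{S}^n$, and its cotangent lift therefore preserves $\omega = \sum_i dy_i\wedge dv_i$ restricted to $T^*\mathbb{S}^n\subset T^*\mathbb{R}^{n+1}$. Unwinding the definition, $\mathcal{R}_\theta$ is precisely the action of any $A\in\mathrm{SO}(n)$ sending $e_1$ to $\theta$, so $L_c = \mathrm{SO}(n)\cdot c([0,1])$.

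For a non-polar point $p=(y_1 e_1+y_{n+1}\mathbf{n},\,v_1 e_1+v_{n+1}\mathbf{n})\in T^*\gamma$, the isotropy group is the copy of $\mathrm{SO}(n-1)$ fixing $\mathrm{span}(e_1,\mathbf{n})$, so the orbit through $p$ is diffeomorphic to $\mathbb{S}^{n-1}$ and is transverse to $T^*\gamma$. Hence the parametrisation $(\theta,s)\mapsto \mathcal{R}_\theta(c(s))$ is an immersion away from the polar locus $c^{-1}\{(\pm\mathbf{n},\mathbf{0})\}$, with image of dimension $(n-1)+1=\tfrac12\dim T^*\mathbb{S}^n$. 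It remains only to verify isotropy of $L_c$.

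The isotropy check splits into two pieces. An orbit-tangent at $p$ has the form $X_w = (y_1 w,\,v_1 w)$ with $w\perp\mathrm{span}(e_1,\mathbf{n})$, and a direct computation gives $\omega(X_{w_1},X_{w_2}) = y_1 v_1\langle w_1,w_2\rangle - v_1 y_1\langle w_1,w_2\rangle = 0$, so each $\mathrm{SO}(n)$-orbit is isotropic. On the other hand $\dot c(s)$ has all coordinates concentrated in the directions $e_1$ and $\mathbf{n}$, whereas $X_w$ has all coordinates in the orthogonal complement; since $\omega$ pairs the $i$-th $dy$ only with the $i$-th $dv$, the two vectors are $\omega$-orthogonal. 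Combined with the dimension count this shows $L_c$ is Lagrangian at every non-polar point.

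The only genuine obstacle is smoothness at the polar locus: when $c(s_0)=(\pm\mathbf{n},\mathbf{0})$ the entire $\mathrm{SO}(n)$-orbit collapses to a point and $L_c$ typically acquires a conical singularity at that image point, so one cannot hope for a genuine Lagrangian submanifold there. But the lemma explicitly permits non-smooth points at $(\pm\mathbf{n},\mathbf{0})$, so the preceding argument suffices and no further analysis of the cone behaviour is needed.
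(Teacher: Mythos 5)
Your proof is correct and is essentially the same computation as the paper's, organized slightly differently: the paper parametrizes $L_c$ explicitly and computes the pullback $\omega|_{L_c}$ directly, observing the coefficient of $\sum_i\theta_i\,d\theta_i=\tfrac12 d|\theta|^2=0$ kills everything, while you decompose $T_qL_c$ into orbit directions $X_w$ and the curve direction $\dot c$ and pair them under $\omega$, using $\mathrm{SO}(n)$-equivariance to reduce to $\theta=e_1$. Both hinge on the identical facts that the rotation acts only on the directions orthogonal to $e_1,\mathbf{n}$ and that $\omega$ is block-diagonal in the coordinate index, so the approaches are the same in substance.
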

\begin{proof} Let $c(t)=(y_1(t),0,\ldots,0,y_n(t),\;v_1(t),0,\ldots,0,v_n(t))$. By definition we have
 $$L_c=\{(y_1(t)\theta,y_n(t); v_1(t)\theta,v_n(t))\ |\ t\in[0,1],\theta\in \mathbb{S}^{n-1}\subset \R^n\}.$$ Thus it is a smoothly immersed submanifold outside the south and north pole.

 To finish the proof we notice that $\omega|_{L_c}=0 $ if and only if $\sum_{i=1}^{n+1} dy_i\wedge dv_i|_{L_c}=0$. To verify the latter, we compute
\begin{equation*}
	\begin{aligned}
&\sum_i d(y_1(t)\theta_i)\wedge d(v_1(t)\theta_i)+dy_{n+1}(t)\wedge dv_{n+1}(t)\\
&=(y_1(t)v_1^\prime(t)-v_1(t)y_1^\prime(t))\sum_i \theta_i d\theta_i=\frac{(y_1(t)v_1^\prime(t)-v_1(t)y_1^\prime(t))}{2}d\big(\sum_i \theta_i^2\big)
=0,\notag
\end{aligned}
\end{equation*}
where in the last inequality we have used $\sum \theta_i^2=1$. Thus we see that $L_c$ is locally Lagrangian.
\end{proof}

By the definition of the symplectic Dehn twist $\tau$ on $T^*\mathbb{S}^n$, we see that it is commutative with the rotation map.
\begin{lem}
For the symplectic Dehn twist $\tau$ on $T^*\mathbb{S}^n$ we have
	\begin{equation}\label{e:tauro}
		\mathcal R_\theta \circ\tau|_{T^*\gamma}=\tau\circ \mathcal R_{\theta}|_{T^*\gamma},
	\end{equation}
	and
	\begin{equation}
		\tau(L_c)=L_{\tau(c)}
	\end{equation}
   for any smooth curve $c:\ [0,1]\to T^*\gamma$.

\end{lem}

\subsection{Rotation map on the plumbing domain}\label{sec:subL}

Let $\gamma_i\subset S_i$, $i=1,\ldots,m$ be a geodesic circle which contains a plumbing point and its antipodal point (denoted by $p_i$ and $-p_{i}$) of $S_i$ as given in Section~\ref{prl}. Then for each $i=1,\ldots,m-1$, the plumbing map $\psi_i$ sends an open subset of $p_i$   in $D^*\gamma_i$  onto that of $-p_{i+1}$ in $D^*\gamma_{i+1}$. In other words,  the restriction of $\psi_i$ to $D^*\gamma_{i}$ gives rise to the plumbing map from $D^*\gamma_i$ to $D^*\gamma_{i+1}$, from which we get a plumbing domain denoted by $P_\psi(\gamma_1,\gamma_2,\ldots,\gamma_m)$.

We identify each $S_i$ with $\mathbb{S}^n$ through a map $\phi_i:\ S_i\to\mathbb{S}^n $ such that the image of the great circle $\gamma$ is exactly $\gamma_i$. Then for each $\theta\in\mathbb S^{n-1}$, the rotation maps defined above give rise to a symplectic map $\mathcal{R}_{\theta}^i:\ T^*\gamma_i\to T^*S_i$ by
\[
\mathcal{R}_{\theta}^i=(T^*\phi_i)^{-1}\circ\mathcal{R}_{\theta}\circ T^*\phi_i|_{T^*\gamma_i}
\] which we call the \textit{rotation map} on $S_i$.

To extend the Lagrangian submanifolds $L_c^i$ from a single sphere to the full space $P_\psi(S_1,S_2,\ldots,S_m)$, we need to know the effect of the rotation maps $\mathcal{R}_{\theta}^i$ under the local identification maps $\psi_i:\ T^*U_i^+\to T^*U_{i+1}^-$, where the open subsets $U_i^\pm\subset S_i$ are the preimages of the open subsets
 $$U_\pm=\{x=(x_1,x_2,...,x_{n+1})\in\R^{n+1}\ | \ |x|^2=1,\ \pm x_{n+1}>0\}\subset  \mathbb S^{n}$$
 under the map $\phi_i: \ S_i\to\mathbb S^{n}$.
\begin{lem}\label{rotcom}
We may choose  local identification maps
	$\psi_i$ of $P_\psi(S_1,\ldots, S_n)$ so that
 for any $\theta\in \mathbb S^{n-1}$ and any geodesic circle $\gamma$ of $S_i$ containing the plumbing point $p$ of $S_i$ with $S_{i+1}$, we have
	\begin{equation}\label{e:Rt}
		\mathcal{R}_{\theta}^{i+1}\circ \psi_i=\psi_i\circ \mathcal{R}_{\theta}^i\quad \hbox{on}\;D^*\gamma\cap U_i^+,
	\end{equation}
	where  $U_i^+$ is the plumbing region of $p_i$ in  $T^*S_i$.
	
\end{lem}
\begin{proof}
We take the plumbing map $\psi_i$ to be given by $(d\varphi_{i+1}^-)^{-1}Jd\varphi_i^+$ where $\varphi_j^\pm=\phi_j|_{U_j^\pm}\varphi_\pm: U_j^\pm\to \R^n$ with $$\varphi_\pm:\ (x_1,\ldots,x_{n+1})\mapsto \frac{\arccos(\pm x_{n+1})}{\sqrt{1-x_{n+1}^2}}(x_1,\ldots,x_n).$$ Then by the definition of $\mathcal R_\theta$ we  calculate that the conjugation of $\mathcal R^j_\theta$  via $d\varphi_j^\pm$ acting on $(d\varphi^\pm_j)^{-1}(T^*\gamma_j)\subset T^*\R^n$ is simply
$$d\varphi_j^\pm \mathcal R^j_\theta(d\varphi_j^\pm)^{-1}=d\varphi_{\pm}\mathcal R_\theta d\varphi_\pm^{-1}:\ (t,0,\ldots,0;s,0,\ldots,0)\mapsto (t\theta,s\theta) \in T^*\R^n$$ for any $j$.

Therefore, the rotation map rotates both the base coordinates $x=(t,0,\ldots,0)$ and the fiber coordinate $v=(s,0,\ldots,0)$ to the direction of $\theta$ while the plumbing map from $S_i$ to $S_{i+1}$ switches the roles of base and fiber via the complex map $J$. Thus we see that they are commutative.
\end{proof}

On the great circle $\gamma=\{(y_1,0,\ldots,0,y_{n+1})\subset \mathbb S^n\}$, the rotation  $$\mathcal{R}_{(-1,0,\ldots,0)}: (y_1,0,\ldots,0,y_{n+1},\;v_1,0,\ldots,0,v_{n+1})\longmapsto (-y_1,0,\ldots,0,y_{n+1},\;-v_1,0,\ldots,0,v_{n+1})$$ acts as an involution on $T^*\gamma$, and satisfies $\mathcal{R}_{(-1,0,\ldots,0)}(T^*\gamma)=T^*\gamma$ and $\mathcal{R}_{(-1,0,\ldots,0)}^2=Id$. By Lemma \ref{rotcom}, $\mathcal{R}^i_{(-1,0,\ldots,0)}$ agrees in the plumbed neighborhoods, therefore, one can define a map 
on the whole plumbing domain $P_\psi(\gamma_1,\gamma_2,\ldots,\gamma_m)$ given as follows.

 \begin{defi}\label{barR} We define the map $\bar{ \mathcal{R}}$ on the plumbing domain $P_\psi(\gamma_1,\gamma_2,\ldots,\gamma_m)$, given by
\begin{equation}\label{e:reflec}
	\begin{split}
		\bar{ \mathcal{R}}:\ P_\psi(\gamma_1,\gamma_2,\ldots,\gamma_m)\longrightarrow P_\psi(\gamma_1,\gamma_2,\ldots,\gamma_m),\\
		\bar{ \mathcal{R}}(z)=\mathcal{R}_{(-1,0,\ldots,0)}^i(z),\quad\hbox{if $z\in D^*\gamma_i$}.
	\end{split}
\end{equation}
\end{defi}

\subsection{The involution on the plumbing domain}\label{subsec:inv}
Now we consider an explicit involution on the plumbing domain $P_\psi(S_1,S_2,\ldots,S_m)$. This would be useful to compute the rank of Lagrangian Floer cohomology.

To do this, we first consider a canonical involution on the cotangent bundle of the standard $n$-sphere. As before, we write
$$T^*\mathbb S^n=\big\{(x,v)\in\mathbb{R}^{n+1}\times \mathbb{R}^{n+1}\ |\  |x|=1,\;\langle x, v\rangle=0\big\}.$$
Set $\mathbf{x}=(x_2,\ldots,x_n)$ and $\mathbf{v}=(v_2,\ldots,v_n)$.
We can define a symplectic involution map on $T^*\mathbb S^n$ by
\[
\iota_0(x_1,\mathbf{x},x_{n+1},v_1,\mathbf{v},v_{n+1})=(x_1,-\mathbf{x},x_{n+1},v_1,-\mathbf{v},v_{n+1}).
\]
Then the fixed point set of $\iota_0$ is precisely the cotangent bundle of a great circle $\gamma\in \mathbb S^n$
\begin{equation}\label{e:C}
	\big(T^*\mathbb S^n\big)^{\iota_0}=T^*\gamma=\big\{(x_1,\mathbf{0},x_{n+1},v_1,\mathbf{0},v_{n+1})\in T^*\mathbb S^n\big\}.
\end{equation}
Using the involution $\iota_0$ we can now define an involution on $P_\psi(S_1,S_2,\ldots,S_m)$ as follows.
For each sphere $S_i$ we pick the geodesic circle $\gamma_i$, which contains every plumbing point of $S_i$ and a symplectomorphism
$\psi_i:\ T^*S_i\to T^*\mathbb S^n$ such that $\psi_i$ maps the zero section $S_i$ of $T^*S_i$ to $\mathbb S^n$ of $T^*\mathbb S^n$, and $\gamma_i$ to $\gamma$. Then we define the involution $\iota_{\gamma_i}$ on $T^*S_i$ by
$
\iota_{\gamma_i}=(\psi_i)^{-1}\circ\iota_0\circ\psi_i
$.

Furthermore, by the definition of $P_\psi(S_1,S_2,\ldots,S_m)$ we can assume that $\iota_{\gamma_i}(p)=\iota_{\gamma_{i+1}}(p)$ for all points $p\in D^*S_i\cap D^*S_{i+1}$, $i=1,\ldots,m-1$. The \textit{involution} $\iota$ on $P_\psi(S_1,S_2,\ldots,S_m)$ is defined by
\begin{equation}\label{iota}
    \iota(p)=\iota_{\gamma_i}(p)\quad \hbox{if} \quad p\in  D^*S_i.
\end{equation}

By (\ref{e:C}), for each sphere $S_i$ we have
\begin{equation}\label{e:fix}
	\big(T^*S_i\big)^\iota=T^*\gamma_i.
\end{equation}
This implies that the fixed point set of $\iota$ is symplectomorphic to the plumbing domain $P_\psi(\gamma_1,\gamma_2,\ldots,\gamma_m)$,  { which is topologically a punctured surface}. Also, each zero section $S_i$ of $P_\psi(S_1,S_2,\ldots,S_m)$ is invariant under the map $\iota$.
Moreover, a direct calculation shows that $\iota$ and the symplectic Dehn twist $\tau_i$ along $S_i$ have the following relations:
\begin{equation}\label{e:inv+tst}
	\iota\circ\tau_i=\tau_i\circ\iota,\quad i=1,\ldots,m.
\end{equation}

 {Moreover, picking the plumbing map to be given as in the proof of Lemma \ref{rotcom}, a direct calculation shows that $\tau_i$ restricted to ${\rm Fix}(\iota)$ is exactly the normal Dehn twist around $\gamma_i$ in the surface $P_\psi(\gamma_1,\gamma_2,\ldots,\gamma_m)$. }

Furthermore, we note that the involution and the rotation map on $\mathbb{S}^n$ satisfy $$\iota \circ \mathcal R_{\theta}=\mathcal R_{\theta} \circ \iota$$
which gives us the following lemma.
\begin{lem}
	
Let $\gamma_i$ be circles chosen as above and $c:[0,1]\to P_\psi(\gamma_1,\gamma_2,\cdots, \gamma_m)$ be an embedded curve.  Then Lagrangian submanifold $L_c$ given as in (\ref{e:Lag}) is invariant under the involution $\iota$, that is,
	\begin{equation}\label{e:laginv}
		\iota(L_c)=L_c.
	\end{equation}

\end{lem}

\subsection{Admissible curves and Lagrangian isotopies}
In this section, we introduce the notion of admissible curves which generates Lagrangian submanifolds after the rotation map, and relate it to the Lagrangian isotopies. 
\begin{defi}\label{admiss}
	We call a smooth embedded curve $c: [0,1]\to P_\psi(\gamma_1,\gamma_2,\ldots,\gamma_m)$ \emph{admissible} if $\bar{ \mathcal{R}}(c([0,1]))=c([0,1])$ (see (\ref{e:reflec}) for the definition of $\bar{ \mathcal{R}}$).
The set of admissible curves in $P_\psi(\gamma_1,\gamma_2,\ldots,\gamma_m)$ is denoted by $\mathcal{C}_{ad}$.
\end{defi}

We remark here that the curve in the above definition is not neccesarily closed. By abuse of notation, we sometimes use the same notaion $c$ to stand for the image of a smooth curve $c: [0,1]\to P_\psi(\gamma_1,\gamma_2,\ldots,\gamma_m)$.

We also define the notion of two admissible curves being admissibly isotopic.
\begin{defi}\label{def:lagiso}
Two curves $\gamma_0,\gamma_1\in \mathcal{C}_{ad}$ are called \emph{admissibly isotopic} if there is an isotopy $(f_s)_{0\leq s\leq 1}$ in $\hbox{Diff}_c(P_\psi(\gamma_1,\gamma_2,\ldots,\gamma_m))$ such that $\gamma_s=f_s(\gamma_0)$ is a family of admissible curves connecting $\gamma_0$ to $\gamma_1$. 	
\end{defi}

\begin{lem}\label{lem:smlg} Let $c: [0,1]\to P_\psi(\gamma_1,\gamma_2,\ldots,\gamma_m)$ be a smooth embedded admissible curve. Then $L_c$ is a smoothly embedded Lagrangian submanifold in $P_\psi(S_1,\ldots,S_m)$.  {Furthermore, if $c: \mathbb S^1\to P_\psi(\gamma_1,\gamma_2,\ldots,\gamma_m)$ is a smooth embedded admissible closed curve, then $L_c$ is a closed smoothly embedded Lagrangian submanifold. }
\end{lem}
\begin{proof} By Lemma \ref{LmLagrangian} and Lemma \ref{rotcom}, $L_c$ is an immersed Lagrangian submanifold. Furthermore, the condition that $\bar{\mathcal R}(c)=c$ ensures that $L_c$ does not have any self-intersection.

In fact, we claim that $\mathcal{R}^{i(p)}_\theta(p)\neq \mathcal{R}^{i(q)}_{\theta'}(q)$ for any  $(p,\theta), (q,\theta^\prime)\in c([0,1])\times \mathbb S^{n-1}$ with $(p,\theta)\neq (q,\theta^\prime)$, unless $(q,\theta^\prime)=(\bar{\mathcal{R}}(p),-\theta)$, here $i(p)$ is the index $i$ such that $p\in T^*S_i$ (by Lemma~\ref{rotcom}, different choices of $i(p)$ give the same map $\mathcal{R}^{i(p)}_\theta$). If $\mathcal{R}^{i(p)}_\theta(p)=\mathcal{R}^{i(q)}_\theta(q)\in T^*S_i$ and $(q,\theta^\prime)\neq(p,\theta)$, we may take $i(p)=i(q)=i$ and conjugate the maps to the standard sphere. On the standard sphere we have $\mathcal R_\theta(T^*\gamma)\cap \mathcal R_{\theta^\prime}(T^*\gamma)=\{(\pm \textbf{n},\textbf{0})\}$ unless $\theta^\prime=\pm\theta$. Therefore, we see that $p=q\in \{(\pm \textbf{n},\textbf{0}) \}$ or $\theta=-\theta^\prime$. By the definition of $\mathcal{R_\theta}$, $\mathcal{R}^{i(p)}_\theta(p)=\mathcal{R}^{i(q)}_{-\theta}(q)$ is equivalent to $\bar{\mathcal{R}}(p)=q$. 

Therefore, we see that if $\bar{\mathcal R}(c)=c$, then $\cup_{\psi_i}\mathcal R_{\theta}^i (c[0,1])=\cup_{\psi_i}\mathcal R_{-\theta}^i (c[0,1])$, and $\big (\cup_{\psi_i}\mathcal R_{\theta}^i (c[0,1])\big)\cap \big(\cup_{\psi_i}\mathcal R_{\theta^\prime}^i (c[0,1])\big)\in \{\text{plumbing points}\}$ if $\theta^\prime\neq \pm\theta$. This shows that $L_c=\cup_{\psi_i}\cup_\theta R_\theta^i (c[0,1])$ has no self-intersection outside the plumbing points. Without loss of generality we may assume that the curve $c$ has unit speed. If $p=c(t)\in S_i$ is a plumbing point for $t\in [0,1]$, then the neighborhood $U_p$ of $p$ in $L_c$ is given by $U_p=\{ \mathcal R_\theta^i (c(t+s)), \theta\in \mathbb S^{n-1},s\in (-\epsilon,\epsilon)\}$, and since $\mathcal R_\theta^i$ is an isometry the only relation between the points in the set are $\mathcal R_\theta^i (c(t+s))=\mathcal R_{-\theta}^i (c(t-s))$. Therefore, $(|s|,\theta)$ gives a smooth polar coordinate of $U_p$, which shows that $L_c$ is also smoothly embedded at $p$. Therefore, $L_c$ is a smoothly embedded Lagrangian submanifold. 

If $c: \mathbb S^1\to P_\psi(\gamma_1,\gamma_2,\ldots,\gamma_m)$ is a smooth embedded admissible closed curve, then $L_c$, as a union of $\mathcal R_{\pm\theta}^i (c(\mathbb S^1))$ also has no boundary. This shows that $L_c$ is a closed smoothly embedded Lagrangian submanifold. 
\end{proof}

\begin{defi}\label{df:gecur}
Let  {$c:[0,1]\to P_\psi(\gamma_1,\gamma_2,\ldots,\gamma_m)$} be an admissible curve. We call $c$ the \emph{generating curve} of the Lagrangian submanifold $L_c$. We also say that $L_c$ is \emph{generated} by $c$.
\end{defi}

The next lemma is important for constructing Lagrangian isotopies through admissibly isotopic curves in $W:=P_\psi(\gamma_1,\gamma_2,\ldots,\gamma_m)$.
Denote by $\Sigma_\psi$ the set consisting of plumbing points and their antipodal points in $W$.
Note that if $c\in \mathcal{C}_{ad}$ is an admissible curve containing exactly two points of $\Sigma_\psi$ then $L_c$ is a Lagrangian sphere in $P_\psi(S_1,S_2,\ldots,S_m)$. Let $\hbox{Diff}_c(W,\Sigma_\psi)$ be the group of compactly supported diffeomorphisms $f:W\to W$ with $f(\Sigma_\psi)=\Sigma_\psi$.

\begin{defi}
A \emph{Lagrangian isotopy} on a symplectic manifold $(M,\omega)$ is a smooth family of Lagrangian embedding $\Phi:L\times [0,1]\to M$. Since $\Phi^*\omega$ vanishes on the fibers $L\times\{\hbox{point}\}$, we have $\Phi^*\omega=\theta_s\wedge ds$. We call a Lagrangian isotopy $\Phi$ exact if $\theta_s\in\Lambda^1(L)$ is exact for all $s$.	
\end{defi}

 We remark here that a Lagrangian isotopy on $M$ is exact if and only if it can be extended to an  ambient Hamiltonian isotopy of $M$ since if we write $\theta_s=dH_s$ on $L$ and extend $H_s\circ \Phi_s^{-1}$ to a time dependent Hamiltonian function on $M$ then we get the desired Hamiltion isotopy, and on the contrary, if we have a Hamiltonian isotopy $\phi_s$ on $M$, then the vector field $X_s$ generated  by $\phi_s$ gives the desired one-forms $\theta_s=\omega(\cdot,X_s)$.

The following lemma follows immediately from Lemma \ref{LmLagrangian}.
\begin{lem}\label{lem:isot}
	If $c_1,c_2\in \mathcal{C}_{ad}$ are admissibly isotopic in $P_\psi(\gamma_1,\gamma_2,\ldots,\gamma_m)$, then $L_{c_1}$ and $L_{c_2}$ are Lagrangian isotopic.
\end{lem}

By (\ref{e:laginv}), we know that for any $c\in \mathcal{C}_{ad}$, the Lagrangian $L_c$ is invariant under the involution $\iota$ associated to the circles $\gamma_1,\gamma_2,\ldots,\gamma_m$, i.e.,
\begin{equation}\label{e:Laginv}
	\iota(L_c)=L_c.
\end{equation}

\section{Topological entropy of composite Dehn twists}\label{topent}

In this section, we give the proofs of Theorems \ref{2twists} and \ref{mtwist} on positive topological entropy of composite Dehn twists. The key observation is that we can find an invariant subset of dimension~$2$ on which the system is reduced to the usual Dehn twists on surfaces.

Let $M$ be a symplectic manifold admitting $m$-Lagrangian spheres $S_i$ which pairwise intersect as in the $A^n_m$ configuration, which as given in our assumption in Theorems \ref{2twists} and \ref{mtwist}. By Weinstein's tubular neighborhood theorem, we may identify a tubular neighborhood $W$ of $\cup S_i\subset M$ with the neighborhood of $\cup S_i$ in the plumbing domain $P_\psi(S_1,S_2,\ldots ,S_m)$ of $m$ standard spheres. Let $\iota$ be the involution on the plumbing domain $P_\psi(S_1,S_2,\ldots ,S_m)$, given in Equation (\ref{iota}). 

We denote by $X$ the set of fixed points of the involution $\iota$ in $W$. Let $\gamma_i\subset S_i$ be the circle fixed by $\iota$ in the Lagrangian sphere $S_i$. Then we see that $X\subset P_\psi(\gamma_1,\gamma_2,\ldots,\gamma_m)$ is a real $2$-dimensional manifold which is homeomorphic to a subset of $\R^2$ with parallel sides identified, as shown in Figures \ref{m=3}, \ref{m=4}, \ref{modd} and \ref{meven}. Therefore, $X$ is homeomorphic to a surface $\hat \Sigma_{[m/2]}$ of genus $[m/2]$ with one punctured point if $m$ is even and $2$ punctured points if $m$ is odd. The Lagrangian spheres $S_i$ restricts to circles $\gamma_i\subset X$, as illustrated in Figures \ref{m=3}, \ref{m=4}, \ref{modd} and \ref{meven}.

\begin{figure}
    \centering
    \includegraphics[width=0.8\linewidth]{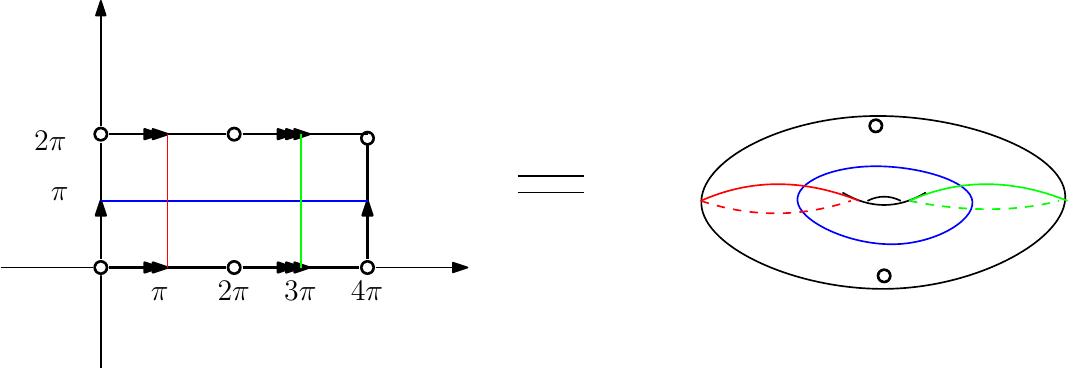}
 
    \caption{The surface $\hat \Sigma_{[m/2]}$ for $m=3$}
    \label{m=3}
\end{figure}
\begin{figure}
    \centering
    \includegraphics[width=0.82\linewidth]{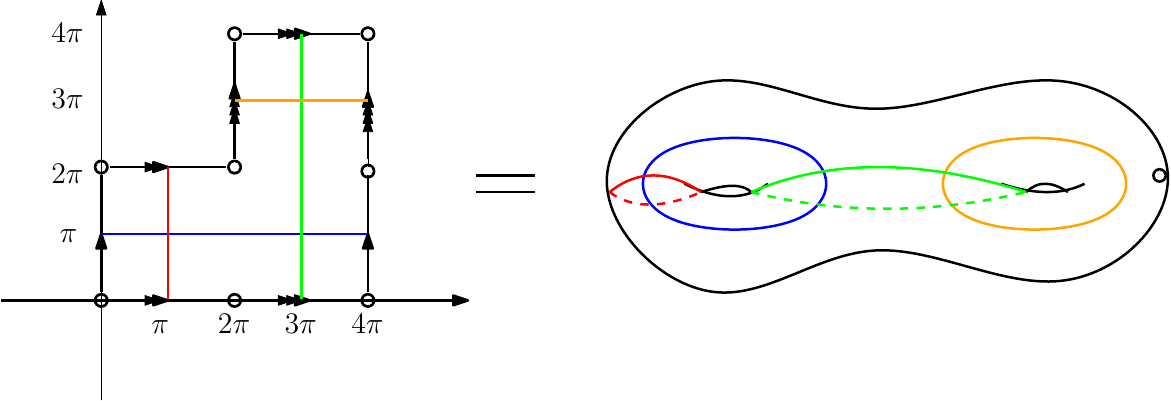}
    \caption{The surface $\hat \Sigma_{[m/2]}$ for $m=4$}
    \label{m=4}
\end{figure}
\begin{figure}
    \centering
    \includegraphics[width=0.8\linewidth]{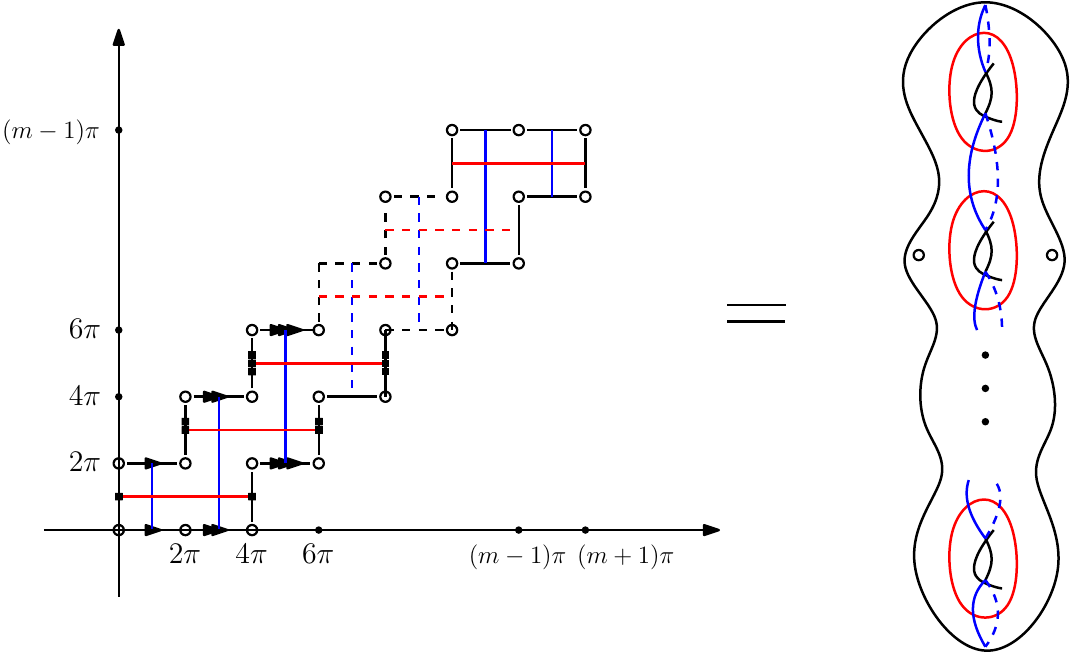}
    \caption{The surface $\hat \Sigma_{[m/2]}$ for odd $m$}
    \label{modd}
\end{figure}
\begin{figure}
    \centering
    \includegraphics[width=0.8\linewidth]{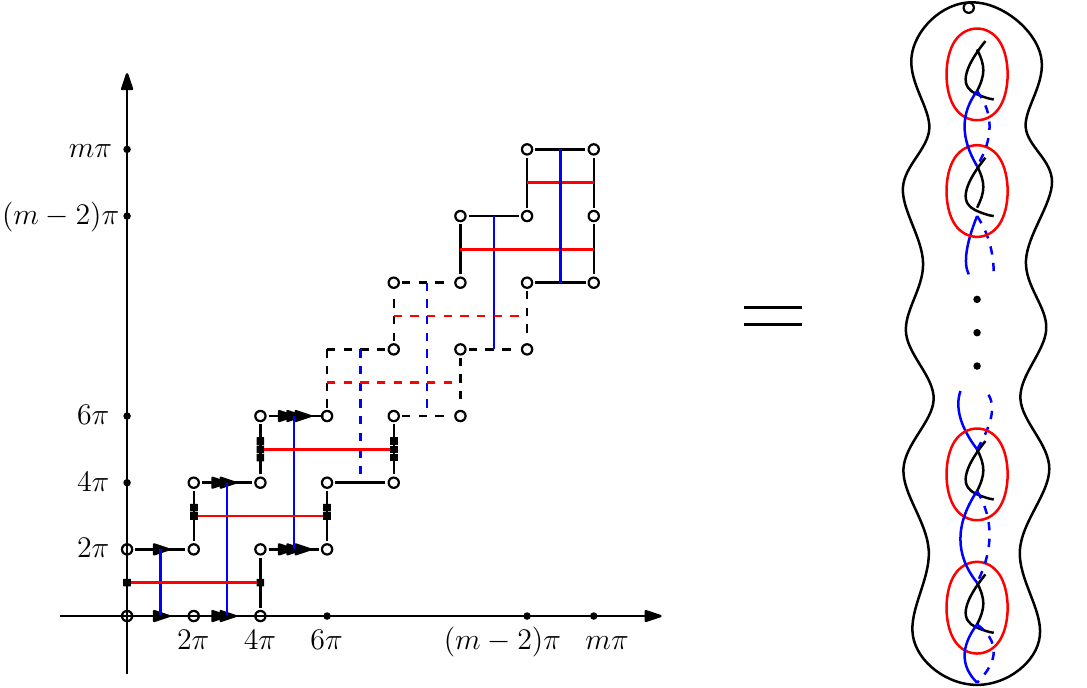}
    \caption{The surface $\hat \Sigma_{[m/2]}$ for even $m$}
    \label{meven}
\end{figure}

It is shown in~(\ref{e:inv+tst}) that we may choose the Dehn twists on $P_\psi(S_1,\ldots ,S_m)$ to be commutative with $\iota$ such that the restriction of a single symplectic Dehn twist $\tau$ on $X$ is exactly a Dehn twist of $\gamma_i$ on $P_\psi(\gamma_1,\ldots ,\gamma_m)$.

Thus, in the case $m=2$, the symplectic Dehn twists restricted to $X$ is simply the Dehn twists on a punctured torus.

\begin{lem}\label{LmLTM}
The set  $X$ is diffeomorphic to a subset of the punctured torus $\mathbb{T}^2-\{*\}$. We identify $\mathbb T^2-\{*\}=[0,2\pi]\times [0,2\pi]-\{(2\pi,2\pi),(2\pi,0),(0,2\pi),(0,0)\}/\sim$ where $(x,y)\sim (z,w)$ if $|x-z|=2\pi $ or $|y-w|=2\pi$. Then the restriction of $\tau_1$ and $\tau_2$ on $X$ is conjugate to the maps $T_1$ and $T_2$ of the form $$T_1(x,y)=(x+r(y-\pi),y);\quad T_2(x,y)=(x,y-r(x-\pi))$$ where $r(t)$ is the smooth function $r\in C^\infty(\R,\R)$ given in Definition \ref{symdeh}, extended to $\R$ by $r(t)+r(-t)=2\pi$.
\end{lem}

The topological entropy of the subsystem restricted to $X$ is no greater than the topological entropy of the whole system on $M$. On the other hand, the topological entropy of compositions of Dehn twists on the torus can be explicitly bounded below using Yomdin's inequality. This proves positive topological entropy of the composition of symplectic Dehn twists.

\begin{proof}[Proof of Theorem \ref{2twists}]

Let $\mathcal{T}=T_1^k T_2^\ell$ on $X\subset \mathbb{T}^2$, then under appropriate basis in $H^1(\mathbb{T}^2,\R)$, the induced linear map $\mathcal{T}^*: H^1(\mathbb{T}^2,\R)\to H^1(\mathbb{T}^2,\R)$ is exactly $\begin{pmatrix}1 & 1\\0& 1\end{pmatrix}^k\begin{pmatrix}1 & 0\\-1& 1\end{pmatrix}^\ell$.  {Let $\rho(\mathcal{T}^*)$ denote the largest absolute value of the eigenvalues of $\mathcal{T}^*: H_1(\mathbb{T}^2)\to H_1(\mathbb{T}^2)$. Then we have $\rho(\mathcal{T}^*)>1$ if $k\ell\neq 0,1,2,3,4$.}

By Yomdin's inequality (see Theorem \ref{yom}), we have $h_{top}(\mathcal{T})\ge \log \rho(\mathcal{T}^*)>0$. This shows that $$ h_{top}(\mathcal{T},\mathbb{T}^2)>0 .$$ Combined with Lemma \ref{LmLTM}, we have $$ h_{top}(\tau_1^k\tau_2^\ell, X)=h_{top}(\mathcal{T},\mathbb{T}^2\backslash\{*\})=h_{top}(\mathcal{T},\mathbb{T}^2)>0 .$$

Let $M$ be a symplectic manifold with two Lagrangian spheres transversely intersecting at one point, then a small neighborhood $U$ of the two spheres can be symplectically identified with a small neighborhood of $S_1\cup S_2$ in $P_\psi(S_1,S_2)$. We may pick the symplectic Dehn twists around $S_1$ and $S_2$ to be identity outside $U$. Therefore, we have $h_{top}(\tau, M)\ge h_{top}(\tau, U)= h_{top}(\tau_1^k\tau_2^\ell, A^n_m)\ge  h_{top}(\tau_1^k\tau_2^\ell, X)\ge h_{top}(\mathcal{T},\mathbb{T}^2)>0$.
\end{proof}

 {The proof of Theorem \ref{mtwist} for the case of $m$ Lagrangian spheres is similar.}
\begin{proof}[Proof of Theorem \ref{mtwist}]
For more than $2$ Lagrangian spheres in the $A^n_m$ configuration, the Dehn twists $\tau_i$ restricted to $X$ is conjugate to the usual Dehn twists around simple closed curves.

\begin{lem}\label{mLTM}
The manifold $X\subset P_\psi(S_1,S_2,\ldots,S_m)$ fixed by the involution $\iota$ is diffeomorphic to a subset of the punctured surface $\hat \Sigma_{[m/2]}$ which we view as a subset of $\R^2$ with parallel sides identified as in Figures \ref{modd} and \ref{meven}, and the Dehn twists $\tau_i$ on $X$ are conjugate to Dehn twists $T_i$ on $\Sigma_{[m/2]}$ of the form $T_i(x,y)=(x+r(y-i\pi),y)$ for $i$ odd and $T_j(x,y)=(x,y-r(x-(j+1)\pi))$ for $j$ even.  
\end{lem}

 {
Penner's construction of pseudo-Anosov diffeomorphisms in Theorem \ref{thm:PA} shows that the map $\mathcal{T}$ is in a pseudo-Anosov mapping class on $\Sigma_{[m/2]}$. By Theorem \ref{thm:anos}, the intersection number between any two curves grow exponentially, which forces $\rho(\mathcal{T}^*)>1$, where $\mathcal{T}^*$ is the induced action of $\mathcal{T}$ on $H^1(\Sigma_{[m/2]},\R)$, and $\rho(\cdot)$ is the largest absolute value of the eigenvalues of a matrix. By Yomdin's inequality (see Theorem \ref{yom}), this shows that $h_{top}(\mathcal{T}_{[m/2]},S)\ge \log\rho(\mathcal{T}^*)>0$. So we have $$ h_{top}(\tau, A_m^n)\ge h_{top}(\tau, X)\ge h_{top}(\mathcal{T},\hat \Sigma_{[m/2]}\})\ge h_{top}(\mathcal{T},\Sigma_{[m/2]})>0 $$ which proves Theorem \ref{mtwist}.} 
\end{proof}


\section{Hyperbolicity of the subsystem}\label{pf}
In this section, we shall prove the hyperbolicity of the composition $T_1^{k_1}\ldots T_m^{k_m}$, which by Pesin theory gives not only positive topological entropy, but also stable and unstable manifolds as well as standard consequences from ergodic theory. We can apply this result for Theorem \ref{foli} on the existence of stable/unstable laminations of Dehn twists and for an alternative proof of Theorems \ref{2twists} and \ref{mtwist}. For a preliminary on Pesin theory, we refer readers to Appendix \ref{apd:etp}. In the following we set $\mathcal{T}=T_1^{k_1}\ldots T_m^{k_m}$.

\subsection{Hyperbolicity of the subsystem and existance of Lagrangian submanifolds}
\begin{thm}\label{hyp} Let $X$ be the punctured surface and let $T_i$ be the Dehn twists as in Lemma \ref{mLTM}, then for $\mathcal{T}=T_1^{k_1}\ldots T_m^{k_m} $, $k_ik_{i+1}<0$, the Lyapunov exponent $\chi^+(x)$ $($see Definition \ref{DefLyap}$)$ of $\mathcal{T}$ is positive for almost every $x\in X$.
\end{thm}

\begin{rmk}
The subsystem for $m=2$, i.e. $(\tau_1^k\tau_2^\ell,X)\simeq (T_1^kT_2^\ell ,\mathbb{T}^2\setminus\{*\})$, is  similar to  the linked twist map on the torus studied in literature \cite{bur}.  However, there is a difference between them. Indeed, the linked twist map considered there is defined by composing $T_1, T_2$ with $r^\prime(t)>0,-\epsilon<t<\epsilon$ for all $\epsilon>0$. However, in our case, we have $r^\prime(0)=0$ and $r^\prime (t)>0, \forall \ 0<|t|<\epsilon$. When $k\ell<0$, the corresponding map is called co-rotating linked twist map in \cite{bur} and $k\ell>0$ counter-rotating. Technically, the latter case is much harder, so we focus only on the co-rotating case in this paper.

\end{rmk}

In what follows we shall extend the ideas to $m>2$ and study the multi-linked twist map $(X,\mathcal{T})$.

We denote by $X_\epsilon$ the $\epsilon$-neighborhood of $\cup\gamma_i$ in $X$, which we identify with the $\epsilon$-neighborhood of $\cup\gamma_i$ in $P_\psi(\gamma_1,\ldots,\gamma_m)$. We denote by $A_i$ the $\epsilon$-neighborhood of $\gamma_{2i-1}$ and by $B_i$ the $\epsilon$-neighborhood of $\gamma_{2i}$.

For $0\le \delta<\epsilon<1$, we take
$$Y^1_\delta=\{(x,y)\in X\ |\  \exists\ j\in \N\text{ even},\ s.t.\ \delta<|x-j\pi|<\epsilon-\delta;\  \delta<|y-(j-1)\pi|<\epsilon-\delta\}$$ to be the intersections of $\delta$-interior of $X_\epsilon$ minus the $\delta$-neighborhood of the median lines. We also take
$$Y^2_\delta=\{(x,y)\in X\ |\ \exists\ j\in \N \text{ odd},\ s.t.\ \delta<|x-j\pi|<\epsilon-\delta;\ \delta<|y-j\pi|<\epsilon-\delta\}$$ to be the intersections of $\delta$-interior of $X_\epsilon$ minus the $\delta$-neighborhood of the median lines as shown in Figure \ref{Ydelta}.

\begin{figure}[ht]
	\begin{center}
		\includegraphics[width=0.6\textwidth]{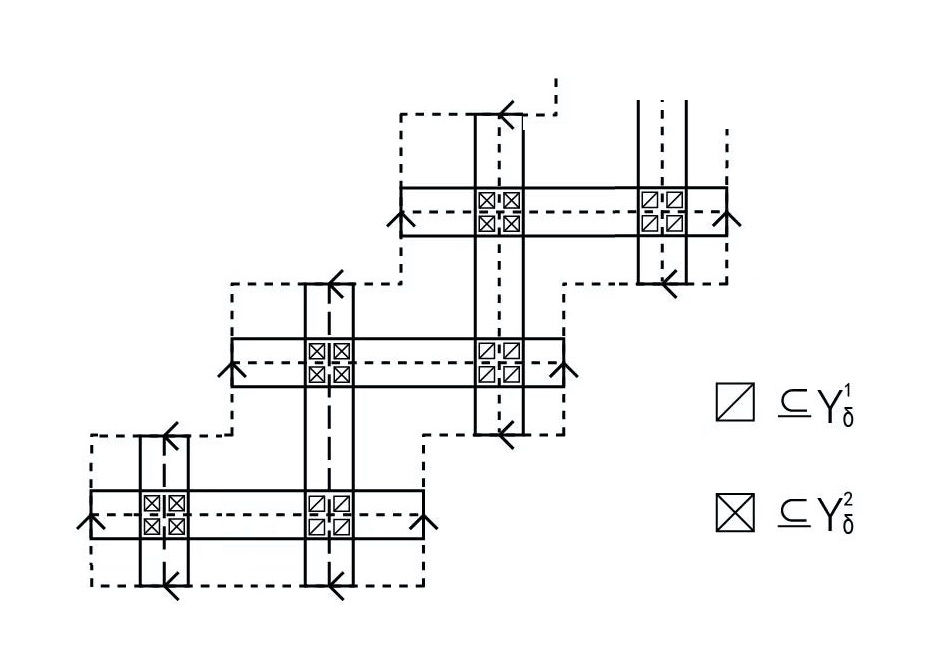}
	\end{center}
		\caption{$Y_\delta^1$ and $Y_\delta^2$ in $X$}\label{Ydelta}
\end{figure}

We denote the set of points returning sufficiently often to $Y^1_\delta$ by $$A_\delta=\{p\in X\ |\ \liminf_{N\to \infty}\frac{1}{N}\sum_{k=1}^N 1_{Y^1_\delta}(\mathcal{T}^k p)\ge \delta \},$$ and the set of points returning sufficiently often to $Y^2_\delta$ by $$B_\delta=\{p\in X\ |\ \liminf_{N\to \infty}\frac{1}{N}\sum_{k=1}^N 1_{Y^2_\delta}(\mathcal{T}^k p)\ge \delta \}.$$

We first show that $\mathcal{T}$ is hyperbolic on $A_\delta$ and $B_\delta$.
\begin{lem}\label{lya} Given $\delta>0$, there exists $c_\delta$ such that for any $p\in A_\delta\cup B_\delta$, the Lyapunov exponent $\chi^+(p)\ge c_\delta$.
\end{lem}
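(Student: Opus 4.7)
Proof plan:

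The plan is a cone-field argument on $TY$ that turns the visit-frequency assumption defining $A_\delta$ and $B_\delta$ into a multiplicative lower bound on $\|D\mathcal{T}^N v\|$ for vectors $v$ in an invariant unstable cone. First I would record the derivatives: by Lemma \ref{mLTM} each $DT_i^{k_i}$ is a shear of the plane, namely the upper-triangular matrix
\begin{equation*}
\begin{pmatrix} 1 & k_i r'(y-i\pi)\\ 0 & 1\end{pmatrix}
\end{equation*}
for $i$ odd and the lower-triangular matrix with $(2,1)$-entry $-k_i r'(x-(i+1)\pi)$ for $i$ even, with the arguments evaluated at the appropriate intermediate orbit point. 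The hypotheses on $r$ in Definition \ref{symdeh} imply $r'$ is even and of constant sign on its support $(-\epsilon,0)\cup(0,\epsilon)$. Combined with the standing assumption $k_i k_{i+1}<0$, the nonzero shear coefficients appearing in the full product $D\mathcal{T}(p)=\prod_i DT_i^{k_i}(p_i)$ all share one common sign along the entire orbit.

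I would then introduce an invariant cone field adapted to this sign. After fixing the sign convention so that every shear coefficient is non-negative, the quadrant cone $C^u_p=\{(u,v)\in T_p Y : uv\ge 0\}$ is preserved by every horizontal shear $(u,v)\mapsto(u+\alpha v,v)$ with $\alpha\ge 0$ and every vertical shear $(u,v)\mapsto(u,v+\beta u)$ with $\beta\ge 0$; a direct computation further gives $\|H_\alpha v\|\ge\|v\|$ and $\|V_\beta v\|\ge\|v\|$ for every $v\in C^u$. Hence $C^u$ is $D\mathcal{T}$-invariant and norms never decrease along $C^u$, irrespective of what happens at iterates outside the good set.

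The quantitative ingredient comes from the good set. When the orbit visits $Y^1_\delta$, both arguments $x-j\pi$ and $y-(j-1)\pi$ lie in $(-(\epsilon-\delta),-\delta)\cup(\delta,\epsilon-\delta)$, so by smoothness of $r$ on this compact set there is $c_\delta>0$ with $|r'|\ge c_\delta$ there. At such an iterate the two adjacent shear factors in $D\mathcal{T}$ corresponding to the twist pair straddling the band intersection combine to a hyperbolic matrix of the form $\bigl(\begin{smallmatrix}1&\alpha\\\beta&1+\alpha\beta\end{smallmatrix}\bigr)$ with $\alpha\beta\ge c_\delta^2\min_i k_i^2$ and trace $2+\alpha\beta$. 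A short eigenvalue or ray-rotation calculation produces a constant $\eta_\delta>0$ such that $\|Mv\|\ge(1+\eta_\delta)\|v\|$ for every $v\in C^u$; the same estimate, with the roles of the horizontal and vertical bands swapped, handles visits to $Y^2_\delta$.

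Finally I would assemble the lower bound on the Lyapunov exponent. For $p\in A_\delta$ the definition guarantees that at least $\delta N$ of the first $N$ iterates $\mathcal{T}^k p$ lie in $Y^1_\delta$ for all sufficiently large $N$; combining cone invariance and non-contracting shears at all iterates with a stretch factor $1+\eta_\delta$ at each good iterate gives
\begin{equation*}
\|D\mathcal{T}^N(p)v\|\ge (1+\eta_\delta)^{\delta N}\|v\|,\qquad v\in C^u_p,
\end{equation*}
and therefore $\chi^+(p)\ge\delta\log(1+\eta_\delta)=:c_\delta>0$; the case $p\in B_\delta$ is identical after swapping $Y^1_\delta$ with $Y^2_\delta$. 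The main obstacle is the coherent choice of an invariant cone field on all of $Y$: one needs the shear coefficients encountered along any orbit to share a common sign, so that a single globally invariant cone is available. This is exactly where $k_i k_{i+1}<0$ is used, together with the constant sign of $r'$ on its support; without this sign coherence the parabolic shears at bad iterates could rotate cones in conflicting directions and cancel the expansion accumulated during visits to $Y^1_\delta\cup Y^2_\delta$.
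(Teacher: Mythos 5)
Your overall strategy is close to the paper's: write $D\mathcal{T}^N$ as a product of parabolic shears, observe that the sign of $r'$ and the alternating-sign hypothesis $k_ik_{i+1}<0$ force all nonzero shear coefficients to share a common sign, protect a quadrant cone under all shears, and harvest genuine expansion at the iterates landing in $Y^1_\delta\cup Y^2_\delta$. That part matches the paper's use of the partial order $u\le v$ on $\mathbb{R}^2$ and the first-quadrant cone.

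The gap is in the sentence ``At such an iterate the two adjacent shear factors in $D\mathcal{T}$ corresponding to the twist pair straddling the band intersection combine to a hyperbolic matrix $\bigl(\begin{smallmatrix}1&\alpha\\\beta&1+\alpha\beta\end{smallmatrix}\bigr)$ with $\alpha\beta\ge c_\delta^2\min_i k_i^2$.'' This asserts that at a single visit of the orbit to $Y^1_\delta$ \emph{both} adjacent shear coefficients are bounded away from zero. But $\mathcal{T}=T_1^{k_1}\cdots T_m^{k_m}$ is applied in a fixed order, and only the \emph{first} nontrivial shear acting on the orbit point $q\in Y^1_\delta$ has its argument controlled by the assumption that $q$ lies $\delta$-deep in the band intersection. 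That first shear (vertical, say) changes the $y$-coordinate of the point by $-k_jr(\cdot)$, a quantity ranging over a full period; the very next (horizontal) shear then sees an argument $y-\cdot\pi$ that is no longer under control, and its coefficient can vanish. So $\alpha\beta$ has no lower bound coming from a single visit. What the paper actually proves is weaker but sufficient: at each return time $n_k$ to the good set, the first nontrivial shear $t^{n_k}_{2j(n_k)}$ is $\ge c_1$, and looking \emph{backward from the next return time} $n_{k+1}$, the last nontrivial shear $t^{n_{k+1}-1}_{2j(n_{k+1})-1}$ is $\ge c_1$; these are one vertical and one horizontal coefficient, at possibly different iterates. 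Since every intermediate shear is of the same sign, the ordered product between consecutive returns dominates $\bigl(\begin{smallmatrix}1&0\\c_1&1\end{smallmatrix}\bigr)\bigl(\begin{smallmatrix}1&c_1\\0&1\end{smallmatrix}\bigr)$ in the cone order, and iterating over the $[\delta n/2]$ returns yields the claimed exponential growth. You would need to replace your single-iterate hyperbolic block by this two-sided return-time bookkeeping to make the proof go through.
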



\begin{lem}\label{fullm}Let $\mu$ be the Lebesgue probability measure on $X$, we have
$\mu(\cup_{\delta>0}(A_\delta\cup B_\delta))=1$.
\end{lem}
We postpone the proofs to Section \ref{sub:lem}.

Theorem \ref{hyp} comes as a direct corollary of the above lemmata. Furthermore, these lemmata give an alternative proof of Theorem \ref{mtwist}.
\begin{proof}[Alternative proof of Theorem \ref{mtwist}]
By Lemma~\ref{fullm} we have $\mu(\cup_{\delta>0}(A_\delta\cup B_\delta))=1$, thus there exists $\delta_0>0$ for which $\mu(A_{\delta_0}\cup B_{\delta_0})>0$. By Lemma \ref{lya}, there exists $c>0$ such that $\chi^+(x)>c$ for any $x\in A_{\delta_0}\cup B_{\delta_0}$. Thus by Pesin entropy formula we have $h_{\mu}(\mathcal{T})\ge c\mu( A_{\delta_0}\cup B_{\delta_0})>0$, so by the variational principle we have $$h_{top}(\tau)\ge h_{top}(\tau\mid X)= h_{top}(\mathcal{T} )\ge h_{\mu}(\mathcal{T})>0.$$
\end{proof}

\subsection{Local stable/unstable Lagrangian manifolds}\label{SFoliation}

As a corollary of the non-uniform hyperbolicity of composite Dehn twists on surfaces established in Lemma \ref{lya} and Lemma \ref{fullm}, in this section, we  prove the existence of local stable/unstable Lagrangian manifolds using Pesin's stable manifold theorem and a rotation construction. We start with the definition of rotation map that brings curves from $X=A_m^1$ back to Lagrangian submanifolds in $A_m^n.$ In this section, we use the standard identification of $A_m^n$ with the plumbing space of $m$ cotangent spaces of $n$-dimensional spheres $P_\psi(S_1,S_2,...,S_n)$ and perform the rotation map on the plumbing space.

\begin{proof}[Proof of Theorem \ref{foli}]
The group $\mathrm{SO}(n)$ acts on the plumbing space via its standard action on the sphere $\mathbb{S}^{n-1}$, more precisely, it is given by $\mathrm{SO}(n)\times P_\psi(S_1,\ldots, S_n)\to P_\psi(S_1,\ldots, S_n): (g,p)\mapsto \mathcal{R}_{g(1,0,\ldots,0)}p$. We take $N$ to be the $\epsilon$-neighborhood of $\cup S_i$ in the plumbing space.  { Let $X_\epsilon$ be the $\epsilon$-neighborhood of $\cup_{i=1}^m \gamma_i$ in $P_\psi(\gamma_1,\gamma_2,\cdots,\gamma_m)$},

By Theorem \ref{hyp}, we see from Pesin's stable manifold theorem (see~Theorem~\ref{thm:Pesin}) that  {$\tau$ restricted to $X_\epsilon$} admits stable and unstable curves $W^s(p)$ and $W^u(p)$ for almost every $p\in X_\epsilon$. Let $\Lambda$ be the set of points $p\in X_\epsilon$ that is hyperbolic under $\tau$. Since the stable curves are given by the Dehn twists which is commutative with the rotation map $\mathcal{R}_\theta$, we see that $\Lambda$ is invariant under the map $\bar{\mathcal{R}}$ and the family of stable curves $W^s(p)$ is  {invariant} under $\bar{\mathcal{R}}$, i.e. $\{W^s(p)\ |\ p\in \Lambda\}=\{\bar{\mathcal{R}}(W^s(p))\ |\ p\in \Lambda\}$. Thus by Lemma \ref{lem:smlg} we define a family of invariant Lagrangian submanifolds $\cF^s=\{L_{W^s(p)}\ |\ p\in \Lambda\}$. Similarly, the family of unstable curves $W^u(p)$ in $X_\epsilon$ given by the inverse map also gives us a family of invariant Lagrangian submanifolds  {$\cF^u=\{L_{W^u(p)}\ |\ p\in \Lambda\}$.}
\end{proof}

\subsection{Proofs of  Lemmata \ref{lya}--\ref{fullm}}\label{sub:lem}
In this section, we give proofs of Lemmata \ref{lya}--\ref{fullm}.

\begin{proof}[Proof of Lemma \ref{lya}]
In this proof we take $\prod$ to be the ordered product which is read from right to left, and
we denote by $1_\N$ the indicator function that takes value $1$ on $\N$ and 0 elsewhere.
	For any $n\in \N$ and any $p\in A_\delta\cup B_\delta$, we have $$D\mathcal{T}^n(p)=\prod_{i=1}^n\left(\begin{pmatrix}1 &t_1^i\\0&1\end{pmatrix}\begin{pmatrix}1 &0\\t_2^i&1\end{pmatrix}...\begin{pmatrix}1 &1_\N((m+1)/2)\cdot t_m^i\\1_\N(m/2)\cdot t_m^i &1\end{pmatrix}\right)$$
where
	$$t^i_j=\left\{\begin{array}{l r}k_jr^\prime(y(\prod_{\ell=m}^{j+1}T_\ell^{k_\ell}\mathcal{T}^{i-1}p)-j\pi) &j\text{ odd}\\ k_jr^\prime(x(\prod_{\ell=m}^{j+1}T_\ell^{k_\ell}\mathcal{T}^{i-1}p)-(j+1)\pi) &j\text{ even}\end{array}\right. .$$ Thus we have $t^i_j\ge 0$ for any $1\le j\le m,1\le i\le n$.
	
	For $p\in A_\delta\cup B_\delta$, assume without loss of generality that $p\in A_\delta$, then for any $n$ sufficiently large, we have $\#\{\{\mathcal{T}^i(p)\ |\ 0\le i\le n\}\cap A_\delta\}\ge \delta n/2$. We number the iterations when $p$ hits $A_\delta$ by $\{n_k\}_{k\in \N}=\{n\in \N\ |\  \mathcal{T}^n(p)\in A_\delta\}$ where $n_i< n_{i+1}$. Then for any $k\in \N$, $\mathcal{T}^{n_k}(p)$ and $\mathcal{T}^{n_{k+1}}(p)$ are both in $A_\delta$. Suppose $\mathcal{T}^{n_k}(p)\in A_{j(n_k)}\cap B_{j(n_k)}$, $\mathcal{T}^{n_{k+1}}(p)\in A_{j(n_{k+1})}\cap B_{j(n_{k+1})}$ then we claim that $$t_{2j(n_k)}^{n_k}>c_1\text{ and } t_{2j(n_{k+1})-1}^{n_{k+1}-1}>c_1,$$  where the constant $c_1>0$ is taken so that $r^\prime(t)>c_1,\forall\ \delta \le |t|\le \epsilon-\delta$ and $c_1$ depends only on $r$ and $\delta$.
	
	To prove the claim we just note that if $q=\mathcal{T}^{n_k}(p)$ starts from $A_\delta\cap A_{j(n_k)}\cap B_{j(n_k)}$, $T_j$ acts by identity on $q$ for $j> 2j(n_k)$ and so $t_{2j(n_k)}^{n_k}>k_{2j(n_k)}c_1$. Similarly, whenever $q=\mathcal{T}^{n_{k+1}}$ lands in $A_\delta\cap A_{j(n_{k+1})}\cap B_{j(n_{k+1})}$, we see that $T_j^{-1}(q)=q$ for $j<2j(n_{k+1})-1$ which ensures that $t_{2j(n_{k+1})-1}^{n_{k+1}-1}>k_{2j(n_{k+1})-1}c_1$.

Thus we have that for $p\in A_\delta$, it holds that $\#\{1\le i\le n \ | \  t^i_j>c_1 \text{ for some } j\}\ge \delta n/2$ for $n$ sufficiently large. The case $p\in B_\delta$ can be proved in the same way.
	
	We next introduce a partial ordering $\le$ on $\R^2$ given by $u=(u_1,u_2)\le v=(v_1,v_2)$ if $u_1\le v_1,u_2\le v_2$. We note that $T_j^{k_i}u\le T_j^{k_j}v$ if $u\le v$ and $T_j^{k_i}u\ge u$ if $u\ge 0$. Also, for $u\ge 0$, $t^i_j>c_1$, we have $$\begin{pmatrix}1 &1_\N((j+1)/2) \cdot t_j^i\\1_\N(j/2) \cdot t_j^i &1\end{pmatrix} u\ge\begin{pmatrix}1 &1_\N((j+1)/2)\cdot c_1\\1_\N(j/2)\cdot c_1&1\end{pmatrix} u.$$ Thus by our claim above, for $u$ in the first quadrant, we have $$D\mathcal{T}^n(p)[u]\ge \left(\begin{pmatrix}1 &0\\c_1&1\end{pmatrix}\begin{pmatrix}1 &c_1\\0&1\end{pmatrix}\right)^{[\delta n/2]}u=\begin{pmatrix}1 &c_1\\c_1&1+c_1^2\end{pmatrix}^{[\delta n/2]} u.$$ This implies that
	$$\big\|D\mathcal{T}^n(p)[u]\big\|\ge \left\|\begin{pmatrix}1 &c_1\\c_1&1+c_1^2\end{pmatrix}^{[\delta n/2]} u\right\|\ge c_2 \lambda^{[\delta n/2]}\|u\|$$ for a constant $c_2$ depending on $u$ and $c_1$, where $\lambda$ is the larger eigenvalue of $\begin{pmatrix}1 &c_1\\c_1&1+c_1^2\end{pmatrix}$. Thus we have $\chi^+(p,u)\ge \lambda^{\delta/2}=c_\delta$ for any $u\ge 0$. So we get our desired result $\chi^+(p)\ge c_\delta>0$.
\end{proof}

The following lemma by Burton and Easton shows that almost every point that hits a subset of $X$ must hit the subset sufficiently often (which can be viewed as a generalization of Poincar\'e recurrence theorem).
\begin{lem}[Burton and Easton, \cite{bur}]\label{rec}Let $(F,X,\mu)$ be a measure preserving dynamical system, and $V$ be a subset of $X$. Suppose $$P(V)=\{p\in X\ |\  \exists\ N\in \N, \ s.t.\ F^N(p)\in V\}$$ is the set of points that hits $V$, $$Z(V)=\{p\in X\ |\  \liminf_{N\to \infty}\frac{1}{N}\sum_{k=0}^N 1_{V}(F^N(p))=0\}$$ is the set of points that do not hit $V$ at a positive average rate, then we have $\mu(P(V)\cap Z(V))=0$.
\end{lem}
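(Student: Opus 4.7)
The plan is to invoke Birkhoff's pointwise ergodic theorem and then exploit the invariance of the limiting time average. Applying Birkhoff to $1_V \in L^1(X,\mu)$ gives that for $\mu$-a.e. $x$ the limit
$$A(x) := \lim_{N\to\infty}\frac{1}{N}\sum_{k=0}^{N-1} 1_V(F^k x)$$
exists, is $F$-invariant, and equals $\mathbb{E}[1_V \mid \mathcal{I}](x)$, where $\mathcal{I}$ is the $\sigma$-algebra of $F$-invariant sets. Modulo a null set, $Z(V) = \{x : A(x) = 0\}$; in particular $Z(V)$ is $F$-invariant. This invariance can also be verified directly from the liminf definition via the elementary shift identity
$$\frac{1}{N}\sum_{k=0}^{N-1} 1_V(F^{k+1}x) \;=\; \frac{N+1}{N}\cdot \frac{1}{N+1}\sum_{j=0}^{N} 1_V(F^j x) - \frac{1_V(x)}{N},$$
which shows $\liminf_N \tfrac{1}{N}\sum_{k=0}^{N-1} 1_V(F^k(Fx)) = \liminf_N \tfrac{1}{N}\sum_{k=0}^{N-1} 1_V(F^k x)$.

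Next, I would show $\mu(V \cap Z(V)) = 0$. Since $Z(V)$ is an invariant set, the defining property of conditional expectation yields
$$\mu(V \cap Z(V)) \;=\; \int_{Z(V)} 1_V \, d\mu \;=\; \int_{Z(V)} A \, d\mu \;=\; 0,$$
the last equality because $A \equiv 0$ on $Z(V)$.

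Finally, suppose $x \in P(V) \cap Z(V)$; then $F^N x \in V$ for some $N \ge 0$, and by invariance of $Z(V)$ also $F^N x \in Z(V)$, so $F^N x \in V \cap Z(V)$. Hence
$$P(V) \cap Z(V) \;\subset\; \bigcup_{N \ge 0} F^{-N}(V \cap Z(V)),$$
and by measure preservation each term on the right has measure $\mu(V \cap Z(V)) = 0$, so the countable union does too.

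There is no substantive obstacle once Birkhoff is in hand; the content of the lemma is exactly the tautology that the invariant set $\{A = 0\}$ cannot carry mass on $V$, since $\int_{\{A=0\}} 1_V\, d\mu = \int_{\{A=0\}} A\, d\mu = 0$. The only place where one must be mildly careful is in reading ``invariant'' as ``invariant modulo a null set,'' which is harmless throughout.
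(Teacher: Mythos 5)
Your proof is correct and follows essentially the same route as the paper's: establish $F$-invariance of $Z(V)$, deduce $\mu(V\cap Z(V))=0$ by integrating $1_V$ over $Z(V)$, and then spread this to $P(V)\cap Z(V)$ via measure preservation. The only difference is that you make explicit the appeal to Birkhoff's ergodic theorem (through the conditional expectation identity) to justify $\int_{Z(V)}1_V\,d\mu=0$, whereas the paper averages the equalities $\int_{Z(V)}1_V\,d\mu=\int_{Z(V)}1_V\circ F^N\,d\mu$ over $N$ and passes to the limit tacitly; your version is the careful reading of theirs.
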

\begin{proof}
	By definition $Z(V)$ is an $F$-invariant set of $X$. Thus we have $$\int_{Z(V)}1_{V}(x)d\mu(x)=\int_{Z(V)}  1_V(F^N(x))d\mu(x)=$$$$\int_{Z(V)}\liminf_{N\to \infty}\frac{1}{N}\sum_{k=0}^N 1_{V}(F^N(x))d\mu(x)=0.$$ So we get $\mu(V\cap Z(V))=0$. Furthermore, we have $\mathcal{T}^{-k}(V\cap Z(V))=\mathcal{T}^{-k}(V)\cap Z(V)$ for any $k\in \N$. This implies that $\mu(P(V)\cap Z(V))=\mu(\cup_{k\in \N}(\mathcal{T}^{-k}(V)\cap Z(V)))=0$.
\end{proof}

We next show that indeed almost every point in $X$ hits $Y^1_0\cup Y^2_0$ eventually.
\begin{lem}\label{hit} For Lebesgue almost every point $p$ in $X$, there exists $N\in \N$ such that $\mathcal{T}^N(p)\in Y^1_0\cup Y^2_0$.
\end{lem}
\begin{proof}
	We notice that $$Y^1_0\cup Y^2_0=\cup_{i\neq j} (Y_i\cap Y_j)-\{(x,y)\ |\ x\in \N\text{ or }y\in\N \}.$$ First we note that the set of points ever hitting $\{(x,y)\ |\ x\in \N\text{ or }y\in\N \}$ has zero Lebesgue measure since $vol(\{(x,y)\ |\ x\in \N\text{ or }y\in\N \})=0$. Thus we only need to consider points that never hits $\cup_{i\neq j} (Y_i\cap Y_j)$.
	
	If $\mathcal{T}^n(p)\notin \cup_{i\neq j} (Y_i\cap Y_j), \forall n\in \N$, then suppose $p\in Y_i$, then any iteration $\mathcal{T}^n(p)$ is in $Y_i\backslash \cup_{j\neq i}Y_j$. $\mathcal{T}$ restricted to $Y_i\backslash \cup_{j\neq i}Y_j$ is simply $T_i^{k_i}$ which is a single Dehn twist. Thus for $\mathcal{T}^n(p)\in Y_i\backslash \cup_{j\neq i}Y_j,\forall n$, the point $p$ has to be periodic with $|\mathcal{T}(p)-p|>\epsilon$. Thus we see that such points must lie on finitely many 1-dimensional segments, so they make up a measure zero subset of $X$.
\end{proof}

Now we finish the proof the the lemmas.
\begin{proof}[Proof of Lemma \ref{fullm}]
	We only need to prove that $\mu(\cup_{n\in \N}(A_{\frac{1}{n}}\cup B_{\frac{1}{n}}))=1$.
	
	We follow the notations in Lemma \ref{hit}. By Lemma \ref{hit}, since $\cup_{\delta>0}Y^i_\delta=Y^i_0,i=1,2$, for almost every $p\in X$, there exists $n>0,N\in \N$ such that $\mathcal{T}^N(p)\in Y^1_{\frac{1}{n}}\cup Y^2_{\frac{1}{n}}$, i.e. $$\mu \big( \cup_{n\in \N}( P(Y^1_{\frac{1}{n}}\big)\cup P(Y^2_{\frac{1}{n}})))=1.$$ By Lemma \ref{rec}, we have $\mu(P(Y^i_\delta)\cap Z(Y^i_\delta))=0,\forall \delta$ which implies
	$$\mu\left(\bigcup_{m\in \N}\left(P(Y^i_{\frac{1}{n}})\bigcap \left\{p\in X\ |\  \liminf_{N\to \infty}\frac{1}{N}\sum_{k=0}^N 1_{Y^i_\frac{1}{n}}(F^N(p))>\frac{1}{m} \right\}\right)\right)=\mu(P(Y^i_{\frac{1}{n}})),$$ for $i=1,2.$
	Now since $Y^i_{\frac{1}{n}}\subset Y^i_{\frac{1}{m}}$ for $n<m$, the last equation implies that $$\mu\big(P(Y^1_{\frac{1}{n}})\cap(\cup_{m\in \N} A_{\frac{1}{m}})\big)=\mu(P(Y^1_{\frac{1}{n}})),\quad \mu(P(Y^2_{\frac{1}{n}})\cap(\cup_{m\in \N} B_{\frac{1}{m}}))=\mu(P(Y^2_{\frac{1}{n}}))$$ for any $n\in \N$. Combining this with $P(Y^i_{\frac{1}{n}})\subset  P(Y^i_{\frac{1}{m}})$ for any $n<m$ and $\mu( \cup_{n\in \N}( P(Y^1_{\frac{1}{n}})\cup P(Y^2_{\frac{1}{n}})))=1$, we have

	\begin{equation*}
		\begin{aligned}
			\mu\big(\cup_{n\in \N} (A_{\frac{1}{n}}\cup B_{\frac{1}{n}})\big)&= \mu\left(\big(\cup_{m\in \N} (A_{\frac{1}{m}}\cup B_{\frac{1}{m}})\big)\bigcap \big(\cup_{n\in \N} (P(Y^1_{\frac{1}{n}})\cup P(Y^2_{\frac{1}{n}}))\big)\right)\\
			&\ge \mu\left(\cup_{n\in \N}\big(P(Y^1_{\frac{1}{n}})\cap(\cup_{m\in \N} A_{\frac{1}{m}})\big)\bigcup \big(\cup_{n\in \N}(P(Y^2_{\frac{1}{n}})\cap(\cup_{m\in \N} B_{\frac{1}{m}}))\big)\right)\\
			&=\mu\left(\cup_{n\in \N}\big(P(Y^1_{\frac{1}{n}})\cup P(Y^2_{\frac{1}{n}})\big)\right)=1.		
		\end{aligned}
	\end{equation*}
	This completes the proof.
\end{proof}


\section{The Growth of Floer cohomology groups}\label{sec:invo}

In this section, we explore the symplectic aspect of the composite Dehn twists. As we have explained in the introduction, we are interested in the growth of Floer cohomology group. For a brief introduction to Lagrangian Floer theory, we refer readers to Appendix \ref{apd:fl}.

In order to estimate the growth of Lagrangian Floer cohomology groups,  we shall employ a result due to Khovanov and Seidel~\cite{KS}. 

Let $(M,\omega=d\lambda)$ be an exact symplectic manifold with contact boundary $(\partial M, \lambda)$. We assume in addition that $M$ admits an involution, i.e., $\iota:M\to M$ with $\iota\circ\iota=Id$ and $\iota^*\omega=\omega$ and $\iota^*\lambda=\lambda$. Clearly, the fixed point set $M^\iota$ is a symplectic submanifold of $M$. Moreover, when $L$ is a Lagrangian submanifold of $M$, its fixed part $L^\iota=L\cap M^\iota$ is again a Lagrangian submanifold of $M^\iota$.

\begin{thm}[{Khovanov and Seidel~\cite[Proposition~5.15]{KS}}]\label{thm:KS}
	Let $(S_1,S_2)$ be a pair of closed $\lambda$-exact Lagrangian submanifolds of $(M,\omega)$ with $\iota(S_i)=S_i,i=1,2$. Suppose that
	\begin{itemize}
		\item[(C1)]the intersection $C=S_1\cap S_2$ is clean\footnote{Here ``clean intersection" means that $C=S_1\cap S_2$ is a smooth manifold and $TC=TS_1|_C\cap TS_2|_C$.}, and there is an $\iota$-invariant Morse function $f$ on $C$ such that its critical points are precisely the points of $C^\iota=C\cap M^\iota$;
		\item [(C2)] there is no continuous map $u:\ [0,1]\times[0,1]\to M^\iota$ such that $u(0,t)=x$ and $u(1,t)=y$ for all $t$, and $u(s,0)\in S_1$ and $u(s,1)\in S_2$ for all $s$, where $x$ and $y$ are two different points of $S_1\cap S_2$.
	\end{itemize}
	Then $\rk\; \HF(S_1,S_2)=\rk\; H^*(C,\mathbb{Z}/2)=|C^\iota|$.
	
\end{thm}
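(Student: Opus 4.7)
The plan is to realize both $\rk\,\HF(S_1,S_2)$ and $\rk\,H^*(C,\mathbb{Z}/2)$ as the homology of chain complexes whose generators are in bijection with $C^\iota$, and then to use the involution $\iota$ to force both differentials to vanish mod $2$.

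First, I would exploit the clean intersection hypothesis (C1) via a Morse--Bott setup in the style of Po\'zniak's thesis. After choosing an $\iota$-invariant Morse--Bott Hamiltonian perturbation built from the $\iota$-invariant Morse function $f$ on $C$, the Floer chain complex $\CF(S_1,S_2;\Z/2)$ acquires one generator for each critical point of $f$, i.e.\ one generator per element of $C^\iota$. This immediately gives $\rk\,\CF(S_1,S_2)=|C^\iota|$ and the upper bound $\rk\,\HF(S_1,S_2)\le |C^\iota|$.

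Second, to obtain the lower bound I would choose all auxiliary Floer data $\iota$-equivariantly: an $\iota$-invariant $\omega$-tame almost complex structure $J$, an $\iota$-invariant Hamiltonian perturbation, and an $\iota$-invariant Banach space of allowable perturbations. Then $\iota$ induces an involution on the moduli spaces of pseudo-holomorphic strips connecting two distinct generators $x,y\in C^\iota$. Strips not fixed by $\iota$ come in orbits of size two and cancel in the count modulo $2$. A strip fixed by $\iota$ must have image in $M^\iota$ with boundary on $S_1^\iota$ and $S_2^\iota$ connecting the distinct points $x,y$; this is precisely the continuous map ruled out by (C2). Consequently the Floer differential vanishes mod $2$ and $\HF(S_1,S_2;\Z/2)\cong \CF(S_1,S_2;\Z/2)$ has rank $|C^\iota|$.

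The identity $\rk\,H^*(C,\Z/2)=|C^\iota|$ is proved by the same cancellation principle at the Morse level: the Morse complex of $f$ on $C$ with an $\iota$-invariant metric has $|C^\iota|$ generators, and any $\iota$-invariant gradient trajectory between two critical points would have to lie in the discrete set $C^\iota$, hence be constant, which is impossible between distinct critical points. The remaining trajectories appear in $\iota$-pairs and cancel over $\Z/2$, so the Morse differential vanishes mod $2$.

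The main technical obstacle will be $\iota$-equivariant transversality: a generic $\iota$-invariant almost complex structure or Hamiltonian need not be regular for every component of the strip moduli space, so I would need either to verify that generic equivariant data already achieve regularity on the $0$- and $1$-dimensional components entering the differential, or to appeal to a Kuranishi/virtual perturbation framework carried out $\Z/2$-equivariantly. A related subtlety is that in order for an $\iota$-fixed strip to really land in $M^\iota$ one needs $J$ to be $\iota$-invariant so that $M^\iota\subset M$ is an almost-complex submanifold; it is exactly here that hypothesis (C2) is invoked to close the argument.
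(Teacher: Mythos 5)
Your proposal is correct and matches the approach that the paper attributes to Khovanov and Seidel: an $\iota$-equivariant Po\'zniak/Morse--Bott model whose generators are indexed by $C^\iota$, with the mod-$2$ differential killed by pairing strips via $\iota$ and using (C2) to exclude $\iota$-fixed strips, and equivariant transversality handled precisely because such fixed strips do not exist. The paper itself does not reproduce the argument but cites \cite{KS} for these details, so your sketch is essentially the same proof.
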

The proof of this theorem is based on equivariant transversality and a symmetry argument, for which we refer readers to \cite{KS} for details.
It is important to note that the involution  plays the  role of reducing the dimension of the symplectic manifold and Lagrangian submanifold for the purpose of calculating the rank of Lagrangian Floer cohomology, similar to what we did in the last section when proving positive topological entropy.

\begin{rmk}
As mentioned before, the $A^n_m$-configuration can be seen as a completion of plumbing domain $P_\psi(S_1,\ldots,S_m)$ of unit disk cotangent bundles of spheres. According to Appendix~\ref{apd:fl}, for Lagrangian spheres $L_1,L_2$ in $A^n_m$ one can define the Lagrangian Floer cohomology as 
\[
\HF^{A^n_m}(L_1,L_2)=\HF(L_1,L_2;H,J)
\]
for any Hamiltonian function $H\in C_c^\infty([0,1]\times A^n_m)$ such that $\phi^1_H(L_1)$ intersects $L_2$ transversely and any generic compatible almost complex structure $J$ on $A^n_m$ which is of contact type at infinity. 
Since those Lagrangian spheres involved in Theorems~\ref{thm:class} and \ref{thm:symrate} (resp. \ref{thm:symrate2}) are contained in the interior of a Liouville domain $W$ inside plumbing domain  $P_\psi(S_1,S_2)$ (resp. $P_\psi(S_1,\ldots,S_m)$), in the above definition of $\HF^{A^n_m}(L_1,L_2)$ one can choose a Hamiltonian function $H$ on $A^n_m$ with $\cup_{t\in [0,1]}{\rm supp}(H_t)\subset W\setminus\partial W$. As a result, we have
\[
\HF^{A^n_m}(L_1,L_2)=\HF^{\widehat{W}}(L_1,L_2)
\]
where $\widehat{W}$ is a symplectic completion of $W$.  For this reason, we do not distinguish the notations between  $\HF^{A^n_m}(L_1,L_2)$ and $\HF^{\widehat{W}}(L_1,L_2)$, and use $\HF(L_1,L_2)$ to stand for them. Note that by our choice of Liouville domain, $W$ admits a natural involution as defined in Section~\ref{subsec:inv}, one can use Theorem~\ref{thm:KS} to compute $\HF(L_1,L_2)$.

\end{rmk}

\subsection{Proof of Theorem~\ref{thm:symrate}}\label{subsec:proof}
In this section, we give the proof of Theorem~\ref{thm:symrate}. In order to apply Theorem \ref{thm:KS}, we need to verify the conditions (C1) and (C2). In the following, for any $k,\ell\in\Z$ we denote $\tau=\tau_1^k\tau_2^\ell$ the composition of symplectic Dehn twists.

\begin{lem}\label{LmClean}
	 {Let $\gamma_1,\gamma_2$ be the set fixed by the involution $\iota$ in the Lagrangian spheres $S_1$ and $S_2$.} If $\gamma_1$ and $\tau^n(\gamma_2)$ intersect transversely in $P_\psi(\gamma_1,\gamma_2)$, then  $S_1$ and $\tau^n(S_2)$ satisfy condition~$(C1)$ of Theorem~\ref{thm:KS}. 	
\end{lem}


We shall give a proof of this lemma in Section~\ref{subsec:rotmap}. The advantage of  {this proof} is that it allows us to perturb $S_i$ in a smooth exact Lagrangian isotopic way (hence in a Hamiltonian isotopic one) via deforming the generating curves so that the intersections of the resulting closed curves $\gamma_i'$ and $\tau^n(\gamma_j)$ become minimal, see~Section~\ref{SIntersection}.

To achieve the condition~(C2) in Theorem~\ref{thm:KS},  we use the following concept which can be found in the book~\cite{FM}.

\begin{defi}[Geometric intersection number]
	
	Let $\alpha$ and $\beta$ be two free homotopy classes of simple closed curves in a surface $S$. We call the minimal number of intersection points between a representative curve $a$ in the class $\alpha$ and a representative curve $b$ in the class $\beta$ the \emph{geometric intersection number} between $\alpha$ and $\beta$, which we denote by $I(\alpha,\beta)$.
\end{defi}

\begin{lem}\label{LmIntersection}
If  $\gamma_1$ and $\tau^n(\gamma_2)$ are not admissibly isotopic,   we have the following equality
	\begin{equation}\label{e:hf=i}
		\rk\; \HF \big(S_1,\tau^n(S_2)\big)=I\big([\gamma_1],[\tau^n(\gamma_2)]\big).
	\end{equation}
	
\end{lem}
We postpone the proof of the lemma to Section \ref{SIntersection} and complete the proof of Theorem~\ref{thm:symrate}.
\begin{proof}[Proof of Theorem~\ref{thm:symrate}]
	
	Since symplectic Dehn twists $\tau_1$ and $\tau_2$ are supported in a compact neighborhood of the zero section of
	$P_\psi(S_1,S_2)$, as in Section~\ref{topent} we  identify the action of $\tau=\tau^k_2\tau^\ell_1$ on an invariant submanifold $X_\epsilon\subseteq P_\psi(\gamma_1,\gamma_2)$ with the action of $\mathcal{T}=T_2^kT_1^\ell$ on an open subset $X$ of $2$-torus $\mathbb{T}^2$. And the curves $\gamma_1$ and $\gamma_2$ are identified with $\beta_1=\{(s,0)\in \mathbb{T}^2\}$ and $\beta_2=\{(0,t)\in \mathbb{T}^2\}$ respectively.  {We note that $\mathcal{T}$ can be smoothly extended to $\mathbb T^2$ by taking $\mathcal{T}$ to act as identity outside $X$, and the intersection number restricted to $X$ is greater than or equal to the intersection number on the whole torus. This shows that}
	\begin{equation}\label{e:insec}
		 I\big([\gamma_1],[\tau^n(\gamma_2)]\big)=I_{X}\big([\beta_1],[\mathcal{T}^n(\beta_2)]\big)\geq I_{\mathbb{T}^2}\big([\beta_1],[\mathcal{T}^n(\beta_2)]\big).
	\end{equation}
	So by Lemma \ref{LmIntersection}, we have $\rk\; \HF \big(S_1,\tau^n(S_2)\big)\ge I_{\mathbb{T}^2}\big([\beta_1],[\mathcal{T}^n(\beta_2)]\big).$ To prove Theorem~\ref{thm:symrate} we need to estimate the growth rate of the geometric intersection number $I_{\mathbb{T}^2}\big([\beta_1],[\mathcal{T}^n(\beta_2)])$ as $n\to \infty$.
	
	It is well-known that the mapping class group $\hbox{Mod}(\mathbb{T}^2)$ of $\mathbb{T}^2$ is $\mathrm{PSL}_2\Z$, and the nontrivial free homotopy classes of oriented simple closed curves in $\mathbb{T}^2$ are in bijective correspondence with primitive elements of $\mathbb{Z}^2$. Moreover, the geometric intersection number of two such homotopy classes $(a,b),(c,d)\in \mathbb{Z}^2$ can be computed as
	\begin{equation}\label{e:torus}
		I_{\mathbb{T}^2}\big((a,b),(c,d)\big)=|ad-bc|,
	\end{equation}
	see, for instance, \cite[Section~1.2.3]{FM}.
	From Lemma~\ref{LmLTM} we see that $T_1$ and $T_2$, representing  {elements} of $\hbox{Mod}(\mathbb{T}^2)$, have the forms
$
		\left(\begin{array}{cc}
			1 &    1    \\
			0&    1
		\end{array}\right)\quad\hbox{and}\quad
		\left(\begin{array}{cc}
			1 &    0    \\
			-1&    1
		\end{array}\right)
$
	respectively. For $k,\ell\in\mathbb{N}$, the mapping class of $\mathcal{T}=T_2^{k}T_1^\ell$ is given by
	$
	\left(\begin{array}{cc}
		1 -k\ell&    l    \\
		-k&    1
	\end{array}\right)
	$. If $k\ell\neq 0,1,2,3,4$, then   this matrix has a similar diagonalizable matrix $
	\left(\begin{array}{cc}
		\lambda&    0   \\
		0&    \lambda^{-1}
	\end{array}\right)
	$ with the eigenvalue $|\lambda|>1$. In this case, using (\ref{e:torus}) an easy calculation shows that
	\begin{equation}\label{e:numgrowth}
		 \lim_{n\to\infty}\sqrt[n]{I_{\mathbb{T}^2}\big([\beta_1],[\mathcal{T}^n(\beta_2)]\big)}=|\lambda|.
	\end{equation}
	Clearly, for sufficiently large $n$, $\beta_1$ and $\mathcal{T}^n(\beta_2)$ are not admissibly isotopic, and so for $\gamma_1$ and $\tau^n(\gamma_2)$. Then it follows from (\ref{e:hf=i}), (\ref{e:insec}) and (\ref{e:numgrowth}) that
	$$\Gamma(\tau,S_1,S_2)=\liminf_{n\to\infty}\frac{\log\mathrm{rank}\; \HF(S_1,\phi^n(S_2))}{n}\geq \log|\lambda|>0.$$
	The proof of the case of $\tau=\tau_1^k\tau_2^\ell$ is similar, and so we omit it here. This completes the proof of statement~(1).

\noindent {\it Proof of statement (2)}. When $k\ell=0,1,2,3,4$, then we have that the matrix $\left(\begin{array}{cc}
		1 -k\ell&    l    \\
		-k&    1
	\end{array}\right)$ is a periodic or reducible mapping class, then the  growth of Floer cohomology groups is zero.

	\noindent {\it Proof of statement (3)}.
	The claim that $\Gamma(\tau_i^2,S_j,S_j)=0$, $i,j\in\{1,2\}$ with $i\neq j$ follows from the following result immediately.
	\begin{lem}[{\cite[Proposition~4.7]{Ja}}]
		Let $(M^{2n},\omega)$ be a connected Liouville domain  $2c_1(M,\omega)=0$. If $M$ contains an $A_2^n$-configuration of Lagrangian spheres $(S_1,S_2)$, then for any $k\in\mathbb{N}\cup\{0\}$, it holds that
		$$\rk\;\HF(S_i,\tau_j^{2k}(S_i))=2k,\;i,j\in\{1,2\}\;\hbox{with}\;i\neq j.$$
	\end{lem}
	The proof of this lemma is essentially due to Seidel's long exact sequence for Floer cohomology of Dehn twists~\cite[Theorem~1]{Se}, see also Keating~\cite[Section~6]{Ke}. The plumbing space $P(S_1,S_2)$ is a symplectization of Liouville domain with contact-type boundary which satisfies the conditions of the above lemma, see, for instance, \cite[Section~2.3]{Ab}.
	\end{proof}

\subsection{Proof of Theorem~\ref{thm:symrate2}}

The proof of Theorem~\ref{thm:symrate2} is similar to that of Theorem~\ref{thm:symrate}. Let $P_\psi(S_1,S_2,\ldots,S_m)$ and $P_\psi(\gamma_1,\gamma_2,\ldots,\gamma_m)$  be the plumbing domains as given in Section~\ref{sec:subL}.
We need to verify the conditions (C1) and (C2) of Theorem~\ref{thm:KS}. Similar to Lemma \ref{LmIntersection}, we have the following lemma whose proof is postponed to Section \ref{proofmultint}. 

\begin{lem}\label{multiintersect}
 If the composition  $\tau=\tau_1^{k_1}\cdots\tau_m^{k_m}$ of symplectic Dehn twists along spheres $S_i$ satisfies that $\tau^n(\gamma_j)$ is not admissibly isotopic to $\gamma_i$, then we have the equality
\begin{equation}\label{e:mhf=i}
		\rk\; \HF\big(S_i,\tau^n(S_j)\big)=I\big([\gamma_i],[\tau^n(\gamma_j)]\big).
	\end{equation}
\end{lem}
\begin{rmk}
	In the case of dimension two, Dimitrov, Haiden, Katzarkov, and Kontsevich \cite{DHKK} showed that for two non-homotopic curves $\alpha$ and $\beta$ on a closed oriented surface $\Sigma_g$ of genus $g$, we have $\rk \; \HF(\alpha, \tau^n(\beta)) = I([\alpha], [\tau^n(\beta)])$. This result provides a connection between the Thurston classification and a classification of symplectic mapping classes based on the growth rate of Floer cohomology.
	
	However, in higher dimensions, finding analogues of this result has remained a long-standing challenge, as mentioned in \cite[Section 3.2.2]{Sm}. Our Lemma \ref{e:mhf=i} presents a non-trivial case in this direction, making the first attempt towards understanding the classification of symplectic mapping classes and their growth rates in higher dimensions.
	\end{rmk}

To complete the proof of Theorem~\ref{thm:symrate2} the remaining task is to show that $I([\gamma_i],[\tau^n(\gamma_j)])$ grows exponentially as $n\to\infty$.
To do this, we apply some classical results from the construction of pseudo-Anosovs on surfaces, which we stated in Theorems \ref{thm:PA} and \ref{thm:anos}.

\begin{proof}[Proof of Theorem~\ref{thm:symrate2}]
    Since each symplectic Dehn twist $\tau_i$ is supported in a compact neighborhood of the zero section of $T^*S_i$, one can identify the action of $\tau=\tau_1^{k_1}\tau_2^{k_2}\cdots\tau_m^{k_m}$ on an invariant Liouville domain $X_\epsilon\subseteq P_\psi(\gamma_1,\gamma_2,\ldots,\gamma_m)$ with the action of $\mathcal{T}=T_1^{k_1}T_2^{k_2}\cdots T_m^{k_m}$ on an open subset $X$ of a closed surface of genus $g$.  More precisely, for $m=2k,2k+1$ one can embed curves $\gamma_1,\ldots,\gamma_m$ into a surface $S$ of genus $g=k$ with images $\beta_1,\ldots,\beta_m$ as illustrated in Picture~\ref{fig:surface} (when $m=2k$, we do not need $\beta_{2k+1}$), and $X_\epsilon$ is identified with the union of open neighborhoods of simple closed curves $\beta_1,\ldots,\beta_m$ in $S$, and each $\tau_i$ corresponds to the Dehn twist $T_i$ along $\beta_i$. So we have
	\begin{equation}\label{e:minsec}
		 I\big([\gamma_i],[\tau^n(\gamma_j)]\big)=I_{X}\big([\beta_i],[\mathcal{T}^n(\beta_j)]\big)\geq I_{S}\big([\beta_i],[\mathcal{T}^n(\beta_j)]\big).
	\end{equation}
	Notice that by our construction the collection of curves $\beta_1,\ldots,\beta_m$ fills $S$, and that $\beta_i$ and $\beta_{i+2}$, $i=1,\ldots, m-2$ are disjoint. By Theorem~\ref{thm:PA},  if either $k_i>0$ for $i$ odd and $k_i<0$ for $k$ even, or $k_i>0$ for $i$ even and $k_i<0$ for $k$ odd, $\mathcal{T}$ is pseudo-Anosov. Then by Theorem~\ref{thm:anos}, we have
	 $\lim_{n\to\infty}\sqrt[n]{I_{S}\big([\beta_i],[\mathcal{T}^n(\beta_j)]\big)}\ge \lambda.$
	This, together with (\ref{e:mhf=i}) and (\ref{e:minsec}), implies the desired result.
	\end{proof}

\subsection{Proof of Lemma \ref{LmClean}}\label{subsec:rotmap}

Set $M=P_\psi(S_1,S_2)$ and $\tau=\tau^k_2\tau^\ell_1$.  As before,  we identify two open neighborhoods $U_1$ and $U_2$ of the plumbing point $p$ in $T^*S_1$ and $T^*S_2$ by a symplectomorphism $\psi$. Pick a geodesic circle $\gamma_1$ of $S_1$ containing $p$, and let $\gamma_2$ be the unique geodesic circle of $S_2$ passing through $p$  such that two open subsets of $T^*\gamma_1$ and $T^*\gamma_2$ are identified near $p$ under the map $\psi$. 
Let $\mathcal{R}_{\theta}^i:T^*\gamma_i\to M, i=1,2$ be the rotation maps associated to  $S_i$ as described in Section~\ref{sec:subL}.


\begin{lem}\label{lem:dehnrot}
	Let $c_1\subseteq D^*\gamma_1$ and $c_2\subseteq D^*\gamma_2$ be two smooth curves. Then for any $k\in\mathbb{Z}$ and any $\theta\in \mathbb S^{n-1}$ we have
	\begin{equation}\label{lem:tauR}
		\begin{split}
			 \tau_1^k\big(\mathcal{R}_{\theta}^2(c_2)\big)=\mathcal{R}_{\theta}^2\big(\tau_1^k(c_2)\cap T^*\gamma_2\big)\cup_\psi\mathcal{R}_{\theta}^1\big(\tau_1^k(c_2)\cap T^*\gamma_1\big),\\
			 \tau_2^k\big(\mathcal{R}_{\theta}^1(c_1)\big)=\mathcal{R}_{\theta}^1\big(\tau_2^k(c_1)\cap T^*\gamma_1\big)\cup_\psi \mathcal{R}_{\theta}^2\big(\tau_2^k(c_1)\cap T^*\gamma_2\big).
		\end{split}
	\end{equation}

\end{lem}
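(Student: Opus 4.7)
The plan is to verify the first identity pointwise; the second follows by the symmetric argument (swap the roles of $S_1\leftrightarrow S_2$). The three ingredients I would exploit are: (i) the Dehn twist $\tau_1$ is compactly supported in a neighborhood of $S_1$ (Definition~\ref{symdeh}), so in the plumbing space it moves only points lying in $T^*S_1$ together with their images under $\psi$ in the plumbing region of $T^*S_2$; (ii) the plumbing map intertwines the two rotation maps, i.e.\ $\mathcal R_\theta^{2}\circ\psi=\psi\circ\mathcal R_\theta^{1}$ on the plumbing region (Lemma~\ref{rotcom}); and (iii) on $T^*\gamma_1$ the Dehn twist commutes with the rotation, $\tau_1\circ\mathcal R_\theta^{1}=\mathcal R_\theta^{1}\circ\tau_1$ (equation~\eqref{e:tauro}).

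With these in hand I would argue by case analysis on points $q\in c_2\subset T^*\gamma_2$. Write $U_2^-\subset T^*S_2$ for the plumbing region identified with $U_1^+\subset T^*S_1$ via $\psi$. For $q\in c_2\setminus U_2^-$ the point $\mathcal R_\theta^{2}(q)$ stays in $T^*S_2$ away from the support of $\tau_1$, since the rotation preserves the distance to the plumbing point $p$; hence $\tau_1^k(\mathcal R_\theta^{2}(q))=\mathcal R_\theta^{2}(q)$. Simultaneously, viewed in the plumbing space, $\tau_1^k(q)=q\in T^*\gamma_2$, so the contribution to the right-hand side is $\mathcal R_\theta^{2}(q)$ as well. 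For $q\in c_2\cap U_2^-$, set $q'=\psi^{-1}(q)\in T^*\gamma_1\cap U_1^+$; then (ii) gives $\mathcal R_\theta^{2}(q)=\psi(\mathcal R_\theta^{1}(q'))$, and (iii) yields
\[
\tau_1^k\bigl(\mathcal R_\theta^{2}(q)\bigr)=\tau_1^k\bigl(\mathcal R_\theta^{1}(q')\bigr)=\mathcal R_\theta^{1}\bigl(\tau_1^k(q')\bigr).
\]
The twisted point $\tau_1^k(q')\in T^*\gamma_1$ now falls into one of two subcases: if it lies outside $U_1^+$ it contributes to $\tau_1^k(c_2)\cap T^*\gamma_1$, producing the term $\mathcal R_\theta^{1}(\tau_1^k(q'))$ on the right; if it lies in $U_1^+$, then $\psi(\tau_1^k(q'))\in T^*\gamma_2$ is the corresponding representative in $\tau_1^k(c_2)\cap T^*\gamma_2$, and one more application of (ii) rewrites $\mathcal R_\theta^{1}(\tau_1^k(q'))=\psi^{-1}\bigl(\mathcal R_\theta^{2}(\psi(\tau_1^k(q')))\bigr)$, which is exactly the same point in the plumbing space as the corresponding term in $\mathcal R_\theta^{2}(\tau_1^k(c_2)\cap T^*\gamma_2)$. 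The gluing $\cup_\psi$ in the statement precisely absorbs the ambiguity in which chart we record a plumbing-region point.

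The one step that requires a moment's care, and which I see as the main bookkeeping obstacle, is keeping the two identifications straight for points sitting in $U_1^+=\psi^{-1}(U_2^-)$: the same point of $P_\psi(S_1,S_2)$ is written two ways, once as a point of $T^*S_1$ and once as a point of $T^*S_2$, and one must check that the two rotations $\mathcal R_\theta^{1}$ and $\mathcal R_\theta^{2}$ produce the \emph{same} point in the plumbing space when applied through these two descriptions. This is precisely what Lemma~\ref{rotcom} ensures, so the bookkeeping closes. Performing this case analysis for every $q\in c_2$ and taking unions yields the first equation; running the same argument with the roles of $S_1$ and $S_2$ (and $\gamma_1,\gamma_2$) interchanged, and using that $\tau_2$ is supported near $S_2$, gives the second equation.
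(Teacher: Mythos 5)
Your proposal is correct and runs along the same lines as the paper's proof. The paper's argument is exactly the decomposition you describe, carried out as a chain of set-theoretic equalities: split $\mathcal{R}_\theta^2(c)$ into the part lying in $\mathcal{R}_\theta^1(T^*\gamma_1)$ (the plumbing region) and its complement, invoke the trivial action of $\tau_1$ off $T^*S_1$, rewrite the plumbing-region part via the intertwining $\mathcal{R}_\theta^2\circ\psi=\psi\circ\mathcal{R}_\theta^1$ of Lemma~\ref{rotcom}, commute $\tau_1$ past $\mathcal{R}_\theta^1$ using \eqref{e:tauro}, and then regroup; your pointwise case analysis on $q\in c_2$ is the same argument phrased element by element rather than as set identities.
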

\begin{proof}
	(\ref{lem:tauR}) follows from (\ref{e:tauro}) and (\ref{e:Rt}) and the fact that every symplectic Dehn twist along a sphere preserves all cotangent bundles of geodesic circles of this sphere. We only prove the former. Let $c\subset D^*\gamma_2$ be a smooth curve. Then we have
	\begin{eqnarray}
		 \tau_1\big(\mathcal{R}_{\theta}^2(c)\big)&=&\tau_1\big(\mathcal{R}_{\theta}^2(c)\cap \mathcal{R}_{\theta}^1(T^*\gamma_1)\big)\cup\tau_1\big(\mathcal{R}_{\theta}^2(c)\setminus \mathcal{R}_{\theta}^1(T^*\gamma_1)\big)\notag\\&=&\tau_1\big(\mathcal{R}_{\theta}^1(c\cap T^*\gamma_1)\big)\cup\big(\mathcal{R}_{\theta}^2(c)\setminus \mathcal{R}_{\theta}^1(T^*\gamma_1)\big)\notag\\&=&\mathcal{R}_{\theta}^1\big(\tau_1(c\cap T^*\gamma_1)\big)\cup\big(\mathcal{R}_{\theta}^2(c\setminus T^*\gamma_1)\big)\notag\\&=&\mathcal{R}_{\theta}^2\big(\tau_1(c\cap T^*\gamma_1)\cap T^*\gamma_2\big)\cup\mathcal{R}_{\theta}^1\big(\tau_1(c\cap T^*\gamma_1)\setminus T^*\gamma_2\big)\cup\big(\mathcal{R}_{\theta}^2\tau_1(c\setminus T^*\gamma_1)\big)\notag
		\\&=&\mathcal{R}_{\theta}^2\big(\tau_1(c)\cap T^*\gamma_2\big)\cup\mathcal{R}_{\theta}^1\big(\tau_1(c)\setminus T^*\gamma_2\big)\notag
		\\&=&\mathcal{R}_{\theta}^2\big(\tau_1(c)\cap T^*\gamma_2\big)\cup_\psi\mathcal{R}_{\theta}^1\big(\tau_1(c)\cap T^*\gamma_1\big).\notag
	\end{eqnarray}
\end{proof}

Now we are ready to give the proof of Lemma \ref{LmClean}.
\begin{proof}[Proof of Lemma \ref{LmClean}]

Since $S_i=\cup_{\theta\in \mathbb S^{n-1}}\mathcal{R}_{\theta}^i(\gamma_i), i=1,2$, we have
\begin{eqnarray}\label{e:cap}
	S_1\cap \tau^n(S_2)&=&\bigcup\limits_{\theta\in \mathbb S^{n-1}}\big(\mathcal R_\theta^1(\gamma_1)\cap \tau^n(S_2)\big)
	=\bigcup\limits_{\theta\in \mathbb S^{n-1}} \bigcup\limits_{\theta'\in \mathbb S^{n-1}}\big(\mathcal R_{\theta}^1(\gamma_1)\cap \tau^n(\mathcal{R}_{\theta'}^2(\gamma_2))\big)\notag\\
	&=&\bigcup\limits_{\theta\in \mathbb S^{n-1}}\mathcal R_\theta^1(\gamma_1\cap \tau^n(\gamma_2)),
\end{eqnarray}
where in the last equality we have used Lemma~\ref{lem:dehnrot} and the fact that each cotangent bundle of geodesic circle
$\mathcal{R}_{\theta}^1(T^*\gamma_1)\subset T^*S_1$ is identified locally with an unique one $\mathcal{R}_{\theta}^2(T^*\gamma_2)\subset T^*S_2$ near $p$ under the plumbing map $\psi$. Here we note that both $\gamma_1$ and $\tau^n(\gamma_2)$ locate in the plumbing domain $P_\psi(\gamma_1,\gamma_2)$. From (\ref{e:cap}) we can see that the intersection of $S_1$ and $\tau^n(S_2)$ is the union of $(n-1)$-spheres and/or the plumbing point $\{p\}$ and/or its antipodal point $\{-p\}$. Furthermore, when $\gamma_1$ and $\tau^n(\gamma_2)$ intersect transversely in $P_\psi(\gamma_1,\gamma_2)$, the Lagrangian $n$-spheres $S_1$ and $\tau^n(S_2)$ intersect cleanly.
Since $L_{\gamma_1}=S_1$ and $L_{\tau^n(\gamma_2)}=\tau^n(S_2)$, it follows from (\ref{e:Laginv}) that
\[
\iota(S_1)=S_1,\qquad\iota(\tau^n(S_2))=\tau^n(S_2).
\]
Set $N=S_1\cap \tau^n(S_2)$. Since $M^\iota=P_\psi(\gamma_1,\gamma_2)$, we have that $N^\iota=\gamma_1\cap \tau^n(\gamma_2)$. Clearly, for each sphere $\alpha$ belonging to $N$, one can pick an $\iota$-invariant Morse function $f$ on $\alpha$ such that the critical points of $f$ are precisely the unique maximal and minimal points belonging to $\alpha\cap N^\iota$.\end{proof}

	Similar to Lemma~\ref{lem:dehnrot}, we have
	
	\begin{lem}\label{lem:mdehnrot}
		Let $c_i\subseteq D^*\gamma_i$ be a smooth curve. Then for any $k\in\mathbb{Z}$ we have
		\begin{equation}\notag
			 \tau_j^k\big(\mathcal{R}_{\theta}^i(c_i)\big)=\mathcal{R}_{\theta}^1\big(\tau_j^k(c_i)\cap T^*\gamma_1\big)\cup_{\psi_1}\cdots\cup_{\psi_{m-1}}\mathcal{R}_{\theta}^m\big(\tau_j^k(c_i)\cap T^*\gamma_m\big),\quad j=1,\ldots,m.
		\end{equation}
	\end{lem}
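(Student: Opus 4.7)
The plan is to extend the proof of Lemma~\ref{lem:dehnrot} from two to $m$ spheres, using the same two ingredients: the commutation $\tau_j^k\circ\mathcal{R}_\theta^j=\mathcal{R}_\theta^j\circ\tau_j^k$ of Dehn twist and rotation on $T^*\gamma_j$ from (\ref{e:tauro}), and the commutation $\mathcal{R}_\theta^{\ell+1}\circ\psi_\ell=\psi_\ell\circ\mathcal{R}_\theta^\ell$ of rotation and plumbing map from Lemma~\ref{rotcom}, together with the support property that $\tau_j$ is supported in a compact tubular neighborhood of $S_j$ in $T^*S_j$ and preserves every cotangent bundle of a geodesic circle of $S_j$.

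First, I would verify that $\tau_j^k$ preserves the submanifold $\bigcup_\ell T^*\gamma_\ell$ (with the plumbing identifications $\sim$) inside $P_\psi(S_1,\ldots,S_m)$: outside the support of $\tau_j^k$ this is obvious; inside, $\tau_j^k$ preserves $T^*\gamma_j$, and the open pieces of $T^*\gamma_j$ near the plumbing points $p_{j-1}$ and $p_j$ are identified under $\psi_{j-1}$ and $\psi_j$ with pieces of $T^*\gamma_{j-1}$ and $T^*\gamma_{j+1}$. This is what justifies the decomposition $\tau_j^k(c_i)=\bigcup_\ell(\tau_j^k(c_i)\cap T^*\gamma_\ell)$ appearing on the right-hand side.

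Next, observe that the only non-trivial cases are $|i-j|\leq 1$. When $|i-j|\geq 2$, $\mathcal{R}_\theta^i(c_i)$ is disjoint from the support of $\tau_j^k$ and $\tau_j^k(c_i)=c_i\subset T^*\gamma_i$, so both sides collapse to $\mathcal{R}_\theta^i(c_i)$. When $j=i$, the commutation (\ref{e:tauro}) gives $\tau_j^k(\mathcal{R}_\theta^j(c_j))=\mathcal{R}_\theta^j(\tau_j^k(c_j))$; I then partition $\tau_j^k(c_j)$ into its pieces in $T^*\gamma_j$ proper and in the plumbing overlaps with $T^*\gamma_{j\pm1}$, and apply Lemma~\ref{rotcom} on each overlap piece to rewrite $\mathcal{R}_\theta^j$ of it as $\mathcal{R}_\theta^{j\pm1}$ applied to the corresponding piece of $T^*\gamma_{j\pm1}$. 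This reassembles to the right-hand side. The remaining case $|i-j|=1$ is handled by the same computation as in Lemma~\ref{lem:dehnrot}: decompose $\mathcal{R}_\theta^i(c_i)$ into the part in the plumbing region with $T^*S_j$ and its complement (on which $\tau_j^k$ is the identity), use Lemma~\ref{rotcom} to transfer the plumbing piece from $T^*\gamma_i$ to $T^*\gamma_j$, and then reduce to the $j=i$ case.

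The main obstacle is the set-theoretic bookkeeping when $\tau_j^k(c_i)$ simultaneously meets several plumbing overlaps: the expression $\mathcal{R}_\theta^1(\cdot)\cup_{\psi_1}\cdots\cup_{\psi_{m-1}}\mathcal{R}_\theta^m(\cdot)$ must be interpreted modulo $\sim$ so as not to double-count points lying in the overlaps. Fortunately, since the support of $\tau_j$ is confined to a small neighborhood of $S_j$, at most the three cotangent bundles $T^*\gamma_{j-1}$, $T^*\gamma_j$, $T^*\gamma_{j+1}$ can contribute non-trivially to the decomposition, and the identifications $\psi_{j-1}$, $\psi_j$ exactly match up their overlap pieces thanks to Lemma~\ref{rotcom}, making the bookkeeping tractable.
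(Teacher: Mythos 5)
The paper itself provides no proof for this lemma, only the remark that it is ``Similar to Lemma~\ref{lem:dehnrot}''; your proposal carries out exactly that extension, using the same two commutation relations---(\ref{e:tauro}) and Lemma~\ref{rotcom}---together with the localized support of $\tau_j$. Your explicit case analysis on $|i-j|$ and the attention to the identifications $\cup_{\psi_\ell}$ modulo $\sim$ (so that pieces appearing in overlaps are not double-counted) are exactly the bookkeeping one would need to make precise, and they are handled correctly; note that even the ``trivial'' case $|i-j|\geq 2$ implicitly uses Lemma~\ref{rotcom} to collapse the $\ell=i\pm 1$ overlap contributions on the right-hand side back to $\mathcal{R}_\theta^i(c_i)$, which you do acknowledge under ``bookkeeping.'' This matches the paper's intended argument.
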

	Put $\tau=\tau_1^{k_1}\tau_2^{k_2}\cdots\tau_m^{k_m}$.
With the last lemma, similar to \eqref{e:cap}, we have
	\[
	S_i\bigcap \tau^n(S_j)=\bigcup\limits_{\theta\in \mathbb S^{n-1}}\mathcal{R}_{\theta}^i(\gamma_i\cap \tau^n(\gamma_j)),
	\]
which implies the following

\begin{lem}\label{mLmClean}
	If $\gamma_i$ and $\tau^n(\gamma_j)$ intersect transversely in $P_\psi(S_1,\ldots,
    S_m)$, then  $S_i$ and $\tau^n(S_j)$ satisfy condition~$(C1)$ of Theorem~\ref{thm:KS}. 	
\end{lem}

\section{Classification of symplectic mapping class group}\label{sec:classfy}

In this section, we prove  Theorem \ref{thm:class} which is an analogue of Nielsen-Thurston classification for symplectic mapping class group in terms of growth of Floer cohomology in four dimension.

\subsection{The map $\bar{\mathcal R}$ on the surface $S_g$}

In this section, we explain more precisely how the map $\bar{\mathcal R}$ act on on the plumbing space $P_\psi(\gamma_1,\gamma_2,\ldots \gamma_m)$

We note that if we view $P_\psi(\gamma_1,\gamma_2,\ldots \gamma_m)$ as the one- or two-punctured genus $g$-surface $\hat{S}_g$, then the map $\bar{ \mathcal{R}}$ can be extended to the closed surface $S_g$ and it is exactly the hyperelliptic involution on $S_g$, illustrated in Figure \ref{figbarR}. We may also define a projection $\pi:P_\psi(\gamma_1,\gamma_2,\ldots \gamma_m)\to P_\psi(\gamma_1,\gamma_2,\ldots \gamma_m)/\bar{ \mathcal{R}}$ by projecting to the quotient of  $\bar{ \mathcal{R}}$ and the quotient is a punctured sphere. 

\begin{figure}
    \centering
    \includegraphics[width=\linewidth]{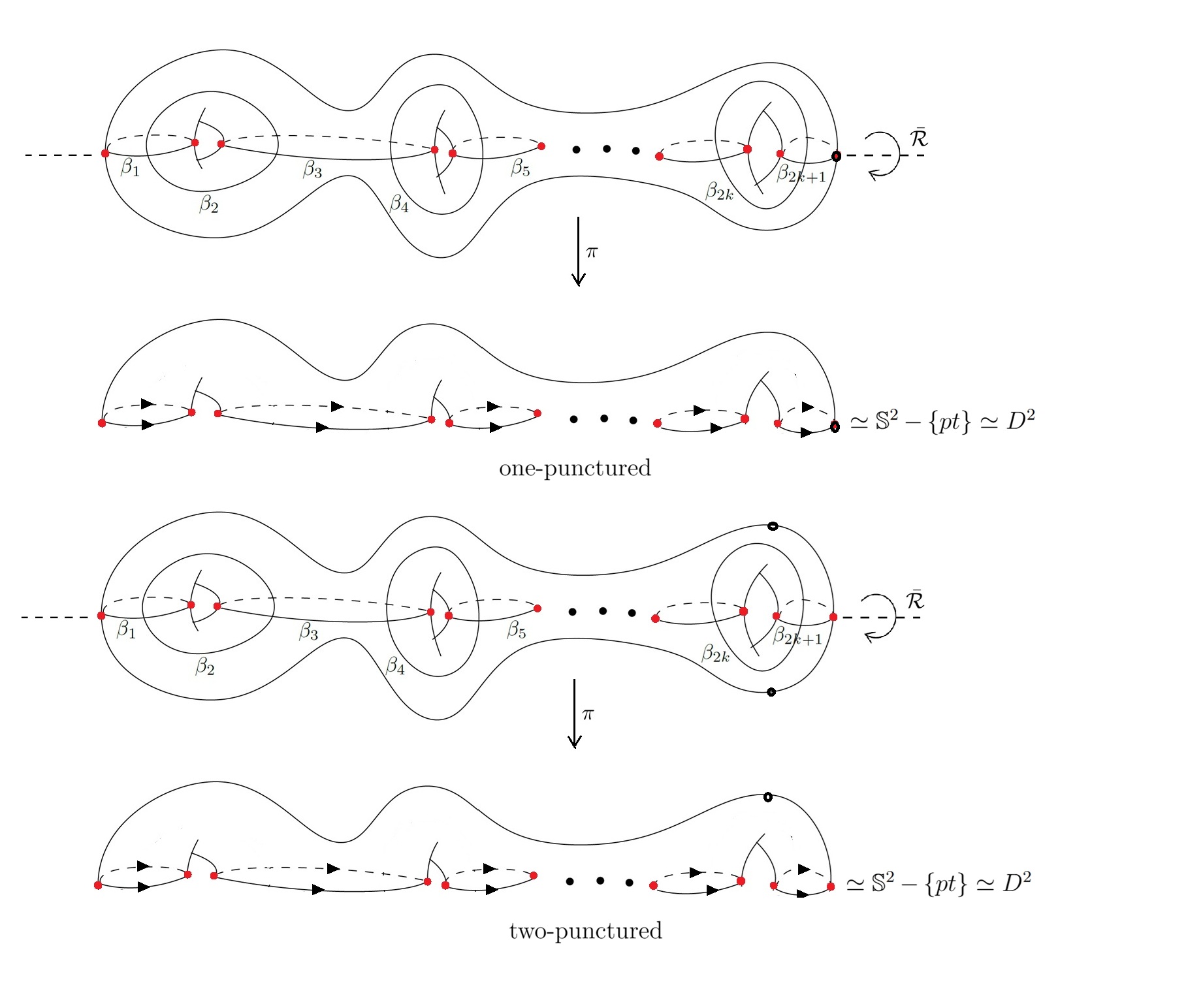}
    \caption{The hyperelliptic involution on genus $g$ surface}
   
    \label{figbarR}
\end{figure}

\begin{prop}\label{projtohypelip}
   The projection $\pi:P_\psi(\gamma_1,\gamma_2,\ldots \gamma_m)\to P_\psi(\gamma_1,\gamma_2,\ldots \gamma_m)/\bar{ \mathcal{R}}$ is a branched covering, branched over $(m+1)$ points, and the quotient $P_\psi(\gamma_1,\gamma_2,\ldots \gamma_m)/\bar{ \mathcal{R}}$ is homeomorphic to a disk $D^2$.
\end{prop}
\begin{proof}
    
Indeed, the hyperelliptic involution extends to the surface $S_g$, we consider the quotient $S_g$ under the hyperelliptic involution. A picture illustration is given in Figure \ref{figbarR}, where the red points are the branching points which are exactly fixed points of $\bar {\mathcal R}$, the black circled points are the punctures of $P_\psi(\gamma_1,\gamma_2,\ldots \gamma_m)$, and the arrowed sides are glued together. The quotient by $\bar{\mathcal R}$ is given by cutting along the curves $\beta_{2i-1}$ (which cuts $S_g$ into two parts), taking one part of the surface, and gluing each $\beta_{2i-1}$ into a line segments, which yields a topological sphere. Recall that $P_\psi(\gamma_1,\gamma_2,\ldots \gamma_m)$ is topologically a one-punctured or two-punctured surface, as in Figure \ref{meven} or \ref{modd}, where the set of punctures is invariant under the extension of $\bar{\mathcal R}$, and maps to one point in the quotient by $\bar{\mathcal R}$, the quotient manifold $P_\psi(\gamma_1,\gamma_2,\ldots \gamma_m)$ is topologically a one-punctured sphere, which is topologically a disk $D^2$. 
\end{proof}

Let $\gamma\subset P_\psi(\gamma_1,\gamma_2,\ldots \gamma_m)$ be a simple closed curve, then its image under the projection $\pi(\gamma)$ is a simple closed curve if and only if $\gamma\cap \bar{\mathcal{R}}\gamma=\varnothing$. Otherwise it has self-intersections corresponding to intersections of $\gamma$ and $\bar{\mathcal{R}}\gamma$.

\subsection{Proof of Theorem~\ref{thm:class}}

 In the following we identify $P_\psi(\gamma_1,\ldots,\gamma_m)$ with the surface embedded in $S_g$ as in the proof of Theorem~\ref{thm:symrate2}. We shall use the following lemma to establish the existence of invariant Lagrangian spheres or tori for reducible symplectic mapping classes. Recall that a curve is called admissible if it is fixed by the rotation map $\bar{\mathcal R}$. We need to rotate an admissible curve to recover a Lagrangian sphere in the symplectic manifold  $P_\psi(S_1,\ldots,S_m)$.

\begin{prop}
    \label{curveadmiss}
 {Let $[\gamma]$ be a free homotopy class in $P_\psi(\gamma_1,\ldots,\gamma_m)$ and let $\bar{\mathcal R}$ be the involution given in Definition \ref{barR}. If $[\bar{\mathcal R}\gamma]=[\gamma]$,} then there exists an admissible representative $\gamma^\prime$ which generates a smoothly embedded Lagrangian sphere $L_{\gamma^\prime}=\cup_i\cup_{\theta\in \mathbb{S}^1} \mathcal{R}^i_\theta(\gamma^\prime)$ in $P_\psi(S_1,\ldots,S_m)$.  {If $[\bar{\mathcal R}\gamma]\neq[\gamma]$, then there exists a representative $\gamma^\prime $ such that $\gamma^\prime$ is disjoint from $\bar{\mathcal R}\gamma^\prime$.}
\end{prop}

We postpone the proof to Section~\ref{sec:repr}. 

\begin{proof}[Proof of {Theorem~\ref{thm:class}}]
Let $f$ be a compactly supported symplectomorphism  of $A^2_m$-surface. It follows from Theorem~\ref{thm:hamiso} that $f=\phi_0\circ\tau_0$ for some compactly supported Hamiltonian diffeomorphism $\phi_0$ of $A^2_m$ and a composition $\tau_0$ of symplectic Dehn twists $\tau_{S_i},i=1\ldots,m$ along spheres $S_i$, i.e.,
$
\tau_0=\tau_{S_{j_1}}^{k_1}\circ\cdots\circ\tau_{S_{j_m}}^{k_m}$
for some integers  {$k_i\in\Z$}, where $j_1,\ldots,j_m$ is a permutation from $1$ to $m$.
For any Lagrangian spheres $\alpha,\beta\subseteq A^2_m$,
by Theorem~\ref{thm:lagiso}, we have
\[
\alpha=\phi_1\circ \tau_1(S_i),\quad \beta=\phi_2\circ \tau_2(S_j),
\]
for some $i,j\in\{1,\ldots,m\}$, where $\phi_1,\phi_2$ are compactly supported Hamiltonian diffeomorphisms of $A^2_m$ and $\tau_1,\tau_2$ are compositions of symplectic Dehn twists. Here we allow the case $i=j$.   We have that $f^n\beta$ is Lagrangian isotopic to $\tau_0^n\tau_2(S_j)$, since we can isotope $\phi_0$ and $\phi_2$ to identity through a path of Hamiltonian diffeomorphisms. Therefore $f^n\beta$ is Hamiltonian isotopic to $\tau_0^n\tau_2(S_j)$ due to ${\rm H}^1(\bS^2;\R)=0$. Since Lagrangian Floer cohomology is invariant with respect to Hamiltonian isotopies and symplectic Dehn twists are symplectomorphisms, we get
\[
\HF(\alpha,f^n\beta)=\HF(\tau_1(S_i),\tau_0^n\tau_2(S_j))=\HF\big(S_i,\tau_1^{-1}\tau_0^n\tau_2(S_j)\big).
\]

Now we discuss in two cases:
\begin{enumerate}
    \item  $\alpha$ is Lagrangian isotopic to $f^n\beta$;
    \item  $\alpha$ is not Lagrangian isotopic to $f^n\beta$.
\end{enumerate}
In case (1), the map $f$ is automatically blackucible. We next focus on case (2), then $\tau_1(S_i)$ is not Lagrangian isotopic to $\tau_0^n\tau_2(S_j)$, and hence,
by Lemma~\ref{lem:isot}, $\gamma_i$ is not isotopic to $\tau_1^{-1}\tau_0^n\tau_2(\gamma_j)$ in the sense of Definition~\ref{def:lagiso}. Then by
 {Lemma~\ref{e:mhf=i}},
\[
\rk\; \HF(\alpha,f^n\beta)=I\big(\gamma_i,\tau_1^{-1}\tau_0^n\tau_2(\gamma_j)\big)=I\big(\tau_1(\gamma_i),\tau_0^n\tau_2(\gamma_j)\big).
\]

Notice that for $m=2g,2g+1$, the plumbing space $P_\psi(\gamma_1,\ldots,\gamma_m)$ is homeomorphic to an oriented surface ${S}_{g}$ of genus $g$ with one or two punctures.  By the Nielsen-Thurston's classification~\cite[Theorem 13.2]{FM},  the mapping class $[\tau_0]\in \hbox{Mod} (\hat{S}_{g})$ has a representative $\phi$ of one of the three types:
\begin{enumerate}
    \item[I.] Reducible: $\phi$ leaves invariant a finite collection of pairwise disjoint simple closed curves in $\hat{S}_{g}$;
    \item [II.] Periodic: $\phi^m=Id$ for some positive integer $m$;
    \item [III.] Pseudo-Anosov: there are transverse measured foliations $(\mathcal{F}^s,\mu_s)$ and $(\mathcal{F}^u,\mu_u)$ on $\hat{S}_{g}$ and a real number $\lambda>1$ so that
    \[
\phi\cdot(\mathcal{F}^s,\mu_s)=(\mathcal{F}^s,\lambda\mu_s)\quad\hbox{and}\quad
\phi\cdot(\mathcal{F}^u,\mu_u)=(\mathcal{F}^u,\lambda^{-1}\mu_u).
    \]
\end{enumerate}

Therefore, the geometric intersection number $I\big(\tau_1(\gamma_i),\tau_0^n\tau_2(\gamma_j)\big)$ is periodic in $n$ for type II, and grows exponentially in $n$ for type III, see for instance~\cite[Theorem 14.24]{FM}; and so does $\rk\; \HF(\alpha,f^n\beta)$ for these two types.

In the case that $\phi$ is reducible, for some positive integer $k$, $\phi^k$ preserves a closed curve $\gamma$ in $P_\psi(\gamma_1,\gamma_2,\ldots,\gamma_m)$.

By Proposition \ref{curveadmiss},  {in the case that $[\bar{\mathcal R}\gamma]=[\gamma]$, there exists an admissible representative $\gamma^\prime$ such that $L_{\gamma^\prime}$ is an embedded Lagrangian sphere. }Therefore, by Proposition \ref{lem:isot}, $\tau_0^k$ preserves the Lagrangian sphere $L$ generated by $\gamma$ up to Lagrangian isotopy, and so does $f^k$.

In the case that $[\bar{\mathcal R}\gamma]\neq [\gamma]$, by Proposition \ref{curveadmiss}, there exists a representative $\gamma^\prime$ such that $\gamma^\prime\cap \bar{\mathcal R}\gamma^\prime=\varnothing$. By the same reasoning as in the proof of Lemma \ref{lem:smlg}, this shows that $\big(\cup_{\psi_i}\mathcal R^i_\theta(\gamma^\prime (\mathbb S^1))\big)\cap \big(\cup_{\psi_i}\mathcal R^i_{\theta^\prime}(\gamma^\prime (\mathbb S^1))\big)=\varnothing$ when $\theta\neq \theta^\prime$, where each $\cup_{\psi_i}\mathcal R^i_\theta(\gamma^\prime (\mathbb S^1))$ is diffeomorphic to a circle $\mathbb S^1$. Therefore, applying Lemma \ref{LmLagrangian}, we see that $L=\cup_{\psi_i}\cup_\theta \mathcal R^i_\theta(\gamma^\prime (\mathbb S^1))$ is a smoothly embedded Lagrangian submanifold in $M$. 

Since $\tau_0$ commutes with $\bar{\mathcal{R}}$, the mapping class $[\tau_0]\in \mathrm{Mod}(S_g)$ preserves both homotopy classes $[\gamma^\prime]$ and $[\bar{\mathcal R}\gamma^\prime] $. Since $\gamma^\prime$ and $\tau_0^k\gamma^\prime$ are homotopic in the class of $[\gamma]$, and both do not intersect with their image under $\bar{\mathcal R}$. Consider the quotient map $\pi: P_\psi(\gamma_1,\cdots,\gamma_m)\to D^2$ (see Proposition \ref{projtohypelip}).  We see that  $\pi(\gamma^\prime)$ and $\pi(\tau_0^k\gamma^\prime)$ must be homotopic simple closed curves in $D^2-\{\text{branching points}\}$. Therefore, there exists a homotopy from $\gamma^\prime$ to $\tau_0^k\gamma^\prime$, such that every intermediate loop $\gamma_1$ is mapped to a simple closed curve by $\pi$. This shows that any intermediate loop satisfies $\gamma_1\cap \bar{\mathcal R}\gamma_1=\varnothing$, and by the same reasoning as above, every $\gamma_1$ generates a smoothly embedded Lagrangian submanifold $L_{\gamma_1}$ in $M$. This shows that $\tau_0^k$ preserves the Lagrangian submanifold $L$ up to Lagrangian isotopy, and so does $f^k$.

 In the special case that $m=2$, since $\tau_0$ is identity away from the spheres $L_1,L_2$, we may consider its restriction on $P_\psi(\gamma_1,\gamma_2)\subset \mathbb T^2$, and gain a mapping class $[\tau_0]\in \mathrm{Mod}(\mathbb T^2)\simeq \mathrm{PSL}_2\Z$. From our analysis above, $[f]$ is reducible/periodic/hyperbolic if and only if $[\tau_0]$ is reducible/periodic/hyperbolic. By Lemma \ref{LmLTM}, the map $\tau_0\to [\tau_0]$ is generated by $\tau_{S_1}\mapsto \begin{pmatrix}
     1&1\\0&1
 \end{pmatrix}$ and $\tau_{S_1}\mapsto \begin{pmatrix}
     1&0\\-1&1
 \end{pmatrix}$. Therefore the map $\pi_0(\mathrm{Symp}_c(A^2_m,\omega)\to  \mathrm{Mod}(\mathbb T^2)\simeq \mathrm{PSL}_2\Z$ is given by the homomorphism $\rho$.

\end{proof}



\subsection{Proof of Proposition \ref{curveadmiss} (Existence of admissible representatives).}\label{sec:repr}
In what follows, we identify $P_\psi(\gamma_1,\ldots,\gamma_m)$ with a subset of $\mathbb \R^2$ with parallel sides identified as illustrated in Figure~\ref{Ydelta} and equip it with the Euclidean metric. Denote by $d$ the distance between two points by this metric.

By abuse of notations, for a simple closed curve $\gamma$ lying in $P_\psi(\gamma_1,\ldots,\gamma_m)$,  we also use $\gamma$ to denote its image in it.

In the case that $[\bar{\mathcal{R}}\gamma]\neq [\gamma]$, we consider the branched double cover $\pi:P_\psi(\gamma_1,\gamma_2,\cdots \gamma_n)\to \mathbb S^2-\{pt\}$ given by modding out by the hyperelliptic involution $\bar{\mathcal R}$. We may find a representative $\gamma^\prime$ that avoids all fixed points of $\bar{\mathcal R}$, hence $\pi(\gamma^\prime)$ avoids all branched points in $\mathbb{S}^2$. Further perturb $\pi(\gamma^\prime)$ to remove any self-intersection, then the lifting of $\pi(\gamma^\prime)$, which is exactly $\gamma^\prime \cup \bar{\mathcal{R}}\gamma'$, must be a disjoint union of two simple closed curves.  

In the case when $\bar{\mathcal{R}}\gamma$ is homotopic to $\gamma$,  we shall proceed in two steps:
\begin{itemize}
    \item First we show that $[\gamma]$ has an admissible representative $\gamma^\prime$. Thus by Lemma \ref{lem:smlg}, the submanifold $L_{\gamma^\prime}$ generated by $\gamma^\prime$ under the rotation maps is a smoothly embedded Lagrangian submanifold of $A^2_m$.

    \item  Next we show that such a representative must have exactly two fixed points under $\bar{\mathcal{R}}$. Therefore, the submanifold $L_{\gamma^\prime}$ must be a topological sphere, hence a smoothly embedded Lagrangian sphere in $A^2_m$.
\end{itemize}

Now we prove these two claims.
\begin{lem}
Any  {$\bar{\mathcal R}$-invariant} free homotopy class of simple closed curves in $P_\psi(\gamma_1,\ldots,\gamma_m)$  has a representative as an admissible simple closed curve.
\end{lem}
\begin{proof}

Let $\gamma:\mathbb S^1\to P_\psi(\gamma_1,\ldots,\gamma_m)$ be a smooth simple closed curve in the class $[\gamma]$. We will take the set $\mathcal C(\epsilon,C)$ of Lipchitz curves in a compact subset of $P_\psi(\gamma_1,\ldots,\gamma_m)$ and consider the area swept out by the homotopy between $\gamma^\prime$ and $\bar{\mathcal R}\gamma^\prime$ for each $\gamma^\prime\in \mathcal C(\epsilon,C)$. We shall show that the curve that minimizes this area must be admissible.

If $\gamma=\bar{\mathcal{R}}(\gamma)$, nothing needs to be proved. Otherwise, one can choose a set $\mathcal D$ to be a compact $\bar{\mathcal R}$-invariant subset that contains $\gamma$, so that there exists a homotopy $$F: [0,1]\times \mathbb S^1\to P_\psi(\gamma_1,\ldots,\gamma_m),\quad F(0,\cdot)=\gamma,\; F(1,\cdot)=\bar{\mathcal{R}}\circ\gamma$$
such that the image of $F$ is in the set $\mathcal{D}$.


For any closed curve $\sigma:\mathbb S^1 \to P_\psi(\gamma_1,\ldots,\gamma_m)$, we denote
$$
\hbox{Lip}(\sigma):=\sup_{s,t\in \mathbb S^1}\frac{d\big(\sigma(s),\sigma(t)\big)}{d_{\mathbb S^1}(s,t)}
$$
where $d$ is the distance function induced by a Riemannian metric on $P_\psi(\gamma_1,\ldots,\gamma_m)$ and $d_{\mathbb S^1}$ is the standard distance on $\mathbb S^1$.

Given $C>\hbox{Lip}(\gamma)$, we pick $\epsilon>0$ small enough ($C \epsilon\ll  1$) and consider the set of curves
\begin{equation}
\begin{split}
    \mathcal{C}(\epsilon, C)=\big\{\sigma\in C^\infty(\mathbb S^1, \mathcal{D}) \big|&\;[\sigma]=[\gamma];\; \hbox{Lip} (\sigma)\le C  ;  d(\sigma(s),\sigma(t))\ge \epsilon \text{ if }  d_{\mathbb S^1}(s,t)\ge C\epsilon\big\}.\notag
\end{split}
\end{equation}
Clearly, $\gamma,\bar{\mathcal{R}}(\gamma)\in\mathcal{C}(\epsilon, C)$ for $\epsilon$ small enough.


For any $\sigma\in \mathcal{C}(\epsilon, C)$, we denote by $\mathcal{A}_\sigma=\cap_{G} G([0,1]\times \mathbb{S}^1)$ where $G:[0,1]\times \mathbb{S}^1\to \mathcal{D}$ is taken over all possible homotopies from $\sigma$ to $\bar{\mathcal{R}}(\sigma)$ inside $\mathcal{D}$.

We observe that $\mathcal{A}_\sigma$ is a compact subset of $\mathcal{D}$ that is bounded by $\sigma$ and $\bar{\mathcal{R}}(\sigma)$. We claim that the region $\mathcal{A}_\sigma$ must be invariant under $\bar{\mathcal{R}}$.  If not, then since $\bar{\mathcal{R}}\circ \bar{\mathcal{R}}=Id$, $\bar{\mathcal{R}}(\mathcal{A}_\sigma)$ is also a region bounded by $\sigma$ and $\bar{\mathcal{R}}(\sigma)$. 
If $\sigma$ and $\bar{\mathcal{R}}(\sigma)$ intersect, then the set of intersection points is $\bar{\mathcal{R}}$-invariant. And we see that $\mathcal{A}_\sigma$ is a union of bigons (possibly including some line segments) whose boundaries are $\bar{\mathcal{R}}$-invariant, which implies that $\mathcal{A}_\sigma$ is $\bar{\mathcal{R}}$-invariant. If $\sigma$ and $\bar{\mathcal{R}}(\sigma)$ do not intersect, then the two regions $\mathcal{A}_\sigma$ and $\bar{\mathcal{R}}(\mathcal{A}_\sigma)$ are either the same, or their union must be the whole surface $P_\psi(\gamma_1,\ldots,\gamma_m)$. The latter is impossible since $\bar{\mathcal{R}}$ fixes the plumbing points and their antipodal points.

We now consider the map $\mathcal{F}_{\bar{\mathcal{R}}}: \mathcal{C}(\epsilon, C)\to \R_{\ge 0}$,  $$\mathcal{F}_{\bar{\mathcal{R}}}(\sigma)=Area(\mathcal{A}_\sigma).$$ Here, we take the unsigned (non-negative) area of a region given by the metric on the surface.

Since $\mathcal{C}(\epsilon, C)$ is compact in the space of Lipschitz continuous curves from $\mathbb S^1$ to $\mathcal{D}$ with respect to the Lipschitz norm, one can take $\gamma^\prime\in \mathcal{C}(\epsilon, C)$ to achieve the minimum of $\mathcal{F}_{\bar{\mathcal{R}}}$. We claim that $\gamma^\prime= \bar{\mathcal{R}}(\gamma^\prime)$. Indeed, if $\gamma^\prime\ne  \bar{\mathcal{R}}(\gamma^\prime)$, then for any curve $\sigma$ in the homotopy class lying in $\mathcal{A}_{\gamma^\prime}$, the area $\mathcal{F}_{\bar{\mathcal{R}}}(\sigma)$ is smaller than $\mathcal{F}_{\bar{\mathcal{R}}}(\gamma')$. This contradiction implies that $\gamma^\prime= \bar{\mathcal{R}}(\gamma^\prime)$.


Moreover, the curve $\gamma^\prime$ is an admissible Lipschitz curve in the homotopy class of $\gamma$ whose only self intersection points are those $\gamma'(s)=\gamma'(t)$ with  {$d_{\mathbb S^1}(s,t)\leq C\epsilon$}. Hence, for a small enough $\epsilon$,  any self-intersection of $\gamma^\prime$ must give rise to a small disk and thus can be removed by deforming the curve in the homotopy class. Since $\gamma^\prime$ is admissible,  the resulting simple closed curve is also admissible.
\end{proof}

\begin{lem}
Any smooth admissible simple closed curve in $P_\psi(\gamma_1,\ldots,\gamma_m)$ has exactly two fixed points under $\bar{\mathcal{R}}$.

\end{lem}
\begin{proof}

For an admissible curve $\gamma\subset P_\psi(\gamma_1,\ldots,\gamma_m)$,  {given by $\gamma:\R/\Z\to P_\psi(\gamma_1,\ldots,\gamma_m)$,} we assume without loss of generality that $\gamma(0)$ is not a fixed point under $\bar{\mathcal R}$ and $\gamma$ has unit speed under the Euclidean metric $\R^2$ when we view $P_\psi(\gamma_1,\ldots,\gamma_m)$ as a subset of $\R^2$ with opposite sides identified as in Figures \ref{meven} and \ref{modd}. Under this assumption, there exists a unique $t\in[0,1]$ such that $\gamma(t)=\bar{\mathcal{R}}(\gamma(0))$. We claim that $\gamma$ has at least two fixed points by $\bar{\mathcal{R}}$. Indeed, since $\gamma$ is admissible, simple closed with unit speed,  {at least one of the following holds:\begin{itemize}
    \item  $\gamma(s)=\bar{\mathcal{R}}\circ\gamma(t-s)$ and $\gamma(t+s)=\bar{\mathcal{R}}\circ\gamma(1-s)$;
    \item $\gamma(s)=\bar{\mathcal{R}}\circ\gamma(1-s)$ and $\gamma(t+s)=\bar{\mathcal{R}}\circ\gamma(t-s)$, for any $s\in[0,t]$.
\end{itemize} Plug in $s=0$, we see that the second case cannot hold since we assumed that $\gamma(0)$ is not fixed by  $\bar{\mathcal{R}}$.  We plug in $s=\frac{t}{2}$ and $s=\frac{-t+1}{2}$ in the first case. This  shows that $\gamma(\frac{t}{2})$ and $\gamma(\frac{1+t}{2})$ are fixed points of $\bar{\mathcal{R}}$. }


We next show that an admissible curve has at most two fixed points. By shifting the time of $\gamma(s)$ to $s^\prime=s-\frac{t}{2}$, we may assume without loss of generality that $\gamma$ is unit speed and $\gamma(0)$ is a fixed point by $\bar{\mathcal{R}}$, then by the same reasoning as above, $\gamma(\frac{1}{2})$ must be a fixed point of $\bar{\mathcal{R}}$.

Let  {$t_1=\min\{s\in (0,\frac{1}{2}]\ |\ \bar{\mathcal{R}}\circ\gamma(s)=\gamma(s)\}$ be the first intersecting time between $\gamma$ and $\bar{\mathcal R}\gamma$. Then we have $\gamma(t_1+s)=\bar{\mathcal{R}}\circ\gamma(t_1-s)$
for any $0\le s\le t_1$. Plugging in $s=t_1$, we have $\gamma(0)=\gamma(2t_1)$, but since $\gamma$ is a simple closed curve, we have $2t_1=1$. Therefore, $t_1=\frac{1}{2}$,} and $\gamma$ only has two fixed points.
\end{proof}

\section{ Floer cohomology groups and geometric intersection numbers}\label{SIntersection}
In this section, we relate Lagrangian Floer cohomology groups to the geometric intersection numbers in the reduced space $P_\psi(\gamma_1,\ldots, \gamma_n)$ and give the proof of Lemma  \ref{LmIntersection} and \ref{multiintersect}.

\subsection{Lagrangian Floer cohomology and geometric intersection number}

The geometric intersection number between $\alpha$ and $\beta$ is a homotopic invariant. If two simple closed curves $a$ and $b$ realize the minimal intersection in their homotopy classes $\alpha=[a]$ and $\beta=[b]$, i.e., $I(\alpha,\beta)=\sharp (a\cap b)$, then we say that $a$ and $b$ \emph{in minimal position}. The following bigon criterion from ~\cite[Proposition 1.7]{FM} is useful for verifying the minimal position.

\begin{lem}[The bigon criterion]\label{lem:bigon}
	Two transverse simple closed curves $a$ and $b$ in a surface $S$ are in minimal position if and only if they do not form a bigon. Here a \emph{bigon} refers to an embedded disk in $S$ whose boundary is the union of an arc of $a$ and an arc of $b$ intersecting in exactly two points.
\end{lem}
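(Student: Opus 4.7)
The plan is to prove both directions separately, with the backward direction (no bigon implies minimal position) requiring most of the work via a universal cover argument.

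For the easy direction, I would assume that $a$ and $b$ bound a bigon $D\subset S$, whose boundary is an arc $\alpha\subset a$ and an arc $\beta\subset b$ meeting at exactly two points $p,q$. Pushing $\alpha$ across a slightly thickened $D$ (using a small tubular neighbourhood to stay transverse) produces a simple closed curve $a'$ isotopic to $a$ with $|a'\cap b|=|a\cap b|-2$. Hence $a$ and $b$ are not in minimal position.

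For the hard direction, I would argue contrapositively: assume $a$ and $b$ are transverse and not in minimal position, and produce an embedded bigon. The main step is to pass to the universal cover $\pi\colon\widetilde S\to S$, which is diffeomorphic to $\mathbb R^2$ (the torus case is handled separately or by the same idea via $\mathbb R^2$). Each connected component of $\pi^{-1}(a)$ and $\pi^{-1}(b)$ is a properly embedded line, since $a$ and $b$ are simple. The key claim is that some component $\widetilde a$ of $\pi^{-1}(a)$ meets some component $\widetilde b$ of $\pi^{-1}(b)$ in at least two points. To see this, note that $i([a],[b])$ equals the number of orbits of the deck group acting diagonally on the set of intersecting pairs of lifts $(\widetilde a,\widetilde b)$ that meet exactly once; if every pair of lifts met at most once, then $|a\cap b|$ would already realize the geometric intersection number, contradicting our assumption (one obtains the bound by homotoping $a$ to any representative in its class and counting lifts: the number of intersections is at least the number of crossing pairs of lifts modulo the deck action).

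Once two lifts $\widetilde a,\widetilde b$ meet in at least two points, pick two consecutive intersection points $x,y$ along $\widetilde a$. The subarcs of $\widetilde a$ and $\widetilde b$ between $x$ and $y$, together with the Jordan curve theorem in $\widetilde S\cong\mathbb R^2$, bound an embedded disk $\widetilde D$ in the universal cover, i.e.\ an embedded bigon upstairs. Its image $D=\pi(\widetilde D)$ is an immersed bigon in $S$. The main obstacle is that $D$ may not be embedded; this is handled by the standard \emph{innermost bigon} argument. If any arc of $a$ or $b$ enters the interior of $D$, follow that arc until it exits $D$ for the first time, producing a smaller bigon (in $\widetilde D$) bounded by subarcs of $a$ and $b$, with strictly fewer interior intersection points of $a\cup b$ with $D$. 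Iterating, after finitely many steps we obtain an innermost bigon with empty interior intersections, which descends to an embedded bigon in $S$. This yields the desired bigon and completes the proof.
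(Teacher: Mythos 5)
The paper does not prove this lemma; it cites it verbatim as Proposition 1.7 of Farb--Margalit \cite{FM} and uses it as a black box in the proof of Lemma \ref{LmIntersection}. Your proposal supplies a proof, and it is essentially the standard argument from that reference: the easy direction by pushing across the bigon, and the hard direction by lifting to the universal cover, producing a bigon between two lifts that cross twice, and extracting an embedded innermost bigon that descends.

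One place where your write-up is imprecise is the key intermediate claim, the assertion that ``$i([a],[b])$ equals the number of orbits of the deck group acting diagonally on the set of intersecting pairs of lifts that meet exactly once.'' As worded this conflates the given transverse representatives $a,b$ with homotopy-class data. The clean version is: (i) two properly embedded lifts $\widetilde a$, $\widetilde b$ in $\widetilde S$ intersect if and only if their endpoints on the circle at infinity are linked (Jordan curve theorem, after compactifying $\widetilde S\cong\mathbb R^2$ and noting that a lift of a compact curve fellow-travels a unique geodesic or straight line and hence has well-defined endpoints); (ii) linking of endpoints depends only on the free homotopy classes, so the number of deck-orbits of linked pairs is a homotopy invariant; (iii) this invariant equals $i([a],[b])$ because geodesic representatives (hyperbolic metric when $\chi(S)<0$, flat metric for the torus) realize minimal position and have lifts that meet at most once. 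With (i)--(iii) in place, your contrapositive goes through: if $|a\cap b|>i([a],[b])$ and every pair of lifts met at most once, you would get $|a\cap b|=i([a],[b])$, a contradiction, so some pair of lifts meets at least twice. The rest of your argument (Jordan curve, innermost bigon, descent to $S$) is correct. You should also say explicitly that the surfaces in question are the torus or have negative Euler characteristic, which is the case for the translation surfaces $Y$ appearing in the paper; for the sphere the claim is vacuous.
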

Thus, in order to prove Lemma \ref{LmIntersection}, it is enough to show  how to eliminate bigons.
\begin{lem}\label{elimbigon}
	Let $\gamma_1$ and $\gamma_2$ be the big circles that we have chosen for the plumbing domain $P_\psi(S_1,S_2)$. Then any bigon formed by $\gamma_1$ and $\tau^n(\gamma_2)$  on $P_\psi(\gamma_1,\gamma_2)$ can be eliminated by homotoping $\gamma_1$ provided that $\gamma_1$ and $\tau^n(\gamma_2)$ are not admissibly isotopic.
\end{lem}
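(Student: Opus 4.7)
The plan is to verify Lemma~\ref{elimbigon} as a direct application of the classical bigon-removal trick for surfaces. First, observe that $P_\psi(\gamma_1,\gamma_2)=T^*\gamma_1\cup_\psi T^*\gamma_2$ is an orientable smooth $2$-manifold, since each $T^*\gamma_i$ is the cylinder $S^1\times \R$ and the plumbing map $\psi$ is a symplectomorphism between open subsets of the two cylinders (hence an orientation-preserving diffeomorphism). Both $\gamma_1$ and $\tau^n(\gamma_2)$ are simple closed curves in this surface: the former is the zero section of $T^*\gamma_1$, and the latter is the diffeomorphic image of the zero section $\gamma_2\subset T^*\gamma_2$. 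Since a bigon is formed only by transverse simple closed curves, we may assume transversality of the intersection throughout.

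Now suppose $B\subset P_\psi(\gamma_1,\gamma_2)$ is a bigon, i.e.\ an embedded closed disk whose boundary is $\alpha\cup\beta$, with $\alpha\subset \gamma_1$ and $\beta\subset \tau^n(\gamma_2)$ arcs meeting only at two transverse points $x,y$. Since we work in an orientable $2$-manifold, we can thicken $B$ to an open tubular neighborhood $N$ diffeomorphic to $\R^2$ containing $B$ in its interior. I would then construct an ambient smooth isotopy $\{\phi_t\}_{t\in[0,1]}$ of $P_\psi(\gamma_1,\gamma_2)$, supported in $N$, that pushes the arc $\alpha$ across $B$ to a small collar of $\beta$ on the far side (the standard finger move). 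The curve $\gamma_1':=\phi_1(\gamma_1)$ is then a simple closed curve isotopic to $\gamma_1$ through $\{\phi_t(\gamma_1)\}$, still transverse to $\tau^n(\gamma_2)$, with the two points $x,y$ removed and no new intersections created inside $N$. Thus $|\gamma_1'\cap \tau^n(\gamma_2)|=|\gamma_1\cap\tau^n(\gamma_2)|-2$.

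Iterating this procedure terminates in finitely many steps at a representative of the isotopy class of $\gamma_1$ that bounds no bigon with $\tau^n(\gamma_2)$; by the bigon criterion (Lemma~\ref{lem:bigon}) this is precisely a minimal position configuration. The non-isotopy hypothesis is exactly what ensures that the resulting intersection count is positive, which is the situation actually needed when invoking the lemma in the proof of Lemma~\ref{LmIntersection}. The only technical point that I expect to require care is verifying that the bigon $B$ sits inside $P_\psi(\gamma_1,\gamma_2)$ as a genuinely embedded disk, so that the tubular neighborhood $N$ and the finger-move isotopy exist intrinsically in the surface --- this is automatic once one has smoothness and orientability of the plumbing space, but would be the place where the argument could fail if one tried to run it in a larger or less regular ambient space. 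No ingredients beyond the Farb--Margalit treatment~\cite{FM} are required.
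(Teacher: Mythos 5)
Your argument is a textbook bigon-removal, and it does show that one \emph{can} shrink the intersection number of $\gamma_1$ with $\tau^n(\gamma_2)$ by a generic finger move supported in a disk neighborhood. But this misses the actual content of the lemma, which is not just that bigons disappear under some ambient isotopy, but that they can be eliminated by an isotopy through \emph{admissible} curves in the sense of the paper: the isotopy must fix the distinguished points $\Sigma=\{p,q_1,q_2\}$ (the plumbing point and the antipodal points), and every intermediate curve $h_s(\gamma_1)$ must remain invariant under the reflection $\bar{\mathcal{R}}$. Those constraints are essential for the downstream use of the lemma: one needs $\gamma_1'\in\mathcal{C}_{ad}$ containing $\{p,q_1\}$ in order to form the Lagrangian sphere $S_1'=L_{\gamma_1'}$ via Lemma~\ref{lem:smlg}, to preserve $\iota$-invariance as in (\ref{e:Laginv}), and to make the bookkeeping ``intersection points come in $\mathcal{R}$-pairs except for $p,q_1$'' work out in the proof of Lemma~\ref{LmIntersection}. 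A generic finger move, pushed in some tubular neighborhood $N$ of a single bigon, has no reason to respect the $\mathcal{R}$-symmetry or to avoid moving $\gamma_1$ off $p$ or $q_1$, so the output $\gamma_1'$ of your procedure need not be admissible.

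Relatedly, your explanation of the non-isotopy hypothesis is not what the proof needs. The hypothesis is not there to ``ensure the resulting intersection count is positive''; the number of intersection points is whatever it is. Its role is to rule out the one kind of bigon that the constrained ($\mathcal{R}$-invariant, $\Sigma$-fixing) isotopy genuinely cannot remove, namely a bigon containing both $p$ and $q_1$: such a bigon forces an arc of $\tau^n(\gamma_2)$ to be homotopic (rel endpoints, through the $\mathcal{R}$-symmetric pair) to an arc of $\gamma_1$ between $p$ and $q_1$, and then $\tau^n(\gamma_2)$ is isotopic to $\gamma_1$ — contradiction. For the remaining two types of bigons (containing one or none of $\{p,q_1\}$), the $\mathcal{R}$-paired bigons can be simultaneously pushed off while keeping $p,q_1$ fixed and keeping the curve admissible. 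Without the classification of bigons by their intersection with $\Sigma$, and without tracking $\mathcal{R}$-invariance and the fixed points $p,q_1$ through the isotopy, your argument does not yield the conclusion that Lemma~\ref{LmIntersection} actually invokes.
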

\begin{proof}

We parameterize $D^*\gamma_2$ by
$
T^*\gamma_2\cong S^1\times [-1,1]=\big\{(\theta,t)\in [0,2\pi)\times [-1,1]\big\}.
$
Then we can define two subsets $T_\pm^*\gamma_2\subset T^*\gamma_2$ by
\[
D_+^*\gamma_2\cong\big\{(\theta,t)\in [0,2\pi)\times [0,1] \big\},
\quad
D_-^*\gamma_2\cong\big\{(\theta,t)\in [0,2\pi)\times [-1,0] \big\}.
\]
Let $\gamma_1^+$ (resp. $\gamma_1^-$) be one of half geodesic circles of $\gamma$ connecting  the plumbing point $p$ to its antipodal point $q_1$ such that $D^*\gamma_1^+\subseteq T^*\gamma_1$ (resp. $D^*\gamma_1^-\subseteq T^*\gamma_1$) is identified with $D_+^*\gamma_2$ (resp. $D_-^*\gamma_2$ ) near $p$ under the plumbing map $\psi$, as illustrated in Figure \ref{p+}.

\begin{figure}[ht]
	\begin{center}
		\includegraphics[width=0.5\textwidth]{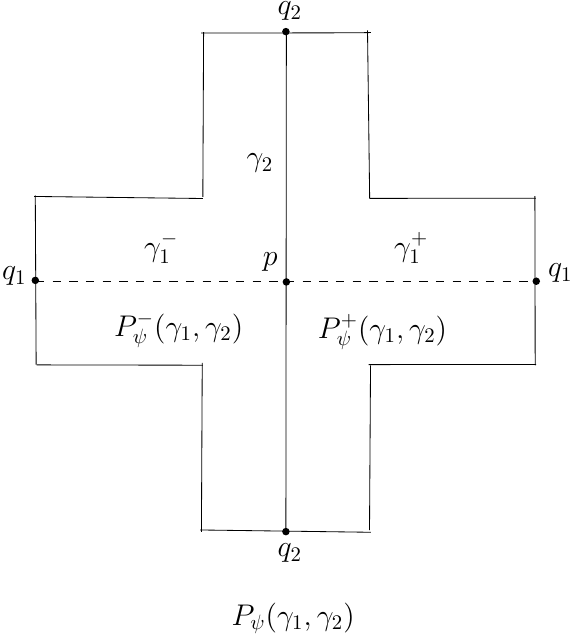}
	\end{center}
		\caption{$P^+_\psi(\gamma_1,\gamma_2)$ and $P^-_\psi(\gamma_1,\gamma_2)$ in the plumbing space of $T^*\gamma_1$ and $T^*\gamma_2$}\label{p+}
\end{figure}

Clearly, we have {$D^*\gamma_1=D^*\gamma_1^+\cup D^*\gamma_1^-$ and $D^*\gamma_2=D^*_+\gamma_2\cup D^*_-\gamma_2$. Denote
\[
P_\psi^+(\gamma_1,\gamma_2)=D^*\gamma_1^+\cup_\psi D^*_+\gamma_2,\quad P_\psi^-(\gamma_1,\gamma_2)=D^*\gamma_1^-\cup_\psi D^*_-\gamma_2.
\]
Then $P_\psi(\gamma_1,\gamma_2)=P_\psi^+(\gamma_1,\gamma_2)\cup P_\psi^-(\gamma_1,\gamma_2)$ and $\gamma_2=P_\psi^+(\gamma_1,\gamma_2)\cap P_\psi^-(\gamma_1,\gamma_2)$.


 We define the map $\mathcal{R}:P_\psi^+(\gamma_1,\gamma_2)\to P_\psi(S_1,S_2)$ to be the restriction of the map $\bar{\mathcal{R}}$ to $P_\psi^+(\gamma_1,\gamma_2)$  (cf. Definition~\ref{barR} for $\bar{\mathcal{R}}$).} 
The image of $\mathcal{R}$ is precisely $P_\psi^-(\gamma_1,\gamma_2)$. By Lemma~\ref{lem:dehnrot},
\[
\mathcal{R}\big(\tau^n(\gamma_2)\cap P_\psi^+(\gamma_1,\gamma_2)\big)=\tau^n(\gamma_2)\cap P_\psi^-(\gamma_1,\gamma_2),
\]
and hence, $\tau^n(\gamma_2)$ is invariant under the map $\bar{\mathcal{R}}$. Note that $\gamma_1$ is also invariant under $\bar{\mathcal{R}}$.
So both $\gamma_1$ and $\tau^n(\gamma_2)$ belong to $\mathcal{C}_{ad}$, and all bigons of $\gamma_1$ and $\tau^n(\gamma_2)$ appear in pairs related by $\mathcal{R}$.
Let $q_1$ and $q_2$ be the antipodal points of the plumbing point $p$ in $S_1$ and $S_2$ respectively. Denote $\Sigma=\{p,q_1,q_2\}$. As a map on $P_\psi(\gamma_1,\gamma_2)$, each symplectic Dehn twist $\tau_i$ satisfies $\tau_i(\Sigma)=\Sigma$, hence the curve $\tau^n(\gamma_2)$ contains precisely two elements of $\Sigma$.
Then all bigons $\Omega$ of $\gamma_1$ and $\tau^n(\gamma_2)$ are divided into three types:
\begin{enumerate}
	\item[I.] $\Omega\cap \Sigma=\{p,q_1\}$;
	\item[II.]  $\Omega\cap \Sigma=\{p\}$ or $\{q_1\}$;
	\item[III.]  $\Omega\cap \Sigma=\emptyset$.
\end{enumerate}
 Let $D_+$ and $D_-=\mathcal{R}(D_+)$ be a pair of such bigons.
If $\gamma_1$ and $\tau^n(\gamma_2)$ has a bigon of type I, then $\tau^n(\gamma_2)$ contains a simple closed curve homotopic to $\gamma_1$ and thus $\tau^n(\gamma_2)$ itself is homotopic to $\gamma_1$. If $D_+$ and $D_-$ are of type II or type III, one can eliminate these two bigons by deforming $\gamma_1$ to an admissibly isotopic curve $\gamma_1'\in \mathcal{C}_{ad}$ with $p$ and $q_1$ being fixed as illustrated in the Figure \ref{deform}.
\end{proof}
\begin{figure}[ht]
	\begin{center}
		\includegraphics[width=0.9\textwidth]{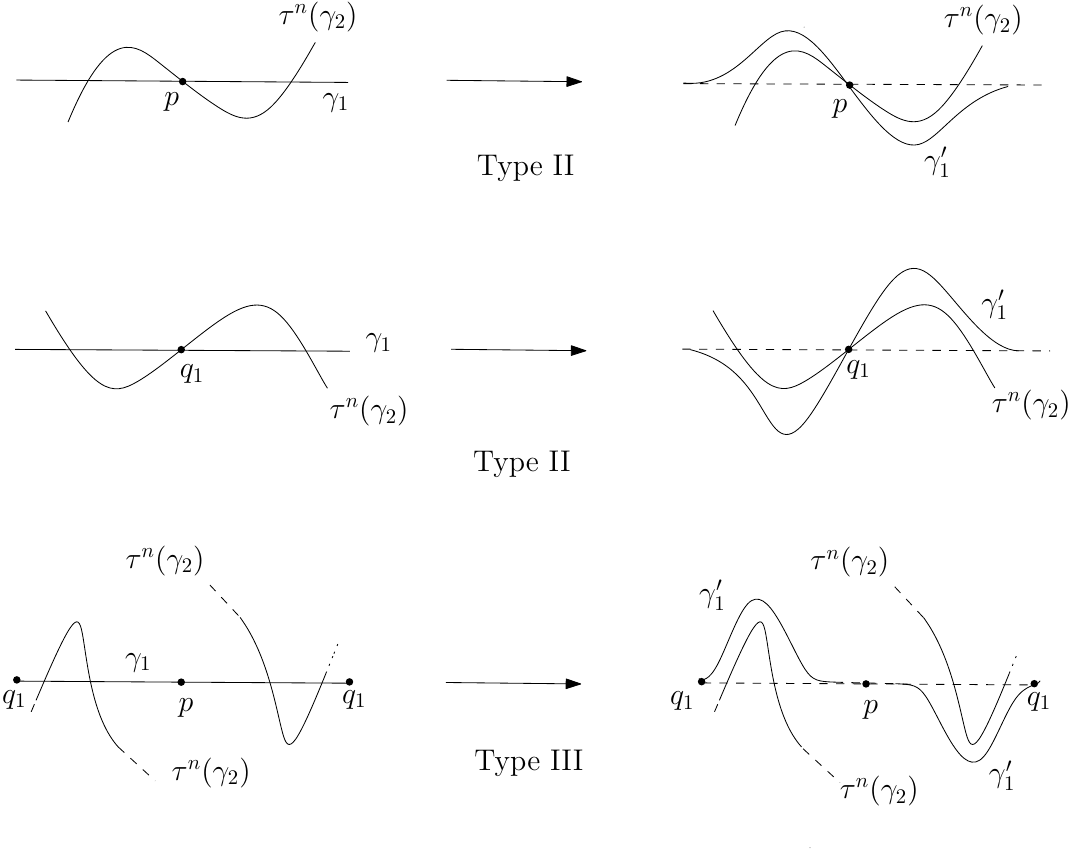}
	\end{center}
		\caption{Deforming $\gamma_1$ to eliminate the bigons}\label{deform}
\end{figure}

The last lemma enables us to complete the proof of  Lemma \ref{LmIntersection}.

\begin{proof}[Proof of Lemma \ref{LmIntersection}]
	
	For the two simple closed curves $\tau^n(\gamma_2),\gamma_1:\mathbb{R}/\mathbb{Z}\to P_\psi(\gamma_1,\gamma_2)$, whenever $\tau^n(\gamma_2)$ and $\gamma_1$ are not admissibly isotopic, by successively killing the unnecessary intersection points we can find an isotopy
	\[
	h:[0,1]\times \mathbb{R}/\mathbb{Z}\longrightarrow P_\psi(\gamma_1,\gamma_2),\quad h(0,t)=\gamma_1(t),\;h(1,t)=\gamma_1'(t)
	\]
	satisfying the following properties: during the isotopy each $h_s=h(s,\cdot)$ is a simple closed curve containing
	$\{p,q_1\}$, and $\gamma_1'$ and $\tau^n(\gamma_2)$ are transverse
	and in minimal  position in $P_\psi(\gamma_1,\gamma_2)$, and the image of each $h_s$ is invariant under  {$\bar{\mathcal{R}}$}.
	
	Then by Lemma~\ref{lem:isot} the Lagrangian spheres $S_1'=L_{\gamma_1'}$ and $S_1$ are Lagrangian isotopic via $L_{h_s}$, and hence exact Lagrangian isotopic because of $H^1(\mathbb S^n,\mathbb{R})=0$ with $n\geq 2$.
By Lemma~\ref{LmClean}, the condition~(C1) of Theorem~\ref{thm:KS} for $S_1'$ and $\tau^n(S_2)$ are still satisfied. Since $S_1'\cap M^\iota=\gamma_1'$ and $\tau^n(S_2)\cap M^\iota=\tau^n(\gamma_2)$, by Lemma~\ref{lem:bigon} the condition~(C2) of Theorem~\ref{thm:KS} for $S_1'$ and $\tau^n(S_2)$ holds. Then by Theorem~\ref{thm:KS} we have
	\begin{equation}\label{e:hf}
		\rk \HF\big(S_1,\tau^n(S_2)\big)=\rk \HF\big(S_1',\tau^n(S_2)\big)=\rk H^*(C,\mathbb{Z}/2),
	\end{equation}
	where $C=S_1'\cap\tau^n(S_2)$ and $C^\iota=\gamma_1'\cap\tau^n(\gamma_2)$. Replacing $\gamma_1$ by $\gamma_1'$ in (\ref{e:cap}), we find that if $z\in C^\iota\setminus\{p,q_1\}$ then the corresponding connected component of $C$ is a sphere and thus its contribution to $\rk H^*(C,\mathbb{Z}/2)$ is $2$, and if $z\in\{p,q_1\}$ then its contribution is $1$. Since $\gamma_1'$ and $\tau^n(\gamma_2)$ are invariant under $\bar {\mathcal{R}}$, the intersection points of $\gamma_1'$ and $\tau^n(\gamma_2)$ appear in pairs except for $\{p,q_1\}\subset C^\iota$. So we get
$
		\rk \;H^*(C,\mathbb{Z}/2)=I\big([\gamma_1'],[\tau^n(\gamma_2)]\big).
$
	This, together with (\ref{e:hf}), implies the statement.	
\end{proof}

\subsection{Proof of Lemma \ref{multiintersect}}\label{proofmultint}
 { The case of $m>2$ is  similar. 
\begin{proof}[Proof of Lemma \ref{multiintersect}] The idea of the proof is to deform the generating curves $\gamma_i$ and $\tau^n(\gamma_j)$ into admissible curves which are transverse and in minimal position, and then apply Theorem~\ref{thm:KS} to conclude the equality~(\ref{e:mhf=i}).  In our configuration of the plumbing domain $P_\psi(S_1,\ldots,S_m)$ (resp. $P_\psi(\gamma_1,\ldots,\gamma_m)$), the disk cotangent bundles $D^*S_i$ (resp. $D^*\gamma_i$) are plumbed along a straight line, we can perform the deformation as in Figure~\ref{deform} on each plumbing domain $P_\psi(\gamma_i,\gamma_{i+1})$ of two adjacent unit disk cotangent bundle of circles to eliminate all bigons between $\gamma_i$ and $\tau^n(\gamma_j)$. Note that this process does not bring new bigons outside of $P_\psi(\gamma_i,\gamma_{i+1})$ in $P_\psi(\gamma_1,\ldots,\gamma_m)$ since the plumbing points throughout the deformation are fixed.  Once this was done, replacing Lemma~\ref{LmClean} with Lemma~\ref{mLmClean} in the proof of Lemma \ref{LmIntersection}, we finish the proof. 
\end{proof}
}

\section{Speculations on Measure theoretic entropy}\label{me}
Since Dehn twists are symplectomorphisms on the manifold $M$, they also preserve the Lebesgue measure $\mu$ on $M$.
We conjecture that the measure theoretic entropy of $\tau=\tau_1^k\tau_2^\ell, kl<0$ on $P_\psi(S_1,S_2)$ with respect to $\mu$ is also positive. However, there are certain substantial obstacles, similar to the ones we see for the standard map.

Let us first point out the difficulty in the standard map for the purpose of comparison with the current setting. In \cite{BXY} the authors adopted a special form of a class of 2 dimensional maps including the standard map: $F(x,y)=(y+L\psi(x),-x)$ defined on the torus, where $L$ is a parameter and $\psi$ is a generic smooth function on $\mathbb T^1$. The derivative matrix has the form $DF(x,y)=\left[\begin{array}{cc}
L\psi'(x)&1\\
-1&0
\end{array}\right]$.  For large $L$ in most part of the domain ($\{|\psi'(x)|>L^{-1/2}\}$), the matrix has a large eigenvalue with almost horizontal eigenvector. However, when $L\psi'(x)$ is close to zero, the matrix $DF$ is almost a rotation by $\pi/2$, which can mix expanding and contracting directions.

We shall see below that similar problem appears in our composite Dehn twists in a disguised form.
\subsection{Periodic points}
We consider the action of $\tau=\tau_1\tau_2^{-1}$ on $P_\psi(S_1,S_2)$ for $dim(S_1)=dim(S_2)=2$ and for any monotone $r(t)$. Before trying to calculate Lyapunov exponents on the whole manifold, we  first consider the Lyapunov exponents at the periodic points, which are given by the eigenvalues of the differential of iterations.
\begin{prop}
	 There exist finitely many $1$-dimensional periodic circles on $P_\psi(S_1,S_2)$ on which the Lyapunov exponent of  $\tau_1\tau_2^{-1}$  is zero.

\end{prop}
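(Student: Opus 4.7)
The plan is to push the problem down to the $2$-dimensional reduced model of Lemma~\ref{LmLTM} and classify inside that model the periodic orbits of $\mathcal{T}=T_1T_2^{-1}$ along which the Lyapunov exponent vanishes; lifting these through the $\mathrm{SO}(2)$-action of Theorem~\ref{foli} will then produce the claimed $1$-dimensional $\tau$-periodic circles in $P_\psi(S_1,S_2)$. Since $\tau$ commutes with the $\mathrm{SO}(2)$-action on the invariant set $P$, every $\tau$-periodic circle in $P$ that is generically one-dimensional is the $\mathrm{SO}(2)$-orbit of a non-$\mathrm{SO}(2)$-fixed $\mathcal{T}$-periodic point in $Y\subset\mathbb{T}^2$; the three $\mathrm{SO}(2)$-fixed points $(0,0)$, $(0,\pi)$, $(\pi,0)$ lift to $0$-dimensional orbits and are excluded from the count.

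The central calculation is of the linearization. A direct computation yields
\[
D\mathcal{T}(x,y)=\begin{pmatrix}1 & r'(y+r(x))\\ 0 & 1\end{pmatrix}\begin{pmatrix}1 & 0\\ r'(x) & 1\end{pmatrix}=\begin{pmatrix}1+r'(y+r(x))\,r'(x) & r'(y+r(x))\\ r'(x) & 1\end{pmatrix},
\]
with $\det D\mathcal{T}=1$ and $\operatorname{tr} D\mathcal{T}=2+r'(x)\,r'(y+r(x))$. The monotonicity of $r$ on $(-\epsilon,\epsilon)$ and the vanishing $r'\equiv 0$ outside that interval force $r'$ to have a constant sign, so the product $r'(x)\,r'(y+r(x))$ is always nonnegative and $D\mathcal{T}$ is either parabolic ($\operatorname{tr}=2$) or hyperbolic ($\operatorname{tr}>2$). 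Consequently, a $\mathcal{T}$-periodic orbit has vanishing Lyapunov exponent if and only if $D\mathcal{T}$ is parabolic at every orbit point, i.e.\ the orbit stays inside the \emph{degenerate locus}
\[
\Delta=\{(x,y)\in Y:r'(x)=0\ \text{or}\ r'(y+r(x))=0\}.
\]

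Because $\{r'=0\}$ on the circle is exactly $\{0\}\cup\{|t|\ge\epsilon\}$ (using both the flatness $r^{(k)}(0)=0$ and the fact that $r$ is constant outside $(-\epsilon,\epsilon)$), staying in $\Delta$ for every iterate is a very restrictive condition. A case analysis of $\mathcal{T}^i(x,y)$, using the explicit formulas for $T_1$ and $T_2$ and the distinguished values $r(0)=\pi$, $r\equiv 0$ on $[\epsilon,\pi]$, $r\equiv 2\pi$ on $[-\pi,-\epsilon]$, confines such orbits to a finite list of invariant closed curves through the coordinate values $\{0,\pi\}$ (for instance the period-$2$ orbit $\{(0,0),(0,\pi),(\pi,0)\}$ on which $D\mathcal{T}$ equals the identity, together with the analogous orbits on $\{y=0,\,|x|\geq\epsilon\}$ and $\{x=0,\,|y|\geq\epsilon\}$ that are rigidly translated by $\pi$). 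Their $\mathrm{SO}(2)$-lifts produce finitely many $1$-dimensional $\tau$-invariant circles in $P_\psi(S_1,S_2)$; along each of them $D\tau$ is the identity in the rotation-tangent direction (from the commutativity with $\mathrm{SO}(2)$) and parabolic in the transverse direction, so iterates grow at most polynomially and the Lyapunov exponent is zero.

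The main obstacle lies in the final enumeration: one must carefully show that the case analysis of orbits staying in $\Delta$ really is exhaustive, and that the continuous parameters that appear naively when the orbit visits the flat intervals $\{|t|\geq\epsilon\}$ (for example a one-parameter family $\{(x,0),(x+\pi,0)\}$ with $x\in[\epsilon,\pi]$) collapse to finitely many connected invariant circles after descending via the $\mathrm{SO}(2)$-quotient and applying the identification of Lemma~\ref{LmLTM}. This is precisely the mechanism that produces a discrete ``bad set'' of parabolic periodic circles, mirroring the elliptic islands of the standard map and illustrating the obstruction to proving positive metric entropy in Conjecture~\ref{mec}.
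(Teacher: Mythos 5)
Your plan is genuinely different from the paper's proof, and more importantly it misses the computation that is the whole content of the argument. You reduce to the $2$-dimensional model $Y\subset\mathbb{T}^2$, look for $\mathcal{T}$-periodic orbits confined to the degenerate locus $\{r'=0\}$, and then lift by the $\mathrm{SO}(2)$-action. The paper instead works directly in the four-dimensional plumbing space: it picks periodic points $p_0=\phi_1^{-1}(u,v)$ with $r(|u|),r(|v|)$ rational multiples of $\pi$ (so the orbit is closed), imposes the extra condition $\langle u,v\rangle=0$ with $u\neq 0\neq v$, and computes the full $4\times 4$ return-map differential
\[
d(\tau_1\tau_2^{-1})^{k+\ell-1}(u,v)=\begin{pmatrix}I & k\,r'(|v|)\,\hat v\hat v^t\\ \ell\, r'(|u|)\,\hat u\hat u^t & I\end{pmatrix}.
\]
When $\hat u\perp\hat v$ the two rank-one blocks act on orthogonal lines, $(M-I)^2=0$, and all eigenvalues equal $1$; the Lyapunov exponent is zero. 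The $\mathrm{SO}(2)$-orbit of such a $p_0$ (fixing $|u|,|v|$, and $\langle u,v\rangle=0$) is a $1$-dimensional circle. This orthogonality mechanism is entirely absent from your proposal, and it is the heart of the paper's proof. Note also that these circles are disjoint from yours: the rotation map preserves the angle between $u$ and $v$, so the $\mathrm{SO}(2)$-sweep of $X_\epsilon$ consists entirely of points with $u$ parallel to $v$, whereas the paper's circles have $u\perp v$ with both nonzero, hence lie off any rotate of the reduced $2$-torus. Your reduction structurally cannot detect them.

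Even taken on its own terms, your argument has two unjustified steps. First, for the orbits you find inside $X_\epsilon$, the vanishing of the Lyapunov exponent is a statement about the full $4$-dimensional differential, and you never compute the linearization transverse to the $\mathrm{SO}(2)$-sweep of $X_\epsilon$; the sentence ``parabolic in the transverse direction, so iterates grow at most polynomially'' is an assertion, not a proof. Second, the claimed ``collapse to finitely many connected invariant circles after descending via the $\mathrm{SO}(2)$-quotient'' is false: the family $\{(x,0),(x+\pi,0)\}$ for $x\in[\epsilon,\pi]$ already lives in the quotient (the quotient of the $\mathrm{SO}(2)$-sweep is precisely the $2$-dimensional model $Y$), so it does not collapse; its $\mathrm{SO}(2)$-sweep fills the zero section $S_1$ away from the plumbing point, a $2$-parameter continuum of periodic circles rather than a finite list. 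The proposition is existential, and the paper does not attempt the full classification your ``exhaustive case analysis'' would require; it simply exhibits the finitely many orthogonality circles described above.
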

\begin{proof}
We notice that $(q,v)\in T^*S_1$ is a periodic point of $\tau_1$ if and only if $r(|v|)\in \mathbb{Q}\pi$. Thus if a point $p_0=(q_2,s_2)=\psi(q_1,s_1)\in P_\psi(S_1,S_2), (q_1,s_1)\in T^*S_1, (q_2,s_2)\in T^*S_2$ satisfies $r(|s_i|)\in \mathbb{Q}\pi,r(|s_i|)>2\epsilon,i=1,2$, then $p_0$ is a periodic point under $\tau$. For such periodic points, we  explicitly calculate the differentials of the Dehn twist in local coordinates.

We consider the plumbing space $P_\psi(S_1,S_2)$ plumbed at the point $p=(0,0,1)\in S_i\subset \R^3$, given by local coordinates $\phi_i:S_i\to \R^2$ given by $\phi_i(x_1,x_2,x_3)=(x_1,x_2)$, and $T^*\phi_i(x_1,x_2,x_3,y_1,y_2,y_3)=(x_1,x_2,y_1-\frac{x_1y_3}{x_3},y_2-\frac{x_2y_3}{x_3})$. We also suppose that $r^\prime\le \frac{C}{\epsilon}$ for some constant $C$.

So if $\tau_1^k(p)\in U_1$ for $p\in U_1$, take $\phi_1(p)=(u,v)\in \R^4, u,v,\in\R^2$, then we have, in local coordinates
$$\tilde{\tau_1}^k(u,v):=\phi_1\tau_1^k\phi_1^{-1}(u,v)=\bigg(f_1(u,v) u+\frac{\sin(kr(\rho))}{\rho} v, \bigg(f_2(u,v)-\frac{f_2}{f_1}\bigg)u+\cos(kr(\rho))v\bigg),$$
where $\rho(u,v)=\sqrt{|v|^2-\frac{\langle u,v\rangle^2}{1-|u|^2}+\frac{\langle u,v\rangle^2|u|^2}{(1-|u|^2)^2}}$ and $$f_1(u,v)=\cos kr(\rho)-\frac{\sin kr(\rho)}{\rho}\frac{\langle u,v\rangle}{1-|u|^2}; f_2(u,v)=-\sin kr(\rho)\rho-\cos kr(\rho)\frac{\langle u,v\rangle}{1-|u|^2}.$$

Suppose $\tau_1^k(\phi_1(u,v))\in U_1$, then treating $u,v\in \R^2$ as column vectors, when $(u,v)=\phi_1^{-1}(p_0)$ with $p_0$ a periodic point of $\tau_1$ with period $k$, we have

\beq\begin{split}d\tilde{\tau_1}^k (u,v)=\left(\begin{pmatrix}I & 0\\0&I\end{pmatrix}-\begin{pmatrix} 0 & \frac{kr^\prime }{\rho}\\ -kr^\prime \rho & 0\end{pmatrix}\begin{pmatrix} \frac{\partial \rho}{\partial u}u^t&\frac{\partial \rho}{\partial v}u^t\\\frac{\partial \rho}{\partial u}v^t&\frac{\partial \rho}{\partial v}v^t\end{pmatrix}\right.
\\\left.+\frac{\langle u,v\rangle}{1-|u|^2}\begin{pmatrix}\frac{kr^\prime}{\rho} \frac{\partial \rho}{\partial u}u^t&\frac{kr^\prime}{\rho} \frac{\partial \rho}{\partial v}u^t\\-1+\frac{\langle u,v\rangle}{1-|u|^2}\frac{kr^\prime}{\rho} \frac{\partial \rho}{\partial u}u^t&\frac{\langle u,v\rangle}{1-|u|^2}\frac{kr^\prime}{\rho} \frac{\partial \rho}{\partial v}u^t\end{pmatrix}\right)\begin{pmatrix}du \\ dv\end{pmatrix}.\end{split}\eeq

Furthermore, if we suppose $\phi(p_0)=(u,v)$ with $\langle u,v\rangle=0$, then we have the exact equality $$d\tilde{\tau_1}^k (u,v)=\begin{pmatrix}I & kr^\prime(|v|)\hat{v}\hat{v}^t\\0&I\end{pmatrix}\begin{pmatrix}du \\ dv\end{pmatrix}$$ where we write $\hat{v}=\frac{v}{|v|}$.

Next we consider the action of the second Dehn twist $\tau_2$, suppose $p_0$ is a periodic point of $\tau_2$ of period $\ell$. Since the coordinates $(u,v)$ on $T^*S_1$ is identified to coordinates $(u^\prime, v^\prime)$ on $T^*S_2$ through $J: (u,v)\to (u^\prime,v^\prime)=(-v,u)$, we have
 $$d(J^{-1}\phi_2\tau_2\phi_2^{-1}J)^\ell(u^\prime,v^\prime)=\begin{pmatrix}0 & I\\-I&0\end{pmatrix}\begin{pmatrix}I & \ell r^\prime(|u|)\hat{u}\hat{u}^t\\0&I\end{pmatrix}\begin{pmatrix}0 &-I\\I&0\end{pmatrix}\begin{pmatrix}du \\ dv\end{pmatrix}$$$$=\begin{pmatrix}I & 0\\-\ell r^\prime(|u|)\hat{u}\hat{u}^t&I\end{pmatrix}\begin{pmatrix}du \\ dv\end{pmatrix}.$$

Now we  calculate the differential of an iteration of $\tau$ at $p_0\in T^*U_1\cap T^*U_2\subset P_\psi(S_1,S_2)$. Suppose $p_0=(q_2,s_2)=\psi(q_1,s_1)\in P_\psi(S_1,S_2), (q_1,s_1)\in T^*S_1, (q_2,s_2)\in T^*S_2$ with $r(|s_1|)=\frac{1}{k}\ge 2\epsilon ,r(|s_2|)=\frac{1}{l}\ge 2\epsilon$, then $p_0$ is also a periodic point of $\tau_1\tau_2^{-1}$, of period $k+|\ell|-1$. By the calculations above, we see that for $p_0=\phi^{-1}(u,v)$ with $\langle u,v\rangle = 0$, we have $$d(\tau_1\tau_2^{-1})^{k+\ell-1}(u,v)=d(\tau_1^k)d(\tau_2^{-l})(u,v)=\begin{pmatrix}I & kr^\prime(|v|)\hat{v}\hat{v}^t\\\ell r^\prime(|u|)\hat{u}\hat{u}^t&I\end{pmatrix}\begin{pmatrix}du \\ dv\end{pmatrix}.$$

Thus $d(\tau_1\tau_2^{-1})^{k+\ell-1}(u,v)$ has eigenvalues 1, so we have $\chi^+(p_0)=0$ for finitely many $p_0$.\end{proof}

\subsection{Hyperbolic cones near the elliptic points}
Now if we remove the condition $\langle u,v\rangle=0$ for $p_0$ and keep the conditions that $p_0$ is periodic under both $\tau_1$ and $\tau_2$ we would get a partially hyperbolic periodic point of $\tau$. However, we shall see that in a neighborhood of the elliptic periodic point, the eigenvectors of the hyperbolic periodic points ``flip", so the arguments we used in Section \ref{topent} to prove positive measure theoretic entropy on the subsystem cannot be applied to prove positive entropy on the whole system with respect to the Lebesgue measure.

Now for $p_0=(q_2,s_2)=\psi(q_1,s_1)\in P_\psi(S_1,S_2), (q_1,s_1)\in T^*S_1, (q_2,s_2)\in T^*S_2$ with $r(|s_i|\in \mathbb{Q},r(|s_i|)>2\epsilon,i=1,2$, we see from the second condition that $|u|,|v|<\epsilon$, which gives us
$$d\tilde{\tau_1}^k (u,v)=\left(\begin{pmatrix}I & kr^\prime(|v|)\hat{v}\hat{v}^t\\0&I\end{pmatrix}+O(\langle u,v\rangle)\right)\begin{pmatrix}du \\ dv\end{pmatrix} .$$

Applying the same argument for $\tau_2$, we have
$$d\tilde{\tau_2}^\ell (u,v)=\left(\begin{pmatrix}I & 0\\-\ell r^\prime(|u|)\hat{u}\hat{u}^t&I\end{pmatrix}+O(\langle u,v\rangle)\right)\begin{pmatrix}du \\ dv\end{pmatrix} .$$

Thus $p_0$ as also a periodic point of $\tau_1\tau_2^{-1}$ of period $k+|\ell|-1$ has differential $$d(\tau_1\tau_2^{-1})^{k+\ell-1}(u,v)=\left(\begin{pmatrix}\ell+k\ell r^\prime(|u|)r^\prime(|v|)\langle \hat{u},\hat{v}\rangle \hat{v}\hat{u}^t  & kr^\prime(|v|)\hat{v}\hat{v}^t\\\ell r^\prime(|u|)\hat{u}\hat{u}^t&I\end{pmatrix}+O(\langle u,v\rangle/\epsilon)\right)\begin{pmatrix}du \\ dv\end{pmatrix}.$$

Thus for $p_0=\phi_1(u,v)$ with $\langle u,v\rangle\neq 0$, we have $tr(d(\tau_1\tau_2^{-1})^{k+\ell-1})>4$ which shows that $p_0$ has positive Lyapunov exponent. The eigenvalues of $d(\tau_1\tau_2^{-1})^{k+\ell-1}$ are approximately $$\lambda_1=1+\ell r^\prime(|u|)\langle \hat{u},\hat{v}\rangle \frac{k\ell r^\prime(|u|)r^\prime(|v|)\langle \hat{u},\hat{v}\rangle+\sqrt{(k\ell r^\prime(|u|)r^\prime(|v|)\langle \hat{u},\hat{v}\rangle)^2+4(kr^\prime(|v|))^2}}{2},$$$$\lambda_2=\lambda_3=1,$$$$ \lambda_4=1+\ell r^\prime(|u|)\langle \hat{u},\hat{v}\rangle \frac{k\ell r^\prime(|u|)r^\prime(|v|)\langle \hat{u},\hat{v}\rangle-\sqrt{(k\ell r^\prime(|u|)r^\prime(|v|)\langle \hat{u},\hat{v}\rangle)^2+4(kr^\prime(|v|))^2}}{2}$$ with eigenvectors $$w_1=\begin{pmatrix}\frac{k\ell r^\prime(|u|)r^\prime(|v|)\langle \hat{u},\hat{v}\rangle+\sqrt{(k\ell r^\prime(|u|)r^\prime(|v|)\langle \hat{u},\hat{v}\rangle)^2+4(kr^\prime(|v|))^2}}{2} \hat{v}\\\hat{u}\end{pmatrix}$$ and $$w_4=\begin{pmatrix}\frac{k\ell r^\prime(|u|)r^\prime(|v|)\langle \hat{u},\hat{v}\rangle-\sqrt{(k\ell r^\prime(|u|)r^\prime(|v|)\langle \hat{u},\hat{v}\rangle)^2+4(kr^\prime(|v|))^2}}{2} \hat{v}\\ \hat{u}\end{pmatrix}.$$

So we see that if $\langle u,v\rangle >0$ we have $\lambda_1>1$ and $\lambda_4<1$, and when $\langle u,v\rangle <0$ we have $\lambda_1<1$ and $\lambda_4>1$. However the corresponding eigenvectors $w_1$ and $w_4$ is continuous with respect to $u,v$. Thus if a tangent vector returns to the neighborhood of an elliptic periodic point, it can start shrinking even if it was originally expanding.

\appendix

\section{Topological entropy and non-uniformly hyperbolic dynamics}\label{apd:etp}
Here we introduce the basic definitions of topological and measure theoretic entropy and its relation to hyperbolicity of the system.
The contents of this subsection are classical and can be found in e.g. Chapters 3, 4 and Supplement of \cite{Katok}.

\begin{defi}[Topological entropy] The topological entropy of a map $f:X\to X$ in a compact metric space $(X,d)$ is given by $$h_d(f)=\lim_{\delta\to 0} \limsup_{n\to \infty} \frac{1}{n}\log S_d(f,\delta,n)$$ where $S_d(f,\delta,n)$ is the minimal cardinality of sets of balls $B(x,\delta, d^f_n)$ that covers the space $X$.  Here, the balls $B(x,\delta, d^f_n)=\{y\in X: d^f_n(x,y)<\delta\}$ are taken with respect to the metric $d^f_n(x,y)=\max_{0\le i\le n-1} d(f^i(x),f^i(y))$.
\end{defi}

The topological entropy has the following properties.
\begin{prop} \label{proptop}
	\begin{enumerate}
		\item The topological entropy does not depend on the metric $d$. If $d^\prime$ is a metric on $X$ such that $(X,d)$ and $(X,d^\prime)$ has the same topology then $h_d(f)=h_{d^\prime}(f)$. Thus we  define $h_{top}(f)=h_d(f)$ for any metric $d$ compatible with the topology.
		\item The topological entropy is invariant under conjugacy, i.e. $h_{top}(f)=h_{top}(g^{-1}fg)$ for any homeomorphism $g:X\to Y$.
		\item If $\Lambda$ is a closed $f$-invariant subset of $X$, then $h_{top}(f\mid_\Lambda)\le h_{top}(f)$.
		\item $h_{top}(f^m)=|m|h_{top}(f)$.
	\end{enumerate}
\end{prop}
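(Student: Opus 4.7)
The plan is to work throughout with the Bowen ball $B(x,\delta,d_n^f)$ from the definition, freely switching to the equivalent characterization via $(n,\delta)$-separated or $(n,\delta)$-spanning sets when convenient (on a compact space these all yield the same number $h_d(f)$). Items (1) and (2) are pure metric manipulations. For (1), two metrics $d,d'$ compatible with the topology on compact $X$ are automatically uniformly equivalent, so for every $\epsilon>0$ there is $\delta>0$ with $d(x,y)<\delta\Rightarrow d'(x,y)<\epsilon$; applied coordinatewise in time this yields $B(x,\delta,d_n^f)\subseteq B(x,\epsilon,(d')_n^f)$ uniformly in $x$ and $n$, hence $S_{d'}(f,\epsilon,n)\le S_d(f,\delta,n)$. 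Taking $\limsup_n$ and letting $\epsilon\to 0$ (which forces $\delta\to 0$) gives $h_{d'}(f)\le h_d(f)$, and symmetry closes the argument. For (2), pull back a compatible metric $d_Y$ on $Y$ to $d_X:=g^*d_Y$ on $X$; then $g$ is an isometry for the dynamical metrics and sets up a bijection between optimal covers for $f$ and for $g^{-1}fg$, giving $h_{d_X}(f)=h_{d_Y}(g^{-1}fg)$, after which (1) removes the dependence on the choice of metric.

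For (3), I would pass to the separated-set formulation: any $(n,\epsilon)$-separated subset $E\subseteq\Lambda$ is automatically $(n,\epsilon)$-separated in $X$, so the maximal cardinalities satisfy $N_d(f|_\Lambda,\epsilon,n)\le N_d(f,\epsilon,n)$, and taking $\limsup_n$ and $\epsilon\to 0$ yields the desired inequality. For (4), a two-sided comparison of dynamical balls suffices. One direction is immediate from $d_n^{f^m}(x,y)\le d_{nm}^f(x,y)$, which gives $S_d(f^m,\delta,n)\le S_d(f,\delta,nm)$ and hence $h_{top}(f^m)\le m\,h_{top}(f)$ after dividing by $n$ and taking limits. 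For the reverse, compactness of $X$ gives uniform continuity of $f,f^2,\ldots,f^{m-1}$, so for each $\delta>0$ some $\delta'>0$ ensures $d(x,y)<\delta'\Rightarrow d(f^i(x),f^i(y))<\delta$ for $0\le i<m$; an $(n,\delta')$-spanning set for $f^m$ is then an $(nm,\delta)$-spanning set for $f$, giving $S_d(f,\delta,nm)\le S_d(f^m,\delta',n)$ and, after dividing by $n$ and sending $\delta'\to 0$, the bound $m\,h_{top}(f)\le h_{top}(f^m)$.

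The only genuine snag is (3) phrased via covers: intersecting a Bowen-ball cover of $X$ with $\Lambda$ does not produce a cover by Bowen balls centered in $\Lambda$, so one either has to recenter (using $B(y,\delta,d_n^f)\cap\Lambda\subseteq B(y',2\delta,d_n^f)\cap\Lambda$ for any $y'\in B(y,\delta,d_n^f)\cap\Lambda$) or, much more cleanly, switch to the separated-set characterization as above. Every remaining estimate reduces to elementary containments of dynamical balls combined with compactness, and nothing in the argument uses the symplectic structure of the earlier sections.
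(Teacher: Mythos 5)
Your proof is correct; note that the paper supplies no proof of Proposition~\ref{proptop} at all, stating it as a collection of classical facts with a reference to \cite{Katok}, so there is no ``paper proof'' to compare against --- your argument is the standard textbook one. All four items reduce, as you say, to elementary containments of dynamical (Bowen) balls plus compactness: uniform equivalence of topologically equivalent metrics on a compact space for (1), pulling back the metric to make $g$ an isometry intertwining the dynamics for (2), $(n,\epsilon)$-separated sets for (3), and the two-sided comparison $d_n^{f^m}\le d_{nm}^f$ together with uniform continuity of $f,\dots,f^{m-1}$ for (4). You also correctly identify and sidestep the only real pitfall, namely that intersecting a Bowen-ball cover of $X$ with $\Lambda$ does not give a cover by Bowen balls centered in $\Lambda$, which is most cleanly avoided by switching to the separated-set formulation.

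One small step you elide in item~(4): after dividing by $n$ you pass from $\limsup_n \frac{1}{nm}\log S_d(f,\delta,nm)$ to $\limsup_k \frac{1}{k}\log S_d(f,\delta,k)$, i.e.\ from the $\limsup$ along the arithmetic progression $k=nm$ to the $\limsup$ over all $k$. One direction is trivial (a subsequence), but the other uses the monotonicity of $n\mapsto S_d(f,\delta,n)$ to control the integers between $nm$ and $(n+1)m$. This is routine but should be mentioned if the proof is meant to be self-contained.
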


Furthermore, the topological entropy is bounded below by the action on the cohomology ring.

\begin{thm}[Yomdin's inequality \cite{Yomdin}]\label{yom}
Let $X$ be a compact smooth manifold and $f :
X \to X$ be a $C^\infty$ diffeomorphism. Then we have
$$h_{top}(f) \ge \log \rho(f^*:H^*(X)\to H^*(X))$$
where $\rho(\cdot)$ denotes the largest absolute value of eigenvalues of a linear map.
\end{thm}

Another description of entropy is the measure theoretic entropy, given with respect to a measure. This is a more global description of the complexity of a dynamical system.
\begin{defi}[Measure theoretic entropy]A \emph{measurable partition} of a probability space $(X,\mathcal{B},\mu)$ is a collection of measurable subsets $\xi=\{C_\alpha\in\mathcal{B}\ |\ \alpha\in I\}$ such that $\mu(\cup C_\alpha)=1$ and $\mu(C_\alpha\cap C_\beta)=0\text{ or }\mu(C_\alpha\Delta C_\beta)=0,\forall \alpha\neq \beta$.
	
	The entropy of a measurable partition $\xi=\{C_\alpha\in\mathcal{B}\ |\ \alpha\in I\}$ is given by $$H(\xi)=-\sum_{\alpha\in I}\mu(C_\alpha)\log(\mu(C_\alpha)).$$
	
	The entropy of a measure-preserving transformation $T$ on $(X,\mu)$ is defined as $$h_\mu(T)=\sup_{\xi}\lim_{n\to\infty} \frac{1}{n}H(\{\bigvee_{i=1}^n T^{-i+1}C_i \ |\ C_i\in\xi\})$$ where $\xi$ is taken over any measurable partition with $H(\xi)<\infty$.
	
\end{defi}

The measure theoretic entropy and the topological entropy of are related by the variational principle.
\begin{thm}[Variational Principle, see e.g. \cite{brin} Theorem 9.5.4]Let $X$ be a compact metric space and let $f:X\to X$ be a homeomorphism. Set $M(f)$ to be the set of Borel probability measures that are $f$-invariant. Then we have $$h_{top}(f)=\sup_{\mu\in M(f)} h_\mu(f).$$
\end{thm}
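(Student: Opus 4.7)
The Variational Principle is a classical theorem, and the plan is to establish the two inequalities $\sup_{\mu\in M(f)} h_\mu(f) \le h_{top}(f)$ and $\sup_{\mu\in M(f)} h_\mu(f) \ge h_{top}(f)$ separately, using the Misiurewicz-style argument for the second (and harder) direction. Throughout, I would replace $S_d(f,\delta,n)$ by the equivalent quantities $r_n(\delta,f)$ (minimal $(n,\delta)$-spanning sets) and $s_n(\delta,f)$ (maximal $(n,\delta)$-separated sets), which grow at the same exponential rate and are easier to compare with partition entropies.

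For the first inequality, I would fix $\mu\in M(f)$ and a finite measurable partition $\xi=\{C_1,\ldots,C_k\}$ with $H(\xi)<\infty$. The key step is to approximate $\xi$ by a partition whose elements are almost contained in Bowen balls: by regularity of $\mu$, one can find for any $\eta>0$ a partition $\eta$-close to $\xi$ in the $H$-metric whose atoms have diameter less than a prescribed $\delta$. Applied to $\xi_n=\bigvee_{i=0}^{n-1}f^{-i}\xi$, this forces each atom to be contained in some ball $B(x,\delta,d_n^f)$, so the number of atoms of positive measure is at most $s_n(\delta/2,f)$. Combining this with the standard inequality $H_\mu(\xi_n)\le \log\#\{\text{atoms of positive measure}\}$ (by Jensen / concavity of $-x\log x$) yields $h_\mu(f,\xi)\le \limsup \tfrac{1}{n}\log s_n(\delta/2,f)$, and passing to $\delta\to 0$ and then to $\sup_\xi$ gives $h_\mu(f)\le h_{top}(f)$.

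For the second inequality, I would follow Misiurewicz. Fix $\delta>0$ and a maximal $(n,\delta)$-separated set $E_n\subset X$ with $|E_n|=s_n(\delta,f)$. Define empirical measures
\[
\sigma_n=\frac{1}{|E_n|}\sum_{x\in E_n}\delta_x,\qquad \mu_n=\frac{1}{n}\sum_{i=0}^{n-1}f^i_*\sigma_n.
\]
By compactness of $X$ and of the space of probability measures in the weak-$*$ topology, pass to a subsequence $\mu_{n_j}\to\mu$; a standard averaging argument shows $\mu$ is $f$-invariant. Next, choose a finite partition $\xi$ whose atoms have diameter less than $\delta$ and whose boundaries have $\mu$-measure zero (possible by perturbing the atoms of any initial partition). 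Because the points of $E_n$ are $(n,\delta)$-separated, each atom of $\xi_n=\bigvee_{i=0}^{n-1}f^{-i}\xi$ contains at most one element of $E_n$, so $H_{\sigma_n}(\xi_n)=\log|E_n|$. A convexity and bookkeeping argument (splitting $\{0,\ldots,n-1\}$ into arithmetic progressions of length $q$ and using concavity of $H$) relates $\tfrac{1}{n}H_{\sigma_n}(\xi_n)$ to $\tfrac{1}{q}H_{\mu_n}(\xi_q)$ up to an error $O(\tfrac{q\log k}{n})$; letting $n_j\to\infty$ and using $\mu$-continuity of $\partial\xi$ gives $\limsup\tfrac{1}{n}\log s_n(\delta,f)\le \tfrac{1}{q}H_\mu(\xi_q)$ for all $q$, hence $\le h_\mu(f,\xi)\le h_\mu(f)$. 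Taking $\delta\to 0$ completes the proof.

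The main technical obstacle I anticipate is the careful combinatorial estimate in the second step: one must relate $H_{\sigma_n}(\xi_n)=\log|E_n|$ to partition entropies of the averaged measure $\mu_n$ with an error term that vanishes in the limit. This requires the arithmetic-progression decomposition together with the inequality $H_\nu(\alpha\vee\beta)\le H_\nu(\alpha)+H_\nu(\beta)$ and upper semicontinuity of $\nu\mapsto H_\nu(\xi)$ at measures giving zero mass to $\partial\xi$. Once this estimate is in hand, both inequalities assemble cleanly into the statement.
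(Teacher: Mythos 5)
The paper does not prove this theorem: the Variational Principle is stated in Appendix~\ref{apd:etp} as a classical background fact, with the whole subsection attributed to Chapters 3, 4 and the Supplement of \cite{Katok}. So there is no proof in the paper to compare against; you are supplying a proof where the authors only gave a citation.

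Your outline is the standard Misiurewicz argument, and the harder direction $\sup_\mu h_\mu(f) \ge h_{top}(f)$ is sketched correctly: empirical measures on a maximal $(n,\delta)$-separated set, averaging, weak-$*$ limit, a $\mu$-boundary-null partition with small-diameter atoms, $H_{\sigma_n}(\xi_n)=\log|E_n|$, the arithmetic-progression decomposition with concavity of entropy, and upper semicontinuity at boundary-null partitions. That is precisely the classical proof.

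The easier direction $h_\mu(f) \le h_{top}(f)$ has a real gap as you have written it. You claim that if the atoms of $\xi$ have $d$-diameter $<\delta$ then the atoms of $\xi_n=\bigvee_{i=0}^{n-1}f^{-i}\xi$ have $d^f_n$-diameter $<\delta$ (true) and hence the number of nonempty atoms is at most $s_n(\delta/2,f)$ (not justified). Small diameter of the atoms does not prevent two distinct atoms from lying in the same Bowen ball: disjointness of atoms gives no lower bound on their mutual $d^f_n$-distance, so picking one point per atom does not produce a separated set, and the count can a priori exceed $s_n(\delta/2,f)$. The standard fix is to work with a finite cover rather than a partition: choose compact $B_i\subset C_i$ with $\mu(C_i\setminus B_i)$ small, set $\beta=\{B_0,B_1,\ldots,B_k\}$ with $B_0$ the complement, observe that each $(n,\delta)$-Bowen ball (for $\delta$ less than half the minimal gap between the $B_i$, $i\ge 1$) meets at most $2^n$ atoms of $\beta_n$, so $H_\mu(\beta_n)\le n\log 2+\log r_n(\delta,f)$, giving $h_\mu(f,\beta)\le h_{top}(f)+\log 2$; the additive $\log 2$ is then removed by applying the inequality to $f^m$ and letting $m\to\infty$. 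With that replacement the proof is complete and agrees with the textbook argument the paper cites.
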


One way of calculating measure theoretic entropy is by examining the ``hyperbolicity" of the map $f$. As we shall see in Pesin entropy formula, a diffeomorphism $f$ has positive entropy if it possesses some hyperbolicity. A diffeomorphism $f:X\to X$ on a Riemannian manifold is said to be {\it hyperbolic} if a every point $p$, $Df$ expands a subspace $E^+(p)\subset T_pX$ at rate $\lambda>1$ and contracts a subspace $E^-(p)\subset T_pX$ at rate $1/\lambda<1$ such that $T_pX=E^+(p)\oplus E^-(p)$ and $E^+, E^-$ are invariant under $f$.

A more precise description of hyperbolicity at a generic point is given by the Lyapunov exponent which describes the expansion rate of $f$ on a tangent vector $(p,u)$.
\begin{defi}\label{DefLyap}Let $X$ be a Riemannian manifold with $f:X\to X$ a diffeomorphism of $X$. The \emph{Lyapunov exponent} of $f$ at a point $(p,u)\in TX$ is defined as $$\chi^+(p,u)=\lim_{m\to \infty}\frac{1}{m} \log\|df^{m}_p(u)\|$$ if the limit exists.
\end{defi}

Although {\it a priori} the Lyapunov exponent of a tangent vector may not exist, Oseledets \cite{Ose} proved that for almost every point on the manifold $X$ the Lyapunov exponent at any vector is well defined, and we can split the tangent space into subspaces on which $f$ has uniform hyperbolicity.

\begin{thm}[Oseledets Multiplicative Ergodic Theorem, \cite{Ose}]Let $X$ be a Riemannian manifold with $f:X\to X$ a diffeomorphism, let $\mu$ be a $f$-invariant Borel measure on $X$ such that $\log^+\|Df(\cdot)\|\in L^1(X,\mu)$, then there exists a set $Y\subset X$ with $\mu(X\backslash Y)=0$ such that for each $p\in Y$, there exists a decomposition $T_pX=\oplus_{i=1}^{k(p)}H_i(p)$ that is invariant under $Df$ and a set of $f$-invariant functions $\chi_1(p)<\chi_2(p)<...<\chi_{k(p)}(p)$ $($called the \emph{Lyapunov exponents} of $p)$ such that the Lyapunov exponent exists for any $u\in T_pX$ and $\chi^+(p,u)=\pm\chi_i(p),\ \forall\  u\in H_i(p).$ The functions $\chi_i(\cdot),k_i(\cdot),H_i(\cdot)$ depend measurably on the base points.
\end{thm}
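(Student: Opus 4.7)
The plan is to prove this via the subadditive ergodic theorem of Kingman combined with standard singular value / exterior algebra arguments, obtaining first the forward filtration, then (using invertibility of $f$) upgrading it to the direct sum decomposition. Without loss of generality I may assume $\mu$ is ergodic, since for a general $\mu$-invariant measure one applies the ergodic decomposition and runs the argument on each ergodic component; the functions $k(p)$ and $\chi_i(p)$ will then be $f$-invariant but in general non-constant.

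First I would set up the matrix cocycle: fixing a measurable trivialization of $TX$ along orbits (or passing to the Grassmann/frame bundle), the differentials satisfy the cocycle identity $Df^{m+n}(p)=Df^m(f^n p)\circ Df^n(p)$. The integrability hypothesis $\log^+\|Df\|\in L^1$ together with the identity $\log\|Df^{-n}(p)\|\ge -\log\|Df^n(f^{-n}p)\|$ let me apply Kingman's subadditive ergodic theorem to the sequence $\varphi_n(p)=\log\|Df^n(p)\|$, which is subadditive along orbits. This yields existence of $\chi_{\max}(p)=\lim\frac{1}{n}\log\|Df^n(p)\|$ for $\mu$-a.e.\ $p$, with $\chi_{\max}$ an $f$-invariant function.

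Next I would extract all the exponents simultaneously. Applying Kingman to $\log\|\wedge^k Df^n(p)\|$ for $k=1,\dots,\dim X$ gives $\mu$-a.e.\ existence of the limits $\Lambda_k(p)=\lim\frac{1}{n}\log\|\wedge^k Df^n(p)\|$. A classical singular-value computation shows that the differences $\Lambda_k-\Lambda_{k-1}$ (with multiplicity) are precisely the Lyapunov exponents listed with multiplicity, and consequently, for $\mu$-a.e.\ $p$, the singular values $\sigma_1^{(n)}(p)\ge\cdots\ge\sigma_d^{(n)}(p)$ of $Df^n(p)$ satisfy $\frac{1}{n}\log\sigma_i^{(n)}(p)\to \lambda_i(p)$, where $\lambda_1\le\cdots\le\lambda_d$ is the list of the $\chi_i$'s with multiplicity $\dim H_i$. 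From here one obtains the forward filtration $\{0\}=V_0(p)\subsetneq V_1(p)\subsetneq\cdots\subsetneq V_{k(p)}(p)=T_pX$ characterized by $V_j(p)=\{u\in T_pX:\chi^+(p,u)\le \chi_j(p)\}$, and the corresponding subspace $V_j(p)$ is the limit (in the Grassmannian) of the span of the top-$(d-\dim V_{j-1})$ right singular vectors of $Df^n(p)$.

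The final and most delicate step is to promote the filtration to a splitting $T_pX=\bigoplus_{i=1}^{k(p)}H_i(p)$. Here I would apply the same construction to $f^{-1}$, which (using $\log^+\|Df^{-1}\|\in L^1$, which in the compact / bounded-distortion setting follows from the hypothesis) produces a backward filtration $\{0\}=W_{k(p)+1}(p)\subsetneq W_{k(p)}(p)\subsetneq\cdots\subsetneq W_1(p)=T_pX$ with $W_j(p)=\{u:\chi^-(p,u)\ge \chi_j(p)\}$. The Oseledets subspaces are then defined by $H_i(p)=V_i(p)\cap W_i(p)$; $Df$-invariance is immediate from the definition, and the fact that this really is a direct sum decomposition (rather than having lower-dimensional intersections) comes from a dimension count, which in turn rests on the \emph{nonuniform hyperbolicity estimate} that the angle between $V_{i-1}(f^n p)$ and $W_i(f^n p)$ has only subexponential decay along the orbit.

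The main obstacle is precisely this angle estimate, sometimes called the \emph{tempered distortion} or Oseledets regularity lemma: one must show that for $\mu$-a.e.\ $p$ and each $i$, $\limsup\frac{1}{n}\log\sin\angle(V_{i-1}(f^n p),W_i(f^n p))\ge 0$. The argument goes through the Borel--Cantelli lemma applied to exceptional sets where the angle is small, combined with the Birkhoff ergodic theorem to control the frequency of bad returns; invariance of $\mu$ together with the existence of $\Lambda_k$ forces the set of orbits on which this fails to have measure zero. Once this is in place, the direct sum decomposition, the equality $\chi^+(p,u)=\chi_i(p)$ for $u\in H_i(p)\setminus\{0\}$, and the $f$-invariance of the functions $\chi_i$ and $k$ all follow by routine arguments from the definitions.
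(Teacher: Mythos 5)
The paper does not prove this theorem: it is stated as a citation to Katok--Hasselblatt~\cite{Katok}, so there is no internal argument to compare against. Your outline is the standard route to the two-sided Oseledets theorem (ergodic decomposition, Kingman applied to the subadditive sequences $\log\|\wedge^k Df^n\|$, a singular-value argument to extract exponents with multiplicity, the forward and backward filtrations $V_\bullet$ and $W_\bullet$, and finally the tempered-angle estimate yielding the splitting $H_i=V_i\cap W_i$), and as a sketch it is essentially correct.

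Two cautions are worth flagging. First, the splitting version of the theorem (as opposed to the one-sided filtration) genuinely requires the additional hypothesis $\log^+\|Df^{-1}\|\in L^1(\mu)$, which the paper's statement omits; you notice this, but the inequality you invoke, $\log\|Df^{-n}(p)\|\ge-\log\|Df^n(f^{-n}p)\|$, bounds $\log\|Df^{-n}\|$ from \emph{below} and therefore does not yield the needed integrability of $\log^+\|Df^{-1}\|$ --- one must either assume it or work on a compact $X$, as you note. Second, the paper's phrase ``$\chi^+(p,u)=\pm\chi_i(p)$'' should be read as the two-sided assertion $\lim_{n\to\pm\infty}\tfrac{1}{n}\log\|Df^n_p(u)\|=\chi_i(p)$ for $u\in H_i(p)\setminus\{0\}$, which is exactly what the construction $H_i=V_i\cap W_i$ produces once the angle estimate is in place. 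The angle estimate itself is the technical heart of the proof and your paragraph merely names it; in a complete write-up this is where most of the work goes (e.g.\ via Borel--Cantelli together with the existence of $\Lambda_k$, or via the Raghunathan/Ruelle--Walters argument), but as an outline the structure you give is the right one.
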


Furthermore, we can use the splitting of the tangent spaces to generate stable and unstable local submanifolds of a non-uniformly hyperbolic map.  {The theorem holds for manifolds of arbitrary dimension under some conditions of the Lyapunov exponents. Here}, to avoid technical details, we only state the result for Riemannian surfaces which we shall use in Section~\ref{mtwist}. More details on this subject can be found in  \cite[Chap. 7 and 8]{bar}.

\begin{thm}[{Pesin's stable manifold theorem, see  \cite[Theorem~7.1]{bar}}] \label{thm:Pesin} Let $X$ be a Riemannian surface and $f:X\to X$ a 
$C^{1+\alpha}$ diffeomorphism preserving an invariant measure $\mu$ such that the Lyapunov exponent $\chi_1(p)<0$ for $\mu$-almost every $p\in X$, then for $\epsilon$ sufficiently small, there exist  measurable functions $C_\epsilon: X\to \R$ and $\lambda:X\to (0,1)$, such that for almost every $p\in X$, there exists a stable manifold~$($curve$)$ $W^s_f(p)$ such that $x\in W^s_f(p), T_pW^s(p)=H_1(p)$, and for any $q\in W^s_f(p)$ we have $$\mathrm{dist}(f^n(p),f^n(q))\le C_\epsilon(x)(\lambda+\epsilon)^n(x)\mathrm{ dist}(x,y)$$
where $C_\epsilon(f^m(x))\leq C_\epsilon(x) e^{2\epsilon|m|}, \ m\in \Z. $
Furthermore, the stable manifolds satisfy $W^s(q)=W^s(p)$ for any $q\in W^s(p)$ and $W^s(q)\cap W^s(p)=\varnothing$ if $q\notin W^s(p)$.
\end{thm}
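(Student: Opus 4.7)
The plan is to establish this along the classical lines of Pesin theory, adapted to the two-dimensional setting where the stable subspace is a single line. I would first invoke Oseledets' theorem applied to the cocycle $Df$: this gives, on a full-measure $f$-invariant set $Y$, a measurable splitting $T_pX = H_1(p)\oplus H_2(p)$ together with the Lyapunov exponents $\chi_1(p)<0\leq \chi_2(p)$. The strategy is then to construct the stable curve $W^s(p)$ as a local graph over the line $H_1(p)$ via a graph transform argument in suitably chosen coordinates.

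Second, I would introduce the Pesin regular (or Lyapunov-regular) sets $\Lambda_{\varepsilon,k}$ on which the nonuniform hyperbolicity becomes quantitatively uniform: there is a single $\lambda_0\in(0,1)$ and a measurable function $C_0(p)\leq k$ with $\|Df^n|_{H_1(p)}\|\leq C_0(p)\lambda_0^n$, the angle between $H_1(p)$ and $H_2(p)$ is bounded below by $1/k$, and the constants change along orbits only in a tempered way ($C_0(f^n p)\leq C_0(p)e^{\varepsilon |n|}$). Tempering of the cocycle (Pesin's $\varepsilon$-reduction lemma) is what allows the Lyapunov charts to vary slowly enough along the orbit to make perturbation arguments work, and $\bigcup_{\varepsilon,k}\Lambda_{\varepsilon,k}$ has full measure, which is what we need to reach the a.e.\ conclusion.

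Third, in the Lyapunov charts centered at $p, f(p), f^2(p),\ldots$ the map $f$ becomes, on a small ball of radius $r(p)$, a perturbation of a diagonal linear map with one contracting entry of size at most $\lambda_0$ and a uniformly bounded expanding entry, with the perturbation bounded by $\varepsilon$-small factors in $C^1$. I would then apply the standard Hadamard--Perron graph transform: on the space of Lipschitz graphs over the horizontal direction with slope $\leq 1$ and graph norm $\leq r(p)$, the pullback of such a graph by $f$ defines an operator that, thanks to the contraction/expansion gap, is a contraction in the $C^0$ norm. Its unique fixed point is a Lipschitz (in fact $C^1$, by standard bootstrapping) local stable curve $W^s_{\mathrm{loc}}(p)$ tangent to $H_1(p)$. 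The exponential contraction estimate $\mathrm{dist}(f^n p, f^n q)\leq C(p)\lambda(p)^n \mathrm{dist}(p,q)$ follows directly from the definition of the graph together with the tempered growth of chart sizes. The global stable manifold is then $W^s(p)=\bigcup_{n\geq 0} f^{-n}\bigl(W^s_{\mathrm{loc}}(f^n p)\bigr)$; the laminar property $W^s(q)=W^s(p)$ for $q\in W^s(p)$ and disjointness otherwise are immediate from this characterization and the uniqueness in the graph transform.

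The main obstacle is handling the nonuniformity: the constants $C(p)$ and chart radii $r(p)$ degenerate on a measure-zero set, so one cannot work on a compact piece and must instead show that the graph transform is simultaneously well-defined for almost every orbit. The technical heart is therefore the tempered distortion lemma and the construction of Lyapunov charts; once these are in place, the surface case is lighter than the higher-dimensional version because the stable distribution is a line, so no internal dimension-counting for the graph is required and one-dimensional $C^1$ regularity of the fixed point comes essentially for free. For our application in Theorem~\ref{foli} the full theorem in~\cite{bar} is used as a black box after verifying positivity of Lyapunov exponents via Lemma~\ref{lya} and Lemma~\ref{fullm}.
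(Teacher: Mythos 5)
The paper does not prove this statement at all: Theorem~\ref{thm:Pesin} is quoted verbatim as a classical result from Barreira--Pesin~\cite{bar} and used as a black box in the proof of Theorem~\ref{foli}, so there is no in-paper argument to compare against. Your sketch is a faithful outline of the standard Pesin-theoretic proof (Oseledets splitting, regular sets $\Lambda_{\varepsilon,k}$ with tempered distortion, Lyapunov charts, Hadamard--Perron graph transform, and taking preimages to globalize), and you correctly identify the tempering lemma as the technical heart and note the simplifications available in the surface case where $H_1(p)$ is one-dimensional. One small caution if you were to flesh this out: the statement as written in the paper has typographical inconsistencies ($p$ vs.\ $x$, and $q$ vs.\ $y$ in the contraction estimate) that should be fixed before proving it, and the claimed disjointness $W^s(q)\cap W^s(p)=\varnothing$ for $q\notin W^s(p)$ holds only on the full-measure regular set and should be phrased that way; you implicitly handle this by restricting to the Pesin set, which is the right move.
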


Similarly, if we consider local stable manifolds $W^s_{f^{-1}}(p)$ of $f^{-1}$, then the action of $f$ expands the distance of points on the manifold exponentially. We call them the local unstable manifolds of $f$ and denote by $W^u_f(p)=W^s_{f^{-1}}(p)$.

The Lyapunov exponents  {are} connected with the entropy of a map by the Pesin entropy formula.
\begin{thm}[{Pesin entropy formula, see \cite[Theorem 9.25]{bar}}]\label{pef}Let $X$ be a compact Riemannian manifold with $f:X\to X$ a $C^{1+\alpha}$ diffeomorphism, suppose that $f$ preserves a smooth Borel measure $\mu$ on $X$ that is equivalent to the volume, then we have $$h_\mu(f)=\int \chi^+  d\mu$$ where $\chi^+(p)=\sum_{\chi_i(p)>0}\chi_i(p)$ with $\chi_i$ the Lyapunov exponents of $p$ as given in the Oseledets Multiplicative Ergodic Theorem.
\end{thm}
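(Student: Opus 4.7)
The plan is to establish the identity by proving the two inequalities separately, following the classical strategy going back to Margulis, Ruelle and Pesin. The upper bound $h_\mu(f)\le\int\chi^+d\mu$, known as the Ruelle inequality, holds for every $f$-invariant Borel probability measure (not just smooth ones), while the reverse inequality uses crucially that $\mu$ is absolutely continuous with respect to the volume form.

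For the Ruelle inequality, I would fix a small $\varepsilon>0$ and for each $n$ cover $X$ by a finite collection of geodesic balls of radius $\varepsilon$. Using the Oseledets decomposition, the image $Df^n_p$ of a unit ball in $T_pX$ is comparable to an ellipsoid whose axes in the $i$-th Oseledets subspace $H_i(p)$ have length $\sim e^{n\chi_i(p)}$. Thus $f^n$ maps a ball of radius $\varepsilon$ around $p$ into a set that can be covered by roughly $\prod_{\chi_i(p)>0}e^{n\chi_i(p)}=e^{n\chi^+(p)}$ balls of radius $\varepsilon$. Taking a refining sequence of partitions and applying Shannon--McMillan--Breiman together with the subadditivity of entropy with respect to refinements yields $h_\mu(f,\xi)\le \int\chi^+d\mu+o(1)$ as the diameter of $\xi$ shrinks, which gives the Ruelle inequality after a standard approximation argument.

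For the lower bound, I would invoke Pesin's stable manifold theorem (Theorem \ref{thm:Pesin}, applied to $f^{-1}$) to produce, for $\mu$-a.e.\ $p$, a local unstable manifold $W^u_{\mathrm{loc}}(p)$ tangent to the direct sum of Oseledets subspaces with positive exponent. The key input that uses smoothness of $\mu$ is Pesin's absolute continuity theorem: the holonomy along the unstable foliation is absolutely continuous, so the conditional measures $\mu_p^u$ of $\mu$ on $W^u_{\mathrm{loc}}(p)$ are absolutely continuous with respect to the induced Riemannian volume on $W^u_{\mathrm{loc}}(p)$. Next, one constructs an increasing sequence of measurable partitions $\eta_n$ subordinate to the unstable foliation (in the sense of Ledrappier--Strelcyn), so that $f\eta_n\le \eta_n$ and $\bigvee_n f^{-n}\eta_n$ generates the Borel $\sigma$-algebra modulo $\mu$. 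The Rokhlin entropy formula then identifies $h_\mu(f)$ with the conditional entropy $H_\mu(f^{-1}\eta\mid\eta)$ of the partition relative to its refinement under $f^{-1}$.

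The final step is to compute this conditional entropy. Because $f$ expands $W^u$ by a Jacobian equal to $\exp(\chi^+)$ and the conditional measures along $W^u$ are absolutely continuous, the change-of-variables formula for the conditional densities gives
\[
H_\mu(f^{-1}\eta\mid\eta)=\int\log\mathrm{Jac}(Df|_{E^u})\,d\mu=\int\chi^+\,d\mu,
\]
where the last equality uses Oseledets's theorem. Combined with the Ruelle inequality this proves the formula. The main obstacle in this outline is Pesin's absolute continuity theorem: the unstable foliation in the nonuniformly hyperbolic setting is only measurable, not continuous, and showing that its holonomy preserves Lebesgue null sets requires a delicate estimate using Pesin blocks and graph transform techniques. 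Since this is a well-documented result (see e.g.\ \cite{bar}), in the appendix I would state it as a black box and focus the exposition on how the two inequalities fit together.
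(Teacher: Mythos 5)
The paper does not prove Theorem \ref{pef}; it is stated in Appendix A as a classical result and referred wholesale to the literature (Katok--Hasselblatt \cite{Katok} and Barreira--Pesin \cite{bar}), so there is no in-paper proof to compare against. Your outline is a correct summary of the standard two-sided argument: Ruelle's inequality for the upper bound, and the Ledrappier--Strelcyn/Pesin argument (subordinate measurable partitions on the unstable lamination, absolute continuity of conditional measures when $\mu$ is smooth, and the change-of-variables identity $\int\log\mathrm{Jac}(Df|_{E^u})\,d\mu=\int\chi^+\,d\mu$) for the lower bound. You correctly isolate absolute continuity of the unstable holonomies as the place where smoothness of $\mu$ enters; this is indeed the crux, and it is appropriate to treat it as a black box.

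Two small inaccuracies worth flagging. First, in the Ruelle inequality sketch, the Shannon--McMillan--Breiman theorem is not the tool one actually uses; the standard argument bounds $H_\mu(\bigvee_{k=0}^{n-1}f^{-k}\xi)$ directly by a counting estimate for how many elements of $\xi$ can meet $f^n$ of a single element, and this requires no pointwise SMB-type convergence. Second, the heuristic ``the image of a ball of radius $\varepsilon$ is covered by $e^{n\chi^+(p)}$ $\varepsilon$-balls'' only holds uniformly on Pesin blocks, since the Oseledets asymptotics are non-uniform; the standard proof first restricts to a Pesin set of measure $1-\delta$ (using Lusin/Egorov), runs the covering estimate there with controlled constants, and then lets $\delta\to0$. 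You should state this regularization step explicitly, since without it the covering estimate does not literally hold pointwise. With those corrections the outline is sound and matches the argument in the references the paper cites.
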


\section{Floer cohomology}\label{apd:fl}

In the section we give a brief review of Lagrangian Floer cohomology. The definition of Floer cohomology here is essentially Floer's original one~\cite{Fl}. For our purpose in the present paper, we work with ungraded groups.

Let $(\overline{M},d\bar{\lambda})$ be a $2n$-dimensional exact symplectic manifold with contact-type boundary, and let $(\overline{M},d\bar{\lambda})\cup (\partial \overline{M}\times [0,\infty),d\lambda=d(e^r\bar{\lambda}))$ be
its symplectization. In the following we denote $$M_R:=\overline{M}\cup \partial \overline{M}\times [0,R].$$ Let $(S_1,S_2)$ be a pair of connected compact exact Lagrangian submanifolds of $\overline{M}$ with $\lambda |_{S_i}=df_i, i=1,2$, where $f_i\in C^\infty(S_i,\mathbb{R})$. We also ask that $S_i\cap\partial \overline{M}=\emptyset$, $i=1,2$. 
The action functional on the path space
$$\mathcal{P}(S_1,S_2)=\big\{\gamma:[0,1]\to M|\gamma(0)\in S_1,\gamma(1)\in S_2\big\}$$
is defined as
$$\mathcal{A}_{S_1,S_2}(\gamma)=\int\gamma^*\lambda+f_1(\gamma(0))-f_2(\gamma(1)).$$

Clearly, the critical points of $\mathcal{A}_{S_1,S_2}$ are constant paths $\gamma_x$ at the intersection points $x$ of $S_1$ and $S_2$.
If $S_1$ and $S_2$ intersect transversely, the Floer cohomology $\HF(S_1,S_2)$ is well defined by the standard argument of transversality and gluing, see~\cite{Oh}. 
In general, to define Lagrangian Floer cohomology of $S_1$ and $S_2$ one needs to consider Hamiltonian perturbations to achieve transversality. Let us give a quick review of this construction. Let $H\in \mathcal{H}=C_c^\infty([0,1]\times M,\mathbb{R})$. For every $\gamma\in \mathcal{P}(S_1,S_2)$, the Hamiltonian action of $\gamma$ is
$$\mathcal{A}_{S_1,S_2}^H(\gamma)=\int\gamma^*\lambda-\int^1_0H(t,\gamma(t))dt+f_1(\gamma(0))-f_2(\gamma(1)).$$
Denote by $\mathcal{P}_H(S_1,S_2)\subset \mathcal{P}(S_1,S_2)$ the critical points of this functional, which are the paths of the Hamiltonian flow $\phi_H^t$ with endpoints in $S_i$, i.e.,  $\gamma:[0,1]\to M$ such that $\dot{\gamma}(t)=X_H(t,\gamma(t))$, $\gamma(0)\in S_1$ and $\gamma(1)\in S_2$. So there is a one-to-one correspondence between $\mathcal{P}_H(S_1,S_2)$ and $\phi^1_H(S_1)\cap S_2$. For generic $H\in\mathcal{H}^{reg}\subseteq\mathcal{H}$, $\phi^1_H(S_1)$ and $S_2$ intersect transversely. The Floer cochain $CF(S_1,S_2;H)$ is the vector space over $\mathbb{Z}/2$ with a base given by these intersection points. Denote by $\mathcal{J}$ the set of one-parameter families $(J_t)_{t\in\R}$ of $d\lambda$-compatible almost complex structures on $M$ which are of contact type on $\partial \overline{M}\times (-\epsilon,\infty)$ for some $\epsilon>0$, i.e., $dr\circ J_t=-\lambda$ with coordinate $r\in (-\epsilon,\infty)$. For $J\in \mathcal{J}$, consider the maps $u:\mathbb{R}\times [0,1]\to M$ which solves the Floer equation
\begin{equation}\label{e:CR}
	\partial_su+J(t,u)\big(\partial_t u-X_H(t,u)\big)=0
\end{equation}
subject to the boundary conditions
\[u(s,0)\in S_1\;\hbox{and}\;u(s,1)\in S_2,\quad s\in\mathbb{R},
\]
\[
\lim\limits_{s\to+\infty}u(s,t)=x,\qquad \lim\limits_{s\to+\infty}u(s,t)=y,
\]
and the finite energy condition
\[
E(u)=\frac{1}{2}\int^1_0\int^1_0\big|\partial_su\big|^2+\big|\partial_tu-X_H(u)\big|^2dsdt<\infty.
\]

Denote by $\widehat{\mathcal{M}}(x,y;H,J)$ the space of the above maps $u$. There is a natural $\mathbb{R}$-translation on $\widehat{\mathcal M}(x,y;H,J)$ in $s$-direction, and its quotient space is denoted by $\mathcal{M}(x,y;$ $H,J)$. Solutions of (\ref{e:CR}) can be thought as negative gradient flow lines for $\mathcal{A}_{S_1,S_2}^H$ in an $L^2$-metric on $\mathcal{P}(S_1,S_2)$. For each $u\in \widehat{\mathcal{M}}(x,y;H,J)$ we linearize (\ref{e:CR}) and obtain a Fredholm operator $D_{H,J,u}$ in suitable Sobolev spaces.
Under the assumptions that $\langle [\omega],\pi_2(M,S_i)\rangle=0,i=1,2$, there is a dense subspace $\mathcal{J}^{reg}\subset \mathcal{J}$ of almost complex structures such that $D_{H,J,u}$ are onto for all $u\in \widehat{\mathcal{M}}(x,y;H,J)$, see~\cite{Fl}. Hence the spaces
$\widehat{\mathcal{M}}(x,y;H,J)$, as well as $\mathcal{M}(x,y;H,J)$, are smooth manifolds.

In the setting of our present paper, these topological assumptions are met. Moreover, we have the following 
 \begin{lem}[{\cite[Lem. 5.5]{KS}}]\label{lem:compact}
Let $\Sigma$ be an open subset of $\R\times [0,1]$, and $(J_{s,t})$ a smooth family of $d\lambda$-compatible almost
complex structures parametrized by $(s,t)\in\Sigma$ which is of contact type on $\partial \overline{M}\times (-\epsilon,\infty)$ for some $\epsilon>0$.  Let $u=(u_1,u_2):\Sigma\to\partial \overline{M} \times (-\epsilon,\infty) $ be a smooth map which satisfies
\[
\partial u/\partial s+J_{s,t}(u)\partial u/\partial t=0.
\]
Then $u_1$ has no local maxima. 
\end{lem}
Suppose now that $\cup_t{\rm Supp}(H_t)\subset M_R$. 
From Lemma~\ref{lem:compact}, we see that each solution $u$ to (\ref{e:CR}) has image in a compact set of $M_R\setminus\partial M_R$. Then for generic $J\in\mathcal{J}^{reg}$, we can define the Floer differential $d_{H,J}:\ \CF(S_1,S_2;H)\to \CF(S_1,S_2;H)$ by counting isolated points in $\mathcal{M}(x,y;H,J)$ mod $2$, i.e.,
$$d_{H,J}(y)=\sum_{x}\sharp_2\mathcal{M}(x,y;H,J)\cdot x.$$
This map has square zero, i.e., $d_{H,J}^2=0$, and hence $\CF(S_1,S_2;H)$ is a complex over  $\mathbb{Z}/2$. $\HF(S_1,S_2;H,J)$ is defined to be its cohomology $\mathrm{Ker}(d_{H,J})/\mathrm{Im}(d_{H,J})$.  For two generic pairs $(H^0,J^0)$ and $(H^1,J^1)$ with $\cup_{t\in[0,1]}({\rm supp}(H^0_t)\cup {\rm supp}(H^1_t))\subset M_R$,  we choose a family of Hamiltonians $(H_{s,t})_{(s,t)\in\R\times[0,1]}$ connecting $H^0$ and $H^1$ with $\cup_{s,t}{\rm supp}(H_{s,t})\subset M_R$ and a family of $d\lambda$-compatible almost complex structures $(J_{s,t})_{(s,t)\in\R\times[0,1]}$ connecting $J^0$ and $J^1$ of contact type on $\partial\overline{M}\times (0,\infty)$ with the property that there is a large number $A>0$ such that $(H_{s,t},J_{s,t})=(H^0,J^0)$ for $s<-A$ and  $(H_{s,t},J_{s,t})=(H^1,J^1)$ for $s>A$. Consider the $s$-dependent Floer equation
\begin{equation}\label{e:sCR}
	\partial_su+J(s,t,u)\big(\partial_t u-X_H(s,t,u)\big)=0
\end{equation}
with boundary conditions $u(\R,0)\subset S_1,\;u(\R,1)\subset S_2$ and finite energy. Then by Lemma~\ref{lem:compact} again, each solution $u$ to $(\ref{e:sCR})$ has image in a compact set of  $ M_R\setminus \partial M_R$. Consequently, using the usual compactness and gluing arguments about the moduli spaces used to define chain maps between two Floer chains, one can show that Floer cohomology $\HF(S_1,S_2;H^0,J^0)$ is isomorphic to $\HF(S_1,S_2;H^1,J^1)$, see~\cite{SZ,Se1}. So we  define Lagrangian Floer cohomology of $S_1$ and $S_2$ as 
\[
\HF^M(S_1,S_2)=\HF(S_1,S_2;H,J)
\]
for any choice of $H\in C_c^\infty([0,1]\times M,\mathbb{R})$ such that $\phi^1_H(S_1)$ intersects $S_2$ transversely and any $J\in \mathcal{J}^{reg}$.

\end{document}